\newcommand{\mb}[1]{\mbox{\boldmath $#1$}}
\newcommand{\mbs}[1]{{\mbox{\boldmath \scriptsize{$#1$}}}}
\def\texitem#1{\par\smallskip\noindent\hangindent 25pt
               \hbox to 25pt {\hss #1 ~}\ignorespaces}
\tikzstyle{block} = [rectangle, draw, fill=blue!20,
\tikzstyle{line} = [draw, -latex']
\DeclarePairedDelimiter{\ceil}{\lceil}{\rceil}
\begin{document}
\title{Tight Probability Bounds with Pairwise Independence\thanks{Submitted: March 2021, Revised: April 2022.
\funding{The research of the second author was partly supported by the MOE Academic Research Fund Tier 2 grant MOE2019-T2-2-138, ``Enhancing Robustness of Networks to Dependence via Optimization''.}}}

\date{Submitted: March 2021, Revised: April 2022}

\author{Arjun Kodagehalli Ramachandra\thanks{Engineering Systems and Design, Singapore University of Technology and Design, 8 Somapah Road, Singapore 487372. \email{arjun\_ramachandra@sutd.edu.sg}}. \and Karthik Natarajan\thanks{Engineering Systems and Design, Singapore University of Technology and Design, 8 Somapah Road, Singapore 487372. \email{karthik\_natarajan@sutd.edu.sg}}}

\nolinenumbers
\maketitle

\begin{abstract}
While useful probability bounds for $n$ pairwise independent Bernoulli random variables adding up to at least an integer $k$ have been proposed in the literature, none of these bounds are tight in general. In this paper, we provide several results in this direction. Firstly, when $k = 1$, the tightest upper bound on the probability of the union of $n$ pairwise independent events is provided in closed-form for any input marginal probability vector $\mb{p} \in [0,1]^n$.  To prove the result, we show the existence of a positively correlated Bernoulli random vector with transformed bivariate probabilities, which is of independent interest. Building on this, we show that the ratio of the Boole union bound and the tight pairwise independent bound is upper bounded by $4/3$ and that the ratio is attained. Applications of the result in correlation gap analysis and distributionally robust bottleneck optimization are discussed. The result is extended to find the tightest lower bound on the probability of the intersection of $n$ pairwise independent events.  Secondly, for any $k \geq 2$ and input marginal probability vector $\mb{p} \in [0,1]^n$, new upper bounds are derived by exploiting ordering of probabilities.  Numerical examples are provided to illustrate when the bounds provide improvement over existing bounds. Lastly, we identify specific instances when the existing and the new bounds are tight, for example, with identical marginal probabilities.
\end{abstract}

\begin{keywords}
pairwise independence, probability bounds, linear programming
\end{keywords}

\begin{MSCcodes}
60-08, 90C05
\end{MSCcodes}

\section{Introduction}  \label{sec:intro}
Probability bounds for sums of Bernoulli random variables have been extensively studied by researchers in various communities including probability and statistics, computer science, combinatorics and optimization. In this paper, our focus is on pairwise independent Bernoulli random variables. It is well known that while mutually independent random variables are pairwise independent, the reverse is not true. Feller \cite{feller} attributes Bernstein \cite{bernstein1946} with identifying one of the earliest examples of $n = 3$ pairwise independent random variables that are not mutually independent. For general $n$, constructions of pairwise independent Bernoulli random variables can be found in the works of Geisser and Mantel \cite{geisser1962}, Karloff and Mansour \cite{karloffmansour1994}, Koller and Meggido \cite{koller}, pairwise independent discrete random variables in Feller \cite{feller1959}, Lancaster \cite{lancaster1965}, Joffe \cite{joffe1974}, O'Brien \cite{brien1980} and pairwise independent normal random variables in Geisser and Mantel \cite{geisser1962}. One of the motivations for studying constructions of pairwise independent random variables particularly in the computer science community is that the joint distribution can have a low cardinality support (polynomial in the number of random variables) in comparison to mutually independent random variables (exponential in the number of random variables). The reader is referred to Lancaster \cite{lancaster1965} and more recent papers of Babai \cite{Babai} and Gavinsky and Pudl\'{a}k \cite{Gavinsky} who provide precise lower bounds on the entropy of the joint distribution of pairwise independent random variables that only grow logarithmically with the number of random variables. The low cardinality of such distributions have important ramifications in the efficient derandomization of algorithms for NP-hard combinatorial optimization problems (see the review article of Luby and Widgerson \cite{luby} and the references therein for results on pairwise independent and more generally $t$-wise independent random variables).

In this paper, we are interested in the problem of computing probability bounds for the sum of pairwise independent Bernoulli random variables adding up to at least an integer $k$. Given an integer $n \geq 2$, denote by $[n]=\{1,2,\ldots,n\}$ and by $K_n=\{(i,j): 1 \leq i < j \leq n\}$ (it can be viewed as a complete graph on $n$ nodes). Given integers $i < j$, let $[i,j] = \{i,i+1,\ldots,j-1,j\}$. Consider a Bernoulli random vector $\tilde{\mb{c}} = (\tilde{c}_{1},\ldots,\tilde{c}_n)$ with marginal probabilities given by $p_i = \mathbb{P}(\tilde{c}_{i} = 1)$ for $i \in [n]$. Denote by $\mb{p} = (p_1,\ldots,p_n) \in [0,1]^n$, the univariate marginal vector and by $\Theta(\{0,1\}^n)$, the set of all probability distributions supported on $\{0,1\}^n$. Consider the set of joint probability distributions of Bernoulli random variables consistent with the given marginal probabilities and pairwise independence:
\begin{equation*}
\begin{array}{rlll}
\displaystyle \Theta(\mb{p},p_ip_j; (i,j) \in K_n) = \Big\{\theta \in \Theta(\{0,1\}^n) \ \Big{|} \ \mathbb{P}_\theta\left(\tilde{c}_i = 1
    \right) = p_i, \forall
    i \in [n],\;\;  \\
 \displaystyle   \mathbb{P}_\theta\left(\tilde {c}_i = 1, \tilde {c}_j= 1
    \right) = p_{i}p_{j}, \;\forall (i,j) \in K_n\Big\}.
 \end{array}
 \end{equation*}
This set of distributions is nonempty for any $\mb{p} \in [0,1]^n$, since the distribution of mutually independent random variables lies in the set. Our problem of interest is to compute the maximum probability that $n$ random variables adds up to at least  an integer $k \in [n]$ over all distributions in the set. Denote this tightest upper bound by $\overline{P}(n,k,\mb{p})$ (observe that the bivariate probabilities here are simply given by the product of the univariate probabilities). Then,
\begin{equation}\label{eq:p-indprobdefn}
\begin{array}{lll}
\displaystyle \overline{P}(n,k,\mb{p})  =  \displaystyle \max_{\theta \in \Theta(\mbs{p},p_ip_j; (i,j) \in K_n)} \mathbb{P}_\theta\left( \sum_{i \in [n]} \tilde{c}_{i} \geq k\right).
\end{array}
\end{equation}
Two useful bounds that have been proposed for this problem are discussed next:
\texitem{(a)} Chebyshev \cite{chebyshev1867} bound: The one-sided version of the Chebyshev tail probability bound uses the first and second moments of the random variables. Since the Bernoulli random variables are assumed to be pairwise independent or equivalently uncorrelated, the variance of the sum is given by:
$$\displaystyle \mbox{Variance}\left( \sum_{i \in [n]} \tilde{c}_{i}\right) = \sum_{i \in [n]} p_i(1-p_i).$$
Applying the Chebyshev bound gives:
\begin{equation}\label{cheby}
\begin{array}{lll}
\overline{P}(n,k,\mb{p}) \leq \begin{cases}
 1,   & k <  {\displaystyle \sum_{i \in [n]} p_{i}}, \\
  \displaystyle \frac{\sum_{i \in [n]} p_{i}(1-p_{i})}{\sum_{i \in [n]} p_{i}(1-p_{i}) +( k-  \sum_{i \in [n]} p_{i} )^2} ,&{ \displaystyle \sum_{i \in [n]} p_{i}} \leq k \leq n.
  \end{cases}
\end{array}
\end{equation}

\texitem{(b)} Schmidt, Siegel and Srinivasan \cite{shrinivasan1995} bound: The Schmidt, Siegel and Srinivasan bound is derived by bounding the tail probability using the moments of multilinear polynomials. This is in contrast to the Chernoff-Hoeffding bound (see Chernoff \cite{chernoff}, Hoeffding \cite{hoeffding}) which bounds the tail probability of the sum of independent random variables using the moment generating function. A multilinear polynomial of degree $j$ in $n$ variables is defined as:
$$\displaystyle S_{j}(\mb{c})= \sum_{1\leq i_{1}< i_{2}< \ldots < i_{j}\leq n} {c}_{i_{1}} {c}_{i_{2}} \ldots {c}_{i_{j}}.$$
At the crux of the analysis in \cite{shrinivasan1995} is the observation that all the higher moments of the sum of Bernoulli random variables can be generated using linear combinations of the expected values of multilinear polynomials of the random variables. The construction of the bound makes use of the equality:
\begin{equation}
\begin{array}{lll}   \label{binomial}
\displaystyle \binom {\sum_{i\in [n]}{c}_i}{j}   =  \displaystyle S_{j}(\mb{c}), & \forall \mb{c} \in \{0,1\}^n, \forall j \in [0,\sum_{i\in [n]}{c}_i],
\end{array}
\end{equation}
where $S_0(\mb{c}) = 1$ and $\binom {r}{s} = r!/(s!(r-s)!)$ for any pair of integers $r \geq s \geq 0$. The bound derived in Schmidt et al. \cite{shrinivasan1995} (see Theorem 7, part (II) on page 239) for pairwise independent random variables is\footnote{While the statement in the theorem in \cite{shrinivasan1995} is provided for $k > \sum_{i} p_i$, it is straightforward to see that their analysis would lead to the form provided here for general $k$.}:
\begin{equation}
\begin{array}{lll}   \label{SSS}
\overline{P}(n,k,\mb{p}) \leq
 \min\bigg(1, \displaystyle\frac{\sum_{i \in [n]} p_{i}}{k} ,\frac{\sum_{(i,j) \in K_n}p_{i}p_{j}}{\binom{k}{2}}\bigg).
\end{array}
\end{equation}

While both the Chebyshev bound in \eqref{cheby} and the Schmidt, Siegel and Srinivasan bound in \eqref{SSS} are useful, neither of them are tight for general values of $n$, $k$ and $\mb{p} \in [0,1]^n$. In this paper, we work towards tightening these bounds for pairwise independent random variables and identifying instances when the bounds are tight.

\subsection{Other related bounds}
Consider the set of joint distributions of Bernoulli random variables consistent with the marginal probability vector $\mb{p} \in [0,1]^n$ and general bivariate probabilities given by $p_{ij} = \mathbb{P}(\tilde{c}_{i} = 1,\tilde{c}_{j} = 1)$ for all $(i,j) \in K_n$:
\begin{equation*}
\begin{array}{rlll}
\displaystyle \Theta(\mb{p},p_{ij}; (i,j) \in K_n) = \Big\{\theta \in \Theta(\{0,1\}^n) \ \Big{|} \ \mathbb{P}_\theta\left(\tilde{c}_i = 1
    \right) = p_i, \forall
    i \in [n],\;\;  \\
 \displaystyle   \mathbb{P}_\theta\left(\tilde {c}_i = 1, \tilde {c}_j= 1
    \right) = p_{ij}, \;\forall (i,j) \in K_n\Big\}.
\end{array}
\end{equation*}
Unlike the pairwise independent case, verifying if this set of distributions is nonempty is already known to be a NP-complete problem (see Pitowsky \cite{pitowsky91}). The tightest upper bound on the tail probability over all distributions in this set is given by:
\begin{equation*}
\begin{array}{lll}
\displaystyle \max_{\theta \in \Theta(\mbs{p},p_{ij}; (i,j) \in K_n)} \mathbb{P}_\theta\left( \sum_{i \in [n]} \tilde{c}_{i} \geq k\right),
\end{array}
\end{equation*}
where the bound is set to $-\infty$ if the set of feasible distributions is empty. The bound is given by the optimal value of the linear program (see Hailperin \cite{hailperin}):
\begin{equation} \label{eq:generalprimal}
\begin{array}{rlllll}
\displaystyle \max & \displaystyle \sum_{\mbs{c} \in \{0,1\}^n: \sum_{t} {c}_{t} \geq k} \theta(\mb{c})\\
 \mbox{s.t}  &\displaystyle  \sum_{\mbs{c} \in \{0,1\}^n} \theta(\mb{c})= 1, \\
 & \displaystyle \sum_{\mbs{c} \in \{0,1\}^n: c_i = 1} \theta(\mb{c})= p_i , & \forall i \in [n],\\
 & \displaystyle \sum_{\mbs{c} \in \{0,1\}^n: c_i = 1, c_j = 1}\theta(\mb{c})= p_{ij}, & \forall (i,j) \in K_n, \\
  & \displaystyle \theta(\mb{c}) \geq 0, & \forall \mb{c} \in \{0,1\}^n,
\end{array}
\end{equation}
where the decision variables are the joint probabilities $\theta(\mb{c}) = \mathbb{P}(\tilde{\mb{c}} = \mb{c})$ for all $\mb{c} \in \{0,1\}^n$. The number of decision variables in the formulation grows exponentially in the number of random variables $n$. The dual linear program is given by:
\begin{equation} \label{eq:generaldual}
\begin{array}{rlllll}
\displaystyle \min & \displaystyle \sum_{(i,j) \in K_n} \lambda_{ij}p_{ij}+ \displaystyle \sum_{i \in [n]} \lambda_{i}p_{i} +\lambda_{0} \\
 \mbox{s.t}  &\displaystyle \sum_{(i,j) \in K_n} \lambda_{ij}{c}_{i}{c}_{j}+ \displaystyle \sum_{i \in [n]} \lambda_{i}{c}_{i} +\lambda_{0} \geq 0, & \forall \mb{c} \in \{0,1\}^n,\\
 &\displaystyle \sum_{(i,j) \in K_n} \lambda_{ij}{c}_{i}{c}_{j}+ \displaystyle \sum_{i \in [n]} \lambda_{i}{c}_{i} +\lambda_{0} \geq 1, & \forall \mb{c} \in \{0,1\}^n: \sum_{t} {c}_{t} \geq k.
\end{array}
\end{equation}
The dual linear program in \eqref{eq:generaldual} has a polynomial number of decision variables but an exponential number of constraints. This linear program is always feasible (simply set $\lambda_0=1$ and remaining dual variables to be zero) and strong duality thus holds. Given the large size of the primal and dual linear programs that need to be solved, two main approaches have been studied in the literature:
\texitem{(a)} The first approach is to find closed-form bounds by generating simple dual feasible solutions (see Kounias \cite{kounias1968}, Kounias and Marin \cite{kounias1976}, Sathe et al. \cite{sathe1980}, M\'{o}ri and Sz\'{e}kely \cite{mori1985}, Dawson and Sankoff \cite{dawson1967}, Galambos \cite{galambos1975,galambos1977}, de Caen \cite{decaen1997}, Kuai et al. \cite{tahakara2000}, Dohmen and Tittmann \cite{dohmen2007} and related graph-based bounds in Hunter \cite{hunter1976}, Worsley \cite{worsley1982}, Veneziani \cite{veneziani2008union}, Vizv\'{a}ri \cite{vizvari2007}). These bounds have shown to be tight in specific instances (in Section \ref{subsec:hunter} we discuss some of these instances).
\texitem{(b)} The second approach is to reduce the size of the linear programs used and solve them numerically. As the number of random variables $n$ increase, the linear programs quickly become intractable and thus many papers adopting this approach, aggregate the primal decision variables, thus obtaining weaker bounds as a trade-off for the reduced size. Formulations of linear programs using partially or fully aggregated univariate, bivariate or $m$-variate information for $2 \leq m < n$ have been proposed in Kwerel \cite{kwerelstringent1975}, Platz \cite{platz1985}, Pr\'{e}kopa \cite{prekopa1988,prekopa1990}, Boros and Pr\'{e}kopa \cite{boros1989}, Pr\'{e}kopa and Gao \cite{prekopa2005}, Qiu et al. \cite{qiu2016}, Yang et al. \cite{yang2016}, Yoda and Pr\'{e}kopa \cite{yoda2016}). Techniques to solve the dual formulation have been studied in Boros et al. \cite{boros2014}.

Using the second approach, in some cases, closed-form bounds have been derived as solutions of the aggregated linear programs. One such bound which is of relevance to this paper is developed in Boros and Pr\'{e}kopa \cite{boros1989} when the first and second binomial moments of an integer random variable supported on $[0,n]$ are known. They computed the tightest upper bound on $ \mathbb{P}(\tilde{\xi} \geq k)$ by considering all distributions $\omega$ of an integer random variable $\tilde{\xi}$ supported on $[0,n]$ given by the set:
\begin{equation*}
\begin{array}{lll}
\displaystyle \left\{\omega([0,n]) \ \Big{|} \ \mathbb{E}_{\omega}\left[\displaystyle \binom {\;\tilde{\xi}\;}{j}
    \right] = S_{j}, \; j=1,2\right\}.
\end{array}
\end{equation*}
Setting $\tilde{\xi} = \sum_i \tilde{c}_i$ with $S_{1} = \mathbb{E}[S_{1}(\tilde{\mb{c}})]$ and $S_{2} = \mathbb{E}[S_{2}(\tilde{\mb{c}})]$ gives a closed-form upper bound as follows:
\begin{equation} \label{BorosPrekopa}
\begin{array}{lll}
\displaystyle \mathbb{P}\left( \sum_{i \in [n]} \tilde{c}_{i} \geq k\right)  \leq \begin{cases}
 1,   &  k< \displaystyle\frac{(n-1)S_{1}-2S_{2}}{n-S_{1}},\\
\displaystyle \frac{(k+n-1)S_{1}-2S_{2}}{kn} ,&\displaystyle \frac{(n-1)S_{1}-2S_{2}}{n-S_{1}} \leq k < 1+\displaystyle \frac{2S_{2}}{S_{1}}, \\
\displaystyle \frac{(i-1)(i-2S_{1})+2S_{2}}{(k-i)^2+(k-i)} ,& k \geq 1+\displaystyle\frac{2S_{2}}{S_{1}},
\end{cases}
\end{array}
\end{equation}
where $i=\lceil({(k-1)S_{1}-2S_{2}})/({k-S_{1}})\rceil$ and the ceiling function $\lceil x \rceil$ maps $x$ to the smallest integer greater than or equal to $x$. Similar to the Chebyshev bound and the Schmidt, Siegel and Srinivasan bound, the Boros and Pr\'{e}kopa bound in \eqref{BorosPrekopa} is not generally tight since it uses aggregated moment information, rather than the specific marginal probabilities. Another useful upper bound derived under weaker assumptions is the Boole union bound \cite{boole1854} (see also Fr\'{e}chet \cite{frechet1935}) for $k = 1$. This bound is valid even with arbitrary dependence among the Bernoulli random variables. Let $\Theta(\mb{p})$ denotes the set of joint distributions supported on $\{0,1\}^n$ consistent with the univariate marginal probability vector $\mb{p} \in [0,1]^n$. The Boole union bound is given as:
\begin{equation}\label{frechet}
\begin{array}{rlllll}
\displaystyle {\overline{P}}_u(n,1,\mb{p})  =  \displaystyle \max_{\theta \in \Theta(\mbs{p})} \mathbb{P}_\theta\left( \sum_{i \in [n]} \tilde{c}_{i} \geq 1\right)   = \displaystyle \min\left(\sum_{i \in [n]} p_{i},1\right).
\end{array}
\end{equation}
Clearly, $\overline{P}(n,1,\mb{p}) \leq {\overline{P}}_u(n,1,\mb{p})$. Extensions of this bound for $k \geq 2$ is provided in R\"{u}ger \cite{ruger1978}.

\subsection{Contributions and structure}
This brings us to the key contributions and the structure of the current paper:
\texitem{(a)} In Section \ref{sec:unionbound}, we establish (see Lemma \ref{lem:bivarfeas}) that a positively correlated Bernoulli random vector $\tilde{\mb{c}}$ with the univariate probability vector $\mb{p} \in [0,1]^n$ and transformed bivariate probabilities $p_ip_j/p$ where $\max_i p_i \leq p \leq 1$, always exists. The lemma helps us compute the tightest upper bound on the probability of the union of $n$ pairwise independent events and is of independent interest. By a simple transformation, the results from Lemma \ref{lem:bivarfeas} are extended to show the existence of an alternate positively correlated Bernoulli random vector (see Corollary \ref{cor:bivarfeasvariant}). Feasibility is not guaranteed for arbitrary correlation structures with Bernoulli random vectors and hence these two results provide useful sufficient conditions.
\texitem{(b)}
We then provide the tightest upper bound on the probability on the union of $n$ pairwise independent events ($k=1$) in closed-form (see Theorem \ref{thm:unionbound}). The contributions of Theorem \ref{thm:unionbound} lie in:
\begin{enumerate}
\item
Establishing that when the random variables are pairwise independent, for any given marginal vector $\mb{p} \in [0,1]^n$,  the upper bound proposed in Kounias \cite{kounias1968}, Hunter \cite{hunter1976} and Worsley \cite{worsley1982} is tight. These bounds were initially developed for the sum of dependent Bernoulli random variables with arbitrary bivariate probabilities (using tree structures from graph theory) and are not tight in general (see Example \ref{example:hw} in Section \ref{subsec:hunter}). Interestingly for pairwise independent random variables, we prove that the bound is tight by using Lemma \ref{lem:bivarfeas}.
\item Providing an explicit construction of an extremal distribution (not unique) that attains this bound (see Table \ref{table:hwprobdist}).
\item Proving that the ratio of the Boole union bound and the pairwise independent bound is upper bounded by $4/3$ and that this is attained (see Proposition \ref{prop:25}). Applications of the result in correlation gap analysis and distributionally robust bottleneck combinatorial optimization are discussed (see examples \ref{ex:correlationanalysis} and \ref{ex:bottleneck}).
\item
Deriving the tightest lower bound on the probability of the intersection of $n$ pairwise independent events ($k=n$) in closed-form (see Corollary \ref{cor:intersection}).
\end{enumerate}
\texitem{(c)} In Section \ref{sec:nonidenticalnewbounds}, we focus on  $k \geq 2$ and present new bounds exploiting the ordering of probabilities (see Theorem \ref{thm:nonidentical}). These ordered bounds improve on the closed-form bounds discussed in Section \ref{sec:intro} and numerical examples are provided to illustrate this result.
\texitem{(d)} In Section \ref{sec:tightinstances}, we provide instances where some of the existing bounds and the newly proposed ordered bounds are tight:
\begin{enumerate}
     \item First, we identify a special case when the existing closed-form bounds are tight. When the random variables are identically distributed, in Section \ref{subsec:idenBP}, we provide the tightest upper bound in closed-form  (see Theorem \ref{thm:idenBPtight}) for any $k \in [n]$. The proof is based on showing an equivalence with a linear programming formulation of an aggregated moment bound for which closed-form solutions have been derived by Boros and Pr\'{e}kopa \cite{boros1989}. While the expression of the tight closed-form bound is complicated in form in comparison with the Chebyshev bound in \eqref{cheby} and the Schmidt, Siegel and Srinivasan bound in \eqref{SSS}, it helps us identify conditions when the latter bounds are guaranteed to be tight (see Proposition \ref{prop:tighthom}).
     \item This result with identical marginals is further extended to show tightness for more general $t$-wise independent variables (see Corollary \ref{cor:twiseiden}). The tight bounds for $t \geq 4$ can be derived as the optimal solution to an aggregated linear program first proposed by Pr\'{e}kopa \cite{prekopa1990}.
     \item Next, when $n-1$ marginal probabilities are identical, Proposition \ref{prop:tightnonid} provides instances when the new ordered bounds are tight. Numerical examples are provided to illustrate this result.
\end{enumerate}
\texitem{(e)} We conclude in Section \ref{sec:conclusion} and identify some future research questions.

\section{Tight upper bound for $k = 1$} \label{sec:unionbound}
The goal of this section is to provide the tightest upper bound on the probability of the union of pairwise independent events. Towards this, we start by   generating a feasible solution to the dual linear program in (\ref{eq:generaldual}) with $k = 1$,  $p_{ij} = p_ip_j$ for all $(i,j) \in K_n$ and probabilities sorted in increasing value as $0 \leq p_{1} \leq p_{2} \leq \ldots \leq p_{n} \leq 1$. Consider the dual solution:
\begin{equation*}
\begin{array}{rlllll}
\lambda_{0}= 0, \; \lambda_{i}= 1 \; \forall i \in [n], \; \lambda_{in}= -1 \; \forall i \in [n-1]  \mbox{ and } \lambda_{ij} = 0 \mbox{ otherwise}.
\end{array}
\end{equation*}
The left hand side of the dual constraints in (\ref{eq:generaldual}) then simplifies to:
\begin{equation*}
\begin{array}{rlllll}
\displaystyle \sum_{(i,j) \in K_n} \lambda_{ij}{c}_{i}{c}_{j}+ \sum_{i \in [n]} \lambda_{i}{c}_{i} +\lambda_{0} & =&  \displaystyle -\sum_{i \in [n-1]} {c}_{i}c_n + \sum_{i \in [n]} {c}_{i} \\
& = & \displaystyle c_n + \sum_{i\in [n-1]} {c}_{i}(1-c_n).
\end{array}
\end{equation*}
To verify that this solution is dual feasible, observe that with all $c_i = 0$, $c_n + \sum_{i\in [n-1]} {c}_{i}(1-c_n) = 0$. When $c_n = 1$, regardless of the values of $c_1,\ldots,c_{n-1}$, we have $c_n + \sum_{i \in [n-1]} {c}_{i}(1-c_n) = 1$. Lastly, when $c_n = 0$ and at least one $c_{i} = 1$ for $i \in [n-1]$, we have $c_n + \sum_{i \in [n-1]} {c}_{i}(1-c_n) \geq 1$. This solution has an objective value of $\sum_{i \in [n]} p_{i}-p_{n}(\sum_{i \in [n-1]} p_{i})$. From weak duality and using the trivial upper bound of $1$, we have:
$$\displaystyle \overline{P}(n,1,\mb{p}) \leq \min\left(\sum_{i \in [n]} p_{i}-p_{n}\left(\sum_{i \in [n-1]} p_{i}\right),1\right).$$ Intuitively the first term in this expression is obtained using the probabilistic inequality:
\begin{equation*}
\begin{array}{rlllll}
\displaystyle \mathbb{P}\left( \sum_{i \in [n]} \tilde{c}_{i} \geq 1\right)  \leq \sum_{j \in [n-1]} \mathbb{P}\left( \tilde{c}_{j}=1, \tilde{c}_{n}=0\right)+\mathbb{P}\left( \tilde{c}_{n}=1\right),
\end{array}
\end{equation*} and is provided in the work of Kounias \cite{kounias1968}. The key result we show is that it is always possible to construct a pairwise independent distribution which attains the upper bound. The proof involves showing that the problem can be transformed to proving the existence of a distribution of a Bernoulli random vector $\tilde{\mb{c}}$ with univariate probabilities given by $\mathbb{P}(\tilde{c}_{i} = 1) = p_i$ and transformed bivariate probabilities given by $\mathbb{P}(\tilde{c}_{i} = 1,\tilde{c}_j=1)= p_ip_j/p_n$, where $p_n$ is the largest univariate probability.  In the following lemma, we prove a more general result on the existence of such a correlated Bernoulli random vector.

\begin{lemma}\label{lem:bivarfeas}
Given an arbitrary univariate probability vector $\mb{p} \in [0,1]^n$ and bivariate probabilities $p_ip_j/p$ for $(i,j) \in K_n$ where $\max_i p_i \leq p \leq 1$, a Bernoulli random vector consistent with the given univariate and bivariate probabilities always exists.
\end{lemma}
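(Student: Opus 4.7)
My plan is to give a direct constructive proof via a two-component mixture distribution, which is natural because the hypothesis $p \geq \max_i p_i$ is exactly what makes the construction well defined.

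First I would set $q_i := p_i/p$ for each $i \in [n]$. The condition $\max_i p_i \leq p$ ensures $q_i \in [0,1]$, so these are valid Bernoulli parameters. I would then define the joint law of $\tilde{\mathbf c}$ as a mixture: with probability $p$, draw the coordinates $\tilde c_1,\ldots,\tilde c_n$ as \emph{mutually independent} Bernoullis with $\mathbb P(\tilde c_i=1)=q_i$; with probability $1-p$, set $\tilde{\mathbf c}=\mathbf 0$. Concretely, if $\mu$ denotes the product Bernoulli law with parameters $q_1,\ldots,q_n$ on $\{0,1\}^n$ and $\delta_{\mathbf 0}$ denotes the Dirac mass at the origin, the candidate law is $\theta := p\,\mu + (1-p)\,\delta_{\mathbf 0}$, which is a valid probability distribution on $\{0,1\}^n$ because both components are probability measures and the mixing weights sum to one.

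Next, I would verify the two marginal constraints by direct computation. For each $i\in[n]$,
\begin{equation*}
\mathbb P_\theta(\tilde c_i=1) \;=\; p\cdot q_i \;+\; (1-p)\cdot 0 \;=\; p\cdot \frac{p_i}{p} \;=\; p_i,
\end{equation*}
and for each $(i,j)\in K_n$, using independence under $\mu$,
\begin{equation*}
\mathbb P_\theta(\tilde c_i=1,\tilde c_j=1) \;=\; p\cdot q_i q_j \;=\; p\cdot\frac{p_i p_j}{p^2} \;=\; \frac{p_i p_j}{p}.
\end{equation*}
This matches the required univariate and bivariate marginals, establishing feasibility.

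I would close with two brief remarks that justify why the hypothesis cannot be relaxed and explain the ``positive correlation'' comment in the paper: since $p\le 1$, one has $p_i p_j/p \ge p_i p_j$, so the constructed law is positively correlated relative to mutual independence; and since $p\ge p_i,p_j$, one has $p_i p_j/p \le \min(p_i,p_j)$, which is the obvious necessary condition for a bivariate Bernoulli probability. There is no real obstacle here: the only place where care is needed is the well-definedness of the $q_i$, which is exactly the content of the hypothesis $\max_i p_i \le p \le 1$. The entire argument is a one-paragraph mixture construction, and I would not need to invoke linear programming duality, a graphical construction, or the machinery developed for the main theorem — those tools are used downstream in Theorem~\ref{thm:unionbound} to convert this existence statement into the tight bound.
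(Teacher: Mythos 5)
Your proof is correct, and it is a streamlined version of what the paper does. The underlying idea is the same in both cases --- dilute an independent law by mixing it with a point mass at the origin --- but the execution differs in a way worth noting. The paper proceeds in two stages: it first reduces the general case $\max_i p_i \le p \le 1$ to the extreme case $p = p_n$ by writing $1/p$ as a convex combination of $1/p_n$ and $1$ and correspondingly mixing two distributions; it then constructs the $p = p_n$ distribution explicitly by placing mass $1-p_n$ on the all-zeros vector and, conditionally on $c_n = 1$, drawing the remaining coordinates as mutually independent Bernoullis with parameters $p_i/p_n$. Your single formula $\theta = p\,\mu + (1-p)\,\delta_{\mathbf 0}$, with $\mu$ the product law with parameters $q_i = p_i/p$, subsumes both stages at once: when $p = p_n$ your $\mu$ forces $c_n = 1$ almost surely (since $q_n = 1$), so your construction coincides exactly with the paper's extremal case, and for $p > p_n$ you build the required law directly rather than by interpolation. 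The marginal and bivariate verifications you give are complete, and your closing remarks correctly identify why the hypothesis $\max_i p_i \le p \le 1$ is exactly what is needed. What the paper's longer route buys is mainly expository: the explicit table and the reduction to $p = p_n$ set up the recursive construction of the extremal distribution in Theorem~\ref{thm:unionbound} (Figure~\ref{fig:unionconstruction}), whereas your argument, while cleaner as a standalone existence proof, would require the reader to re-derive the conditional structure when it is reused downstream. The only degenerate point, which neither you nor the paper addresses, is $p = 0$ (forcing all $p_i = 0$, where $p_ip_j/p$ is ill-defined); this is vacuous and not a gap.
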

\begin{proof}
 Sort the probabilities in increasing value as $0 \leq p_{1} \leq p_{2} \leq \ldots \leq p_{n} \leq 1$. We want to show that there always exists a distribution $\theta \in \displaystyle \Theta(\mb{p},p_{i}p_j/p; (i,j) \in K_n)$ such that:
\begin{equation} \label{eq:compslack3aa}
\begin{array}{rlll}
\displaystyle \sum_{\mbs{c} \in \{0,1\}^{n}} \theta(\mb{c}) & = &1, & \\
\displaystyle \sum_{\mbs{c} \in \{0,1\}^{n}: c_i = 1} \theta(\mb{c}) & = & p_i, & \forall i \in [n], \\
\displaystyle \sum_{\mbs{c} \in \{0,1\}^{n}: c_i = 1, c_j = 1} \theta(\mb{c}) & = & \displaystyle \frac{p_ip_j}{p}, & \forall (i,j) \in K_{n},
\end{array}
\end{equation}
where $p_n \leq p \leq 1$. The proof is divided into two parts:
\texitem{(1)} We first argue that it is sufficient to verify the existence of joint probabilities $\theta(\mb{c})$ for $n$ Bernoulli random variables such that:
\begin{equation} \label{eq:compslack3a}
\begin{array}{rlll}
\displaystyle \sum_{\mbs{c} \in \{0,1\}^{n}} \theta(\mb{c}) & = &1, & \\
\displaystyle \sum_{\mbs{c} \in \{0,1\}^{n}: c_i = 1} \theta(\mb{c}) & = & p_i, & \forall i \in [n], \\
\displaystyle \sum_{\mbs{c} \in \{0,1\}^{n}: c_i = 1, c_j = 1} \theta(\mb{c}) & = & \displaystyle \frac{p_ip_j}{p_{n}}, & \forall (i,j) \in K_{n},
\end{array}
\end{equation}
where the bivariate probabilities are modified from $p_ip_j/p$ to $p_ip_j/p_{n}$. This is because with $1 \leq 1/p \leq 1/p_{n}$, we can find a $\lambda \in [0,1]$ such that:
$$\displaystyle \frac{1}{p} = \lambda \frac{1}{p_{n}} +(1-\lambda)1.$$
Then, we can create the convex combination of two distributions $\overline{{\theta}}$ and $\underline{{\theta}}$ as follows:
$$\displaystyle {\theta} = \lambda \overline{{\theta}} +(1-\lambda)\underline{{\theta}},$$
where $\overline{{\theta}}$ is a probability distribution which satisfies (\ref{eq:compslack3a}) and $\underline{{\theta}}$ is a pairwise independent joint distribution on $n$ Bernoulli random variables with univariate probabilities given by $p_i$ and bivariate probabilities given by $p_ip_j$. The distribution $\underline{{\theta}}$ always exists as we can simply choose the mutually independent distribution on $n$ random variables with univariate probabilities $p_i$. The convex combination then guarantees the existence of a distribution ${\theta}$ which satisfies \eqref{eq:compslack3aa}. In step (2), we prove the existence of such a $\overline{{\theta}}$.
\texitem{(2)} To show that \eqref{eq:compslack3a} is feasible, observe that there always exists a feasible distribution on $n-1$ Bernoulli random variables with probabilities given by $\vartheta(\mb{c}_{-n}) = \mathbb{P}(\tilde{\mb{c}}_{-n} = \mb{c}_{-n})$ for all $\mb{c}_{-n} = (c_1,\ldots,c_{n-1}) \in \{0,1\}^{n-1}$ such that:
\begin{equation} \label{eq:compslack4}
\begin{array}{rlll}
\displaystyle \sum_{\mbs{c}_{-n} \in \{0,1\}^{n-1}} \vartheta(\mb{c}_{-n}) & = &1, & \\
\displaystyle \sum_{\mbs{c}_{-n} \in \{0,1\}^{n-1}: c_i = 1} \vartheta(\mb{c}_{-n}) & = & \displaystyle \frac{p_i}{p_{n}}, & \forall i \in [n-1], \\
\displaystyle \sum_{\mbs{c}_{-n} \in \{0,1\}^{n-1}: c_i = 1, c_j = 1} \vartheta(\mb{c}_{-n}) & = & \displaystyle \frac{p_ip_j}{p_{n}^2}, & \forall (i,j) \in K_{n-1}.
\end{array}
\end{equation}
Such a ${\vartheta}$ exists because we can simply choose the mutually independent distribution on $n-1$ random variables with univariate probabilities $p_i/p_{n}$ where the bivariate probabilities are given by $(p_i/p_n)(p_j/p_n)$. Then, we construct the distribution on $n$ random variables by setting the probability of the vector of all zeros to $1-p_{n}$, setting the probabilities of the scenarios $\mathbb{P}(\tilde{\mb{c}}_{-n} = \mb{c}_{-n},\tilde{{c}}_{n}=1)$ to $\vartheta(\mb{c}_{-n})p_n$ and setting all the remaining probabilities to zero. This creates a feasible distribution satisfying \eqref{eq:compslack3a} as seen in the construction of Table \ref{table:probaa}. This completes the proof.
\begin{table}[htbp]
\footnotesize
\caption{Probabilities of the scenarios to create a feasible distribution $\overline{{\theta}}$ in (\ref{eq:compslack3a}).}\label{table:probaa}
\begin{center}
\begin{tabular}{lllllll}
\mbox{Scenarios} & $c_1$ & $c_2$ & \ldots & $c_{n}$ &  \mbox{Probability} \\ \hline
\multirow{4}*{$2^{n-1}$$\begin{dcases*} \\ \\ \\ \end{dcases*}$} & 0 & 0 & \ldots & 0 & $\theta(\mb{c}) = 1-p_{n}$ \\
& 1 & 0 & \ldots & 0  & $0$   \\
& \vdots & \vdots & \vdots & \vdots  &  \vdots  \\
& 1 &  1 & \ldots &0 & $0$  \\
  \multirow{3}*{$2^{n-1}$$\begin{dcases*} \\ \\ \\ \end{dcases*}$}& 0 & 0 & \ldots &1 &  $\theta(\mb{c}) = p_{n}\vartheta(\mb{c}_{-n})$ \\
& \vdots & \vdots & \vdots & \vdots &  \vdots  \\
& 1& 1 & \ldots & 1 &  $\theta(\mb{c}) = p_{n}\vartheta(\mb{c}_{-n})$ \\
\end{tabular}
\end{center}
\end{table}
\end{proof}
We remark that there are alternative approaches to construct distributions satisfying Lemma \ref{lem:bivarfeas}. An anonymous referee provided the following construction. Let $\tilde{\mb{d}}$ denote a Bernoulli random vector with mutually independent random variables with marginal probabilities given by $\mathbb{P}(\tilde{d}_{i} = 1) = p_i/p$ for $i \in [n]$ and a Bernoulli random variable $\tilde{z}$ constructed independently with $\mathbb{P}(\tilde{z} = 1) = p$. Define $\tilde{c}_i = \tilde{d}_i\tilde{z}$ for $i \in [n]$. Then $\mathbb{P}(\tilde{c}_{i} = 1) = p_i$ for $i \in [n]$ and $\mathbb{P}(\tilde{c}_{i} = 1,\tilde{c}_j = 1) = p_ip_j/p$ for $(i,j) \in K_n$. We next show that Lemma \ref{lem:bivarfeas} can be extended to prove the existence of an alternative positively correlated Bernoulli random vector.
\begin{corollary}\label{cor:bivarfeasvariant}
Given an arbitrary univariate probability vector $\mb{p} \in [0,1]^n$ and bivariate probabilities $p_ip_j+\frac{p}{1-p}(1-p_i)(1-p_j)$ for $(i,j) \in K_n$ where $0 \leq p \leq\min_i p_i$, a Bernoulli random vector consistent with the given univariate and bivariate probabilities always exists.
\end{corollary}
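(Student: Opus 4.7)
The plan is to reduce the corollary to Lemma \ref{lem:bivarfeas} via a complementation transformation. Given a candidate Bernoulli random vector $\tilde{\mb{c}}$ with the desired marginals $p_i$ and bivariate probabilities $p_ip_j+\frac{p}{1-p}(1-p_i)(1-p_j)$, I would consider the complementary vector $\tilde{\mb{d}}$ defined coordinatewise by $\tilde{d}_i = 1-\tilde{c}_i$. Its marginals are $q_i := 1-p_i$, and its bivariate probabilities are $\mathbb{P}(\tilde{d}_i=1,\tilde{d}_j=1) = 1-p_i-p_j+\mathbb{P}(\tilde{c}_i=1,\tilde{c}_j=1)$.

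The next step is to verify that substituting the prescribed bivariates makes the right-hand side collapse to exactly the form that Lemma \ref{lem:bivarfeas} handles. Specifically,
\begin{equation*}
1-p_i-p_j + p_ip_j + \tfrac{p}{1-p}(1-p_i)(1-p_j) = (1-p_i)(1-p_j)\left(1+\tfrac{p}{1-p}\right) = \frac{q_iq_j}{1-p}.
\end{equation*}
So the existence of $\tilde{\mb{c}}$ with the corollary's bivariates is equivalent, via the bijection $\tilde{\mb{c}} \leftrightarrow \mb{1}-\tilde{\mb{c}}$ on $\{0,1\}^n$, to the existence of a Bernoulli vector $\tilde{\mb{d}}$ with marginals $q_i$ and bivariate probabilities $q_iq_j/(1-p)$.

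Then I would invoke Lemma \ref{lem:bivarfeas} with the input marginal vector $\mb{q}=\mb{1}-\mb{p}$ and the scalar parameter $1-p$. The lemma's hypothesis requires $\max_i q_i \leq 1-p \leq 1$, i.e.\ $1-\min_i p_i \leq 1-p$ and $p\geq 0$, which is exactly the assumption $0\leq p \leq \min_i p_i$ of the corollary. Applying the lemma yields a distribution for $\tilde{\mb{d}}$, and setting $\tilde{\mb{c}}=\mb{1}-\tilde{\mb{d}}$ produces the required distribution for $\tilde{\mb{c}}$.

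There is no real obstacle here: the only thing one needs to notice is the algebraic identity that turns the awkward expression $p_ip_j+\frac{p}{1-p}(1-p_i)(1-p_j)$ into a clean $q_iq_j/(1-p)$ after complementation. Once that is spotted, the corollary is an immediate application of Lemma \ref{lem:bivarfeas}, and the range $0\le p\le\min_i p_i$ is seen to correspond precisely to the range $\max_i q_i \le 1-p\le 1$ in the lemma.
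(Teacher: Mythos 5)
Your proposal is correct and follows essentially the same route as the paper: complement the random vector, observe that the prescribed bivariates become $q_iq_j/(1-p)$ with $q_i=1-p_i$, and invoke Lemma \ref{lem:bivarfeas} with parameter $1-p$, noting that $0\le p\le\min_i p_i$ translates exactly to $\max_i q_i\le 1-p\le 1$. The only cosmetic difference is that the paper runs the algebra in the reverse direction (starting from the complemented distribution and computing $\mathbb{P}(\tilde c_i=0,\tilde c_j=0)$), which is the same identity you verified.
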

\begin{proof}
From Lemma \ref{lem:bivarfeas}, it is straightforward to see that there exists a feasible bivariate distribution $\vartheta$ with univariate probabilities $1-p_i$ and bivariate probabilities $(1-p_i)(1-p_j)/(1-p)$ where $0 \leq p \leq\min_i p_i$ (since $1 \geq 1-p\geq \max_i (1-p_i)$). Note that this distribution satisfies
 $\mathbb{P}_{\vartheta}\left(\tilde{\mb{c}_i}=0\right)=p_i, \;  \forall i \in [n]$ and
 \begin{equation*}
\begin{array}{lllll}
\mathbb{P}_{\vartheta}\left(\tilde{\mb{c}_i}=0,\tilde{\mb{c}_j}=0\right)& =&\mathbb{P}_{\vartheta}\left(\tilde{\mb{c}_i}=0\right)-\left[\mathbb{P}_{\vartheta}\left(\tilde{\mb{c}_j}=1\right)-\mathbb{P}_{\vartheta}\left(\tilde{\mb{c}_i}=1,\tilde{\mb{c}_j}=1\right)\right]\\
& =&p_i-\left[(1-p_j)+(1-p_i)(1-p_j)/(1-p)\right]\\
& = & p_ip_j+\frac{p}{1-p}(1-p_i)(1-p_j),
\end{array}
\end{equation*}
for all $(i,j) \in K_{n}$.
By flipping the zeros and ones of the support of $\vartheta$ while retaining the same joint probabilities $\vartheta (\mb{c})$, we obtain the desired result.
\end{proof}

We note that Lemma \ref{lem:bivarfeas} and Corollary \ref{cor:bivarfeasvariant} provide conditions on the bivariate probabilities which guarantee the feasibility of positively correlated Bernoulli random vectors. Feasibility is typically not guaranteed for arbitrary correlation structures with Bernoulli random vectors. While prior works have identified specific correlation structures that are compatible with Bernoulli random vectors (see Chaganty and Joe \cite{Rao}, Qaqish \cite{Qaqish}, Emrich and Piedmonte \cite{Emrich}, Lunn and Davies \cite{Lunn}), the identified conditions in Lemma \ref{lem:bivarfeas} and Corollary \ref{cor:bivarfeasvariant} appear to be new to the best of our knowledge. This brings us to the first theorem, which provides the tightest upper bound on the probability of the union of $n$ pairwise independent events using Lemma \ref{lem:bivarfeas}.

\begin{theorem}\label{thm:unionbound}
Sort the probabilities in increasing value as $0 \leq p_{1} \leq p_{2} \leq \ldots \leq p_{n} \leq 1$. Then,
\begin{equation}
 \label{union1}
 \begin{array}{rlllll}
\displaystyle \overline{P}(n,1,\mb{p})  =  \displaystyle \min\left(\sum_{i \in [n]} p_{i}-p_{n}\left(\sum_{i \in [n-1]} p_{i}\right),1\right).
\end{array}
\end{equation}
\end{theorem}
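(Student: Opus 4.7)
The plan is to first observe that the upper bound in \eqref{union1} has already been established in the paragraph preceding the theorem: weak duality applied to the dual feasible solution $\lambda_{0}=0$, $\lambda_{i}=1$ for $i\in[n]$, $\lambda_{in}=-1$ for $i\in[n-1]$, and $\lambda_{ij}=0$ otherwise yields $\overline{P}(n,1,\mb{p})\leq \sum_{i\in[n]} p_{i}-p_{n}\sum_{i\in[n-1]} p_{i}$, which together with the trivial dual solution attaining $1$ gives the minimum on the right of \eqref{union1}. The substantive task is therefore to exhibit, for every $\mb{p}\in[0,1]^{n}$, a pairwise independent distribution that attains this value.

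My principal construction, valid whenever $\sum_{i\in[n-1]} p_{i}\leq 1$ (equivalently, when the expression inside the minimum is at most $1$), conditions on $\tilde{c}_{n}$. I set $\mathbb{P}(\tilde{c}_{n}=1)=p_{n}$. Conditional on $\tilde{c}_{n}=1$, I apply Lemma \ref{lem:bivarfeas} to the first $n-1$ coordinates with the choice $p=p_{n}$, which is admissible because the sorted ordering gives $\max_{i\in[n-1]} p_{i}\leq p_{n}$; this supplies a joint law with univariate marginals $p_{i}$ and bivariate marginals $p_{i}p_{j}/p_{n}$. Conditional on $\tilde{c}_{n}=0$, I support the distribution only on the zero vector and on the $n-1$ standard basis vectors $\mb{e}_{i}$ for $i\in[n-1]$, assigning mass $p_{i}(1-p_{n})$ to each $\mb{e}_{i}$ and the residual $(1-p_{n})(1-\sum_{i\in[n-1]} p_{i})\geq 0$ to the zero vector. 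A routine check then shows that the univariate marginals recover $p_{i}$, the bivariates involving $\tilde{c}_{n}$ equal $p_{i}p_{n}$, and the bivariates among $(\tilde{c}_{1},\ldots,\tilde{c}_{n-1})$ equal $(p_{i}p_{j}/p_{n})\,p_{n}+0=p_{i}p_{j}$, since the $\tilde{c}_{n}=0$ branch carries no two-coordinate mass. The union probability of the resulting law is $1-(1-p_{n})(1-\sum_{i\in[n-1]} p_{i})=\sum_{i\in[n]} p_{i}-p_{n}\sum_{i\in[n-1]} p_{i}$, which matches the bound in this regime.

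The main obstacle is the complementary regime $\sum_{i\in[n-1]} p_{i}>1$, in which the residual mass above is negative and the construction becomes infeasible. Here the minimum in \eqref{union1} collapses to $1$, and I must separately produce a pairwise independent distribution with $\mathbb{P}(\bigcup_{i\in[n]}\{\tilde{c}_{i}=1\})=1$. My remedy is to relax the conditional bivariates $\mathbb{P}(\tilde{c}_{i}=1,\tilde{c}_{j}=1\mid\tilde{c}_{n}=1)$ downward from $p_{i}p_{j}/p_{n}$, reallocating the surplus bivariate mass onto two-coordinate indicators supported on the $\tilde{c}_{n}=0$ fiber, and to choose these parameters so that the zero-vector probability is driven exactly to zero while the unconditional pairwise independence constraints continue to hold. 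The technical crux is verifying that such a choice always stays within the feasibility polytope (all conditional probabilities non-negative) throughout this regime, and then assembling the two cases into a single closed-form extremal distribution that can be tabulated explicitly.
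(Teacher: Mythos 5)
Your treatment of the regime $\sum_{i\in[n-1]}p_i\leq 1$ is correct and is essentially the paper's own construction: the distribution you describe is exactly the one tabulated in Table \ref{table:hwprobdist}, with Lemma \ref{lem:bivarfeas} (applied to $n-1$ variables with $p=p_n$) supplying the conditional law on the fiber $\tilde{c}_n=1$, and the marginal and bivariate checks you list are the right ones. The genuine gap is in the complementary regime $\sum_{i\in[n-1]}p_i>1$, where the bound collapses to $1$. There you only describe a plan---lower the conditional bivariates on the $\tilde{c}_n=1$ fiber and reallocate the surplus onto two-coordinate atoms in the $\tilde{c}_n=0$ fiber until the zero vector carries no mass---and you yourself flag that the ``technical crux'' of showing this reallocation stays inside the feasibility polytope is unverified. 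That verification is not routine: Lemma \ref{lem:bivarfeas} certifies existence only for bivariates of the special form $p_ip_j/p$ with $p\geq\max_i p_i$, so once you perturb the conditional bivariates non-uniformly (as a pair-by-pair reallocation requires) you have no existence guarantee left to lean on, and nothing in your sketch shows that the deficit $(1-p_n)\left(\sum_{i\in[n-1]}p_i-1\right)$ can always be absorbed with every atom remaining nonnegative. As written, the case $\sum_{i\in[n-1]}p_i>1$ is not proved.

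The paper closes this case by a different and much lighter argument that you could adopt: choose the index $t$ with $\sum_{i\in[t-1]}p_i\leq 1<\sum_{i\in[t]}p_i$, set $\delta=1-\sum_{i\in[t-1]}p_i<p_t$, apply the already-proved non-trivial case to the $t+1$ variables with marginals $p_1,\ldots,p_{t-1},\delta,p_{t+1}$ (whose first $t$ marginals sum to exactly one, so the union probability there equals one), observe that raising $\delta$ to $p_t$ can only increase the union probability, and then adjoin $\tilde{c}_{t+2},\ldots,\tilde{c}_n$ independently. If you prefer to keep your direct reallocation scheme, you must actually exhibit the perturbations and prove nonnegativity of every resulting atom; otherwise the reduction above is the cleaner path to completing the proof.
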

\begin{proof}
With $p_{ij} = p_ip_j$ and $k = 1$, the optimal value of the primal linear program in \eqref{eq:generalprimal} is bounded since it is feasible and the objective function describes a probability value. The optimality conditions of linear programming states that $\{\theta(\mb{c}); \mb{c} \in \{0,1\}^n\}$ is primal optimal and $\{\lambda_{ij}; (i,j) \in K_n, \lambda_i; i \in [n], \lambda_0\}$ is dual optimal if and only if they satisfy: (i) the primal feasibility conditions in (\ref{eq:generalprimal}), (ii) the dual feasibility conditions in (\ref{eq:generaldual}) and (iii) the complementary slackness conditions given by:
\begin{equation*}
\begin{array}{rlllll}
\displaystyle \left(\sum_{(i,j) \in K_n} \lambda_{ij}{c}_{i}{c}_{j}+ \sum_{i \in [n]} \lambda_{i}{c}_{i} +\lambda_{0}\right)\theta(\mb{c}) & =&  0, & \forall \mb{c} \in \{0,1\}^n: \sum_{t} {c}_{t} = 0,\\
\displaystyle \left(\sum_{(i,j) \in K_n} \lambda_{ij}{c}_{i}{c}_{j}+ \sum_{i \in [n]} \lambda_{i}{c}_{i} +\lambda_{0}-1\right)\theta(\mb{c}) & =&  0, & \forall \mb{c} \in \{0,1\}^n: \sum_{t} {c}_{t} \geq 1.
\end{array}
\end{equation*}
\texitem{(1)} {Proof of tightness of non-trivial bound in \eqref{union1}}: We show that $\overline{P}(n,1,\mb{p}) = \sum_{i \in [n]} p_{i}-p_{n}(\sum_{i \in [n-1]} p_{i})$ which is the non-trivial part of the upper bound in (\ref{union1}) when $\sum_{i \in [n-1]}p_i \leq 1$. Consider the dual feasible solution $\lambda_{0}= 0$, $\lambda_{i}= 1 \; \forall i \in [n]$, $\lambda_{in}= -1 \; \forall i \in [n-1]$ and $\lambda_{ij} = 0$ otherwise.
We verify the tightness of the bound, by showing there exists a primal solution (feasible distribution) which satisfies the complementary slackness conditions. Towards this, observe that from the complementary slackness conditions in (iii) for all values of $\mb{c} \in \{0,1\}^n$ with $\sum_{t \in [n-1]} {c}_{t} \geq 2$ and $c_n = 0$, we have:
\begin{equation*}
\begin{array}{rlllll}
\displaystyle c_n + \sum_{i \in [n-1]} {c}_{i}(1-c_n)-1 > 0  \Longrightarrow   \theta(\mb{c}) = 0.
\end{array}
\end{equation*}
This forces a total of $2^{n-1}-n$ scenarios to have zero probability. Building on this, we set the probabilities of the $2^n$ possible scenarios of $\tilde{\mb{c}}$ as shown in Table \ref{table:hwprobdist}.
The probability of the vector of all zeros (one scenario) is set to $1 - \sum_{i \in [n]}p_i + p_n(\sum_{i \in [n-1]}p_i)$. To match the bivariate probabilities $\mathbb{P}(\tilde{c}_i=1,\tilde{c}_n=0) = p_i(1-p_n)$, we have to then set the probability of the scenario where $c_i = 1, c_n = 0$ and all remaining $c_j = 0$ to $p_i(1-p_n)$. This corresponds to the $n-1$ scenarios in Table \ref{table:hwprobdist}.
\begin{table}[htbp]
\footnotesize
\caption{Probabilities of $2^{n}$ scenarios.}\label{table:hwprobdist}
\begin{center}
\begin{tabular}{llllllll}
\mbox{Scenarios} & $c_1$ & $c_2$ & \ldots & $c_{n-1}$ & $c_n$ &  \mbox{Probability} \\ \hline
 \mbox{$1$ } &0 & 0 & \ldots & 0 & 0 &  $1 - \sum_{i \in [n]}p_i + p_n\left(\sum_{i \in [n-1]}p_i\right)$ \\
\multirow{4}*{$n-1$ $\begin{dcases*} \\ \\ \\ \end{dcases*}$} & 1 & 0 & \ldots & 0 & 0 & $p_1(1-p_n)$ \\
& 0 & 1 & \ldots & 0 & 0 & $p_2(1-p_n)$   \\
& \vdots & \vdots & \vdots & \vdots & \vdots &  \vdots  \\
& 0 & 0 & \ldots &1 & 0 & $p_{n-1}(1-p_n)$  \\
\multirow{2}*{$2^{n-1}-n$ $\begin{dcases*} \\ \\ \\ \end{dcases*}$} & 1 & 1 & \ldots &0 & 0 & $0$  \\
& \vdots & \vdots & \vdots & \vdots & \vdots & \vdots  \\
& 1& 1 & \ldots & 1 & 0 & $0$  \\
  \multirow{2}*{$2^{n-1}$ $\begin{dcases*} \\ \\ \\ \end{dcases*}$}& 0 & 0 & \ldots &0 & 1 & $\theta(\mb{c})$ \multirow{2}*{ $\begin{rcases*} \\ \\ \\ \end{rcases*} p_n$} \\
& \vdots & \vdots &  \vdots & \vdots & \vdots & \vdots  \\
& 1& 1 & \ldots & 1 & 1 &  $\theta(\mb{c})$
\end{tabular}
\end{center}
\end{table}
 Hence, to ensure feasibility of the distribution, we need to show that there exist nonnegative values of $\theta(\mb{c})$ for the last $2^{n-1}$ scenarios such that:
\begin{equation*}
\begin{array}{rlll} \label{compslack2}
\displaystyle \sum_{\mbs{c} \in \{0,1\}^n: c_n = 1} \theta(\mb{c}) & = &p_n, & \\
\displaystyle \sum_{\mbs{c} \in \{0,1\}^n: c_i = 1, c_n = 1} \theta(\mb{c}) & = & p_ip_n, & \forall i \in [n-1],\\
\displaystyle \sum_{\mbs{c} \in \{0,1\}^n: c_i = 1, c_j = 1, c_n = 1} \theta(\mb{c}) & = & p_ip_j, & \forall (i,j) \in K_{n-1},
\end{array}
\end{equation*}
or equivalently, by conditioning on ${c}_n = 1$, we need to show that there exists nonnegative values of $\vartheta(\mb{c}_{-n}) = \mathbb{P}(\tilde{\mb{c}}_{-n} = \mb{c}_{-n})$ for all $\mb{c}_{-n} = (c_1,\ldots,c_{n-1})\in \{0,1\}^{n-1}$ such that:
\begin{equation} \label{eq:compslack3}
\begin{array}{rlll}
\displaystyle \sum_{\mbs{c}_{-n} \in \{0,1\}^{n-1}} \vartheta(\mb{c}_{-n}) & = &1, & \\
\displaystyle \sum_{\mbs{c}_{-n} \in \{0,1\}^{n-1}: c_i = 1} \vartheta(\mb{c}_{-n}) & = & p_i, & \forall i \in [n-1], \\
\displaystyle \sum_{\mbs{c}_{-n} \in \{0,1\}^{n-1}: c_i = 1, c_j = 1} \vartheta(\mb{c}_{-n}) & = & \displaystyle \frac{p_ip_j}{p_n}, & \forall (i,j) \in K_{n-1}.
\end{array}
\end{equation}
This corresponds to verifying the existence of a probability distribution on $n-1$ Bernoulli random variables with univariate probabilities $p_i$ and bivariate probabilities $p_ip_j/p_n$ where $p_{1} \leq p_{2} \leq \ldots \leq p_{n-1} \leq p_n$. Observe that in (\ref{eq:compslack3}), the univariate probabilities remain the same but the random variables are no longer pairwise independent.
Now we make use of Lemma \ref{lem:bivarfeas} to claim that (\ref{eq:compslack3}) is always feasible. By considering $n-1$ variables and setting $p=p_n \geq \max_{i \in [n-1]} p_i $, it is to easy to see from Lemma \ref{lem:bivarfeas} that there exists a distribution which satisfies (\ref{eq:compslack3}). An outline of the different distributions used in the construction is provided in Figure \ref{fig:unionconstruction}.
\begin{figure}[htbp]
 \centering
 \begin{tikzpicture}[thick,scale=0.65, every node/.style={transform shape}]
      \node[ draw,circle, text width=2cm,align=center] (fl1) {\large{$(p_{i},p_{i}p_{j})$}\\~\\  \small{\mbox{n dimensions}}};
      \node[xshift=0 cm,yshift=-4cm,draw, circle, text width=2.2cm,align=center] (fl2) {\large{ $\left(p_i,\frac{p_ip_j}{p_n}\right)$}\\~\\\small{\mbox{n-1 dimensions}}};
        \node[xshift=-3cm,yshift=-7.5cm,draw,circle, text width=2.2cm,align=center] (fl3){\large{$(p_{i},p_ip_j)$}\\~\\\small{\mbox{n-1 dimensions}}};
 \node[xshift=+3cm,yshift=-7.5cm,draw,circle, text width=2.2cm,align=center] (fl4) {\large{$\bigg(p_{i}, \frac{p_ip_j}{p_{n-1}}\bigg)$}\\~\\\small{\mbox{n-1 dimensions}}};
 \node[xshift=+6cm,yshift=-11.1cm,draw,circle, text width=2.2cm,align=center] (fl6)
      {\small{$\bigg( \displaystyle \frac{p_{i}}{p_{n-1}},\frac{p_ip_j}{p_{n-1}^{2}}\bigg)$\\~\\\small{\mbox{n-2 dimensions}}}};
    \path[line] (fl2.north) --node[right]{}(fl1.south);
        \path[line] (fl3.60) --  node[sloped, anchor=center, above]{}(fl2.225);
        \path[line] (fl4.120) --  node[sloped, anchor=center, above]{}(fl2.315);
        \path[line] (fl6.120) -- node[sloped, anchor=center, above]{}(fl4.315);
      \draw (fl1.south) -- (fl2.north);
    \end{tikzpicture}
\caption{Construction of the extremal distribution.}
  \label{fig:unionconstruction}
    \end{figure}
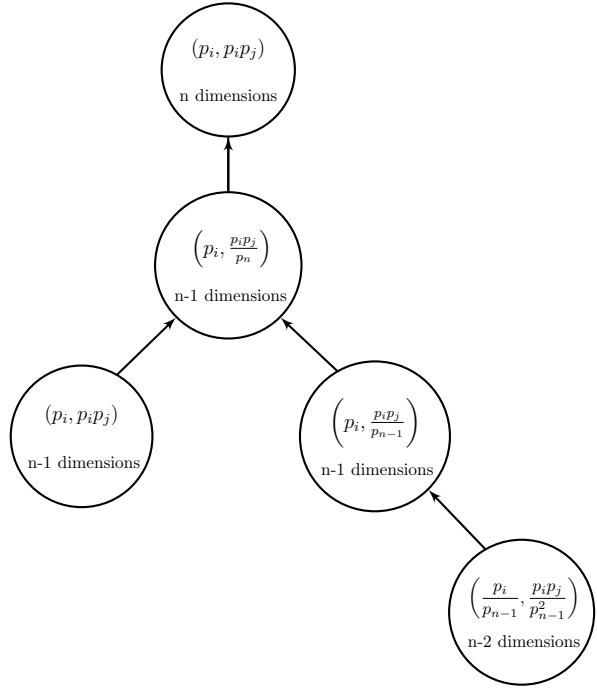
This completes the proof for the case where $\sum_{i \in [n-1]}p_i \leq 1$ with:
\begin{equation*}\label{treeb}
\begin{array}{rlll}
\displaystyle \overline{P}(n,1,\mb{p}) =\sum_{i \in [n]} p_{i}-p_{n}\left(\sum_{i \in [n-1]} p_{i}\right).
\end{array}
\end{equation*}
   \texitem{(2)} {Proof of tightness of the trivial part of the bound in \eqref{union1}}:
To complete the proof, consider the case with $\sum_{i \in [n-1]} p_i > 1$. Then, there exists an index $t \in [2,n-1]$ such that $\sum_{i \in [t-1]} p_i \leq 1$ and $\sum_{i \in [t]} p_i > 1$. Let $\delta = 1 - \sum_{i \in [t-1]} p_i$. Clearly $0 \leq \delta < p_t$. From step (1), we know that there exists a distribution for $t+1$ pairwise independent random variables with marginal probabilities $p_1,p_2,\ldots,p_{t-1},\delta,p_{t+1}$ such that the probability of the sum of the random variables being at least one is equal to one (since the sum of the first $t$ probabilities in this case is equal to one). By increasing the marginal probability $\delta$ to $p_t$, we can only increase this probability. To see this, consider the distribution for $t+1$ mutually independent Bernoulli random variables with marginal probabilities $p_1,p_2,\ldots,p_{t-1},1,p_{t+1}$ where the probability of the sum of the random variables being at least one is equal to one. We can then find a $\lambda \in [0,1)$ such that $p_t = \lambda \delta + (1-\lambda)$ and construct a pairwise independent distribution for $t+1$ pairwise independent random variables with marginal probabilities $p_1,p_2,\ldots,p_{t-1},p_t,p_{t+1}$ by using the convex combination of the two distributions with sum of the random variables taking a value at least one with probability one.
We can generate the remaining random variables $\tilde{c}_{t+2},\ldots,\tilde{c}_n$ independently with marginal probabilities $p_{t+2},\ldots,p_n$. This provides a feasible distribution that attains the bound of one, thus completing the proof.
\end{proof}

\subsection{Connection of Theorem \ref{thm:unionbound} to existing results} \label{subsec:hunter}
Bounds on the probability that the sum of Bernoulli random variables is at least one has been extensively studied in the literature, under knowledge of general bivariate probabilities. Let ${A}_i$ denote the event that ${c}_i = 1$ for each $i$, then, $k = 1$ simply corresponds to bounding the probability of the union of events. When the marginal probabilities $p_i = \mathbb{P}(A_{i})$ for $i \in [n]$ and bivariate probabilities $p_{ij} =\mathbb{P}(A_{i} \cap A_{j})$ for $(i,j) \in K_n$ are given, Hunter \cite{hunter1976} and Worsley \cite{worsley1982} derived the following bound by optimizing over spanning trees $\tau \in T$:
\begin{align}\label{hunter1}
\mathbb{P}(\displaystyle {{\cup}}_i A_{i}) \leq \displaystyle \sum_{i \in [n]} p_{i}-\underset {\tau \in T}{\max}\sum_{(i,j) \in \tau} p_{ij},
\end{align} where $T$ is the set of all spanning trees on the complete graph with $n$ nodes with edge weights given by $p_{ij}$. A special case of the Hunter \cite{hunter1976} bound was derived by Kounias \cite{kounias1968}:
\begin{align}\label{kounias1}
\mathbb{P}(\displaystyle  {{\cup}_i} A_{i}) \leq \displaystyle \sum_{i \in [n]} p_{i}-\underset {j\in [n]}{\max} \sum_{i\neq j} p_{ij},
\end{align}
which subtracts the maximum weight of a star spanning tree from the sum of the marginal probabilities. Tree bounds have been shown to be tight, in some special cases as outlined next:
\texitem{(a)} Zero bivariate probabilities for all pairs: When all the probabilities $p_{ij}$ are zero, the bound reduces to the Boole union bound which is tight.
\texitem{(b)} Zero bivariate probabilities outside a given tree: Given a tree $\tau$ such that the bivariate probabilities $p_{ij}$ are zero for edges $(i,j) \notin \tau$, Worsley \cite{worsley1982} proved that the bound is tight (see Veneziani \cite{veneziani2008hunter} for related results).
\texitem{(c)} Lower bounds on bivariate probabilities: Boros et al. \cite{boros2014} proved that by relaxing the equality of bivariate probabilities to lower bounds on bivariate probabilities: $$\mathbb{P}\big(A_{i} \cap A_{j}\big)\geq p_{ij},\; \forall (i,j) \in K_n ,$$ the tightest upper bound on the probability of the union is exactly the Hunter \cite{hunter1976} and Worsley \cite{worsley1982}  bound (see Maurer \cite{maurer} for related results).
\texitem{(d)} Pairwise independent variables (Theorem \ref{thm:unionbound} in this paper): With pairwise independent random variables where $p_{ij} = p_ip_j$, the maximum weight spanning trees in \eqref{hunter1} is exactly the star tree with the root at node $n$ and edges $(i,n)$ for all $i \in [n-1]$. In, this case, the Kounias \cite{kounias1968}, Hunter \cite{hunter1976} and Worsley \cite{worsley1982} bound reduce to the bound in \eqref{union1} which is shown to be tight in Theorem \ref{thm:unionbound} of this paper.

The next example illustrates that with general bivariate probabilities, even if a joint distribution exists, the Hunter \cite{hunter1976}, Worsley \cite{worsley1982} bound and Kounias \cite{kounias1968} bound are not guaranteed to be tight.

\begin{example} \label{example:hw}
Consider $n=4$ Bernoulli random variables with univariate marginal probabilities: $$p_1=0.35,\;p_2 =0.19,\;p_3 =0.13,\;p_4=0.2,$$ and bivariate probabilities:
$$p_{12}=0.001,\;p_{13}=0.022,\;p_{14}=0.03,\;p_{23}=0.017,\;p_{24}=0.018,\;p_{34}=0.019.$$ It can be verified using linear programming that a joint distribution with these given univariate and bivariate probabilities exists. The tight upper bound obtained by solving the linear program \eqref{eq:generalprimal} is equal to: $$\max_{\theta \in \Theta(\mbs{p},p_{ij};(i,j) \in K_4)} \mathbb{P}_\theta \left(\tilde{c}_{1}+\tilde{c}_{2}+\tilde{c}_{3}+\tilde{c}_{4} \geq 1\right)=0.784.$$
Figure \ref{fig:hwknottight} displays the star spanning tree chosen by the Kounias \cite{kounias1968} bound and the spanning tree chosen by the Hunter \cite{hunter1976} and Worsley \cite{worsley1982} bound. It is clear that none of these bounds are tight in this instance. Boros et al. \cite{boros2014} also provide randomly generated instances (see Table 1 of Section 4 in their paper) where the Hunter \cite{hunter1976} and Worsley \cite{worsley1982} bound is not tight, athough it provides the best performance among the upper bounds considered there.
\begin{figure}[H]
  \centering
   \includegraphics[scale=0.4]{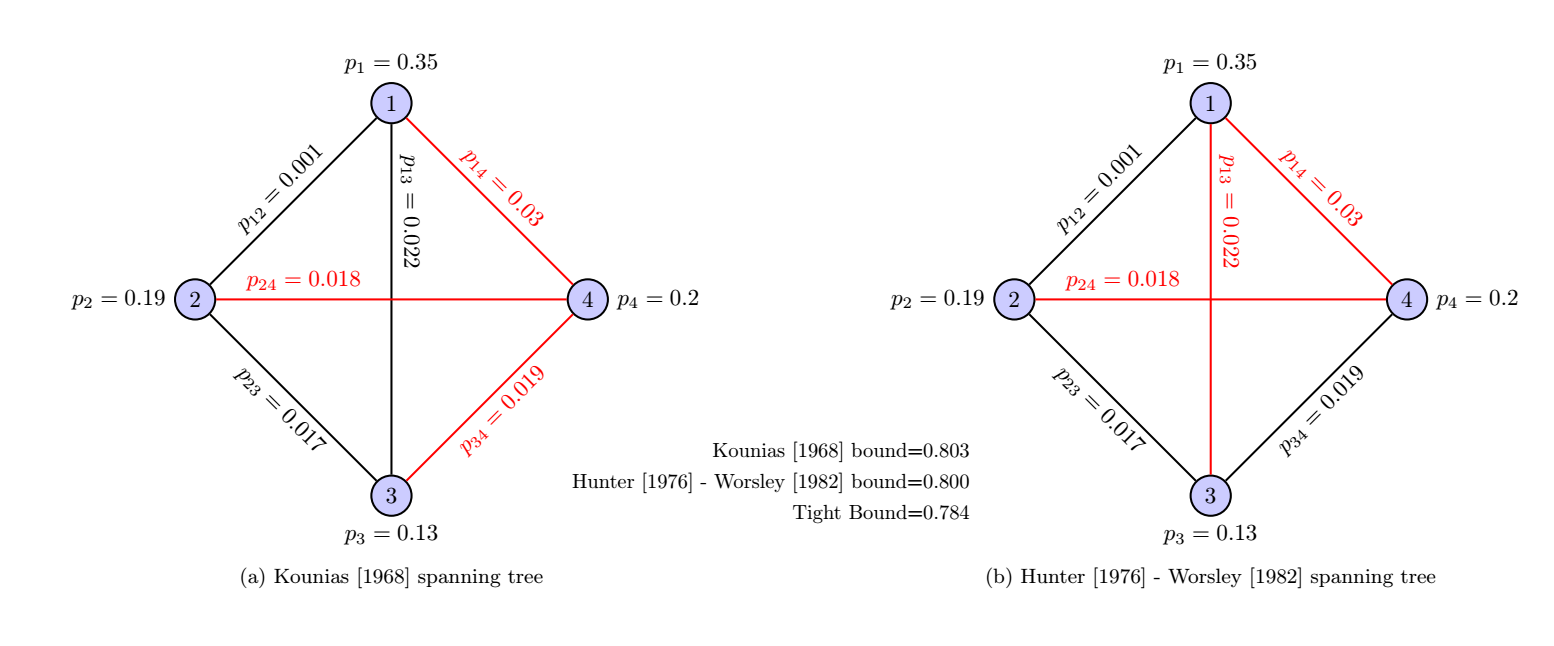}
   \caption{Kounias \cite{kounias1968}, Hunter \cite{hunter1976} and Worsley \cite{worsley1982} spanning trees with general bivariates}
   \label{fig:hwknottight}
 \end{figure}
Figure \ref{fig:all3same} demonstrates that with the same set of univariate marginals, when pairwise independence is enforced, the spanning trees obtained from all these approaches are identical and the bounds in \eqref{hunter1} and \eqref{kounias1} equal the tight bound $0.688$ (from Theorem \ref{thm:unionbound}).
\begin{figure}[H]
  \centering
\includegraphics[scale=0.4]{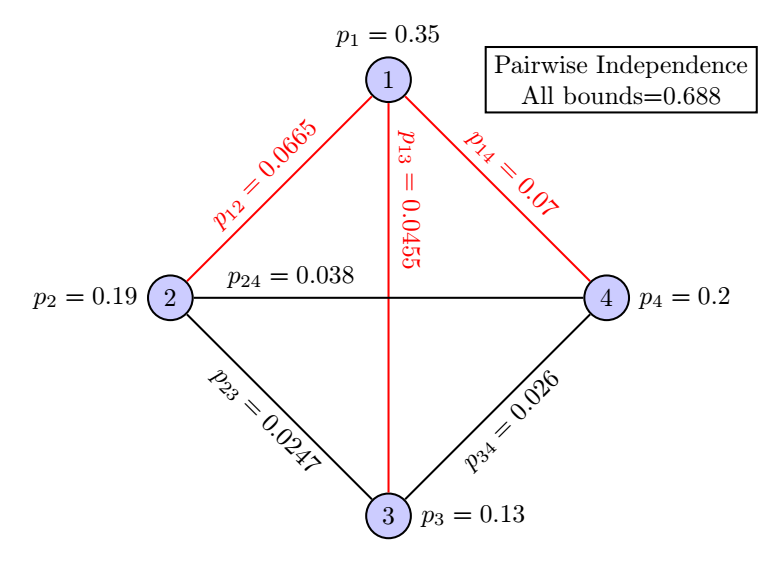}
\caption{Optimal spanning tree with pairwise independence when $\mb{p}=(0.35,0.19,0.13,0.2)$.}
\label{fig:all3same}
 \end{figure}
\end{example}
\subsection{Comparison with the union bound} \label{subsec:43bound}

The next proposition provides an upper bound on the ratio of the Boole union bound and the pairwise independent bound in (\ref{union1}) in Theorem \ref{thm:unionbound}.
\begin{prop} \label{prop:25}
For all $\mb{p} \in [0,1]^n$, we have:
$$\displaystyle \frac{{\overline{P}}_u(n,1,\mb{p})}{{\overline{P}}(n,1,\mb{p})} \leq \frac{4}{3}.$$
The ratio of $4/3$ is attained when $\sum_{i \in [n-1]} p_{i}=1/2$ and $p_n=1/2$.
\end{prop}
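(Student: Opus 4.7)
The plan is to introduce $S := \sum_{i \in [n-1]} p_i$ and rewrite the two quantities from \eqref{frechet} and Theorem \ref{thm:unionbound} as
\[
\overline{P}_u(n,1,\mb{p}) = \min(S + p_n,\, 1), \qquad \overline{P}(n,1,\mb{p}) = \min\bigl(1 - (1-S)(1-p_n),\, 1\bigr).
\]
In this form, the ratio depends only on the two scalars $(S, p_n) \in [0,n-1]\times[0,1]$, so the dimension $n$ drops out of the analysis (only the ordering $p_n = \max_i p_i$ is used). If $\overline{P}(n,1,\mb{p}) = 1$ the ratio is trivially at most $1$, so I may restrict attention to $(1-S)(1-p_n) > 0$, which forces $S < 1$ and $p_n < 1$.

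I would then split into two subcases according to which branch of the $\min$ defines $\overline{P}_u$. In Subcase (i), $S + p_n \leq 1$, the ratio equals $(S+p_n)/(S + p_n - Sp_n)$, and the desired inequality reduces algebraically to $S + p_n \geq 4Sp_n$. This follows from a double application of AM-GM / monotonicity: $4Sp_n \leq (S+p_n)^2 \leq S + p_n$, where the second step uses $S + p_n \leq 1$. Equality forces $S = p_n$ (AM-GM) and $S + p_n = 1$, pinning the extremal point at $S = p_n = 1/2$. In Subcase (ii), $S + p_n > 1$, the union bound saturates and the ratio becomes $1/[1 - (1-S)(1-p_n)]$, so bounding it by $4/3$ is equivalent to $(1-S)(1-p_n) \leq 1/4$. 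Writing $u = 1-S$, $v = 1-p_n$, the subcase hypothesis becomes $u + v < 1$, and AM-GM gives $uv \leq (u+v)^2/4 < 1/4$; hence the ratio is actually strictly below $4/3$ in this subcase.

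Finally, to confirm attainability I substitute $\sum_{i \in [n-1]} p_i = 1/2$ and $p_n = 1/2$ directly: $\overline{P}_u = \min(1,1) = 1$ and $\overline{P} = 1 - (1/2)(1/2) = 3/4$, so the ratio is exactly $4/3$. I do not anticipate a serious obstacle beyond the two-case AM-GM computation; the mild care needed is the unified treatment of the two $\min$'s and the observation that the extremal pair $(S,p_n) = (1/2, 1/2)$ sits precisely on the boundary $S + p_n = 1$ between the two subcases, where both estimates become tight simultaneously.
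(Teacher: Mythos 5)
Your proposal is correct and follows essentially the same route as the paper: reduce to the two scalars $S=\sum_{i\in[n-1]}p_i$ and $p_n$, split on which branch of the $\min$ in the union bound is active ($S+p_n\le 1$ versus $S+p_n\ge 1$), and verify the $4/3$ bound in each case with equality at $S=p_n=1/2$. The only difference is cosmetic: you make the optimization explicit via AM--GM where the paper simply asserts the maximizer $\alpha=1-p_n$, $p_n=1/2$, so your write-up is if anything slightly more complete.
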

\begin{proof}
Assume the probabilities are sorted in increasing value as $0 \leq p_{1} \leq p_{2} \leq \ldots \leq p_{n} \leq 1$. It is straightforward to see that if $\sum_{i \in [n-1]} p_i > 1$, both the bounds take the value of $\overline{P}(n,1,\mb{p}) = {\overline{P}}_u(n,1,\mb{p}) = 1$. Now assume, $\alpha = \sum_{i \in [n-1]} p_{i} \leq 1$. The ratio is given as:
\begin{equation*}
\begin{array}{llll}
\displaystyle \frac{\overline{P}_u(n,1,\mb{p})}{{\overline{P}}(n,1,\mb{p})} & =& \displaystyle \frac{\min\left(\sum_{i \in [n]} p_{i},1\right)}{ \sum_{i \in [n]} p_{i}-p_{n}\left(\sum_{i \in [n-1]} p_{i}\right)}\\
& = & \displaystyle \frac{\min\left(\alpha+p_n,1\right)}{\alpha+p_n-\alpha p_{n}}.
\end{array}
\end{equation*}
If $\alpha+p_n \leq 1$, then we have:
\begin{equation*}
\begin{array}{llll} \label{bestp_n}
\displaystyle \frac{\overline{P}_u(n,1,\mb{p})}{{\overline{P}}(n,1,\mb{p})}&=&\displaystyle \frac{\alpha+p_n}{\alpha+p_n-\alpha p_n}\\
&  = &\displaystyle \frac{1}{1-\frac{1}{\frac{1}{\alpha}+\frac{1}{p_n}}}\\
& \leq & \displaystyle \frac{4}{3}\\
& & [\mbox{where the maximum is attained at } \alpha = 1-p_n \mbox{ and } p_n = 1/2].
\end{array}
\end{equation*}
If $\alpha+p_n \geq 1 $, then we have:
\begin{equation*}
\begin{array}{llll} \label{bestp_n}
\displaystyle \frac{\overline{P}_u(n,1,\mb{p})}{{\overline{P}}(n,1,\mb{p})}&=&\displaystyle \frac{1}{\alpha+p_n-\alpha p_{n}}\\
&=&\displaystyle \frac{1}{\alpha(1-p_n)+p_{n}}\\
& \leq & \displaystyle \frac{4}{3}\\
& & [\mbox{where the maximum is attained at } \alpha = 1-p_n \mbox{ and } p_n = 1/2].
\end{array}
\end{equation*}
This gives the bound of $4/3$ when $p_n = 1/2$ and $\alpha = 1/2$.
\end{proof}
We next illustrate an application of Theorem \ref{thm:unionbound} and Proposition \ref{prop:25} in comparing bounds with dependent and independent random variables in correlation gap analysis.
\begin{example} [Correlation gap analysis] \label{ex:correlationanalysis}
The notion of ``correlation gap'' was introduced by Agrawal et al. \cite{Agrawal2012}. It is defined as the ratio of the worst-case expected cost for random variables with given univariate marginals to the expected cost when the random variables are independent. When $\tilde{\mb{c}}$ is a Bernoulli random vector and $\theta_{ind}$ denotes the independent distribution, the correlation gap is defined as:
\begin{equation}
\begin{array}{rlll}\label{correlation1}
\kappa_u(\mb{p}) & = &  \underset{\theta \in \Theta(\mbs{p})}{\sup} \quad \dfrac{\mathbb{E}_{\theta}[f(\tilde{\mb{c}})]}{\mathbb{E}_{\scalebox{0.65}{${\theta}_{ind}$}}[f(\tilde{\mb{c}})]}.
\end{array}
\end{equation}
A function $f: \{0,1\}^n \rightarrow \mathbb{R}_+$ is: (i) submodular if $f(\mb{c}) + f(\mb{d}) \geq  f(\mb{c} \wedge \mb{d})+f(\mb{c} \vee \mb{d})$ for all $\mb{c}, \mb{d} \in \{0,1\}^n$ with $\mb{c} \wedge \mb{d} = (\min(c_1,d_1),\ldots,\min(c_n,d_n))$ and $\mb{c} \vee \mb{d} = (\max(c_1,d_1),\ldots,\max(c_n,d_n))$ and (ii) nondecreasing if $f(\mb{c}) \geq f(\mb{d})$ for all $\mb{c} \geq \mb{d}$. A key result in this area is that for any nonnegative, nondecreasing, submodular function, the correlation gap is always upper bounded by $e/(e-1)$ (see Calinescu et al. \cite{Calinescu2}, Agrawal et al. \cite{Agrawal2012}). The example constructed in these papers show the bound is attained for the maximum of binary variables $ f(\mb{c}) = \max_{i \in [n]} c_i$.
For a given marginal vector $\mb{p}$, the correlation gap in \eqref{correlation1} reduces to:
\begin{equation}\label{correlation2}
\begin{array}{rlll}
\kappa_u(\mb{p})& = & \displaystyle \frac{\max_{\theta \in \Theta(\mbs{p})}\mathbb{E}_{\theta}[\max\left(\tilde{c}_1,\tilde{c}_2,...,\tilde{c}_n\right)]}{1-\prod_{i=1}^n (1-p_i)}\\
& = &\displaystyle \frac{\max_{\theta \in \Theta(\mbs{p})}\mathbb{P}_\theta \left(\sum_{i \in [n]} \tilde{c}_i \geq 1\right)}{1-\prod_{i=1}^n (1-p_i)}\\
& = &\displaystyle \frac{\min\left(\sum_{i \in [n]} p_i,\;1\right)}{1-\prod_{i=1}^n (1-p_i)}.
\end{array}
\end{equation}
We now provide an extension of this definition by considering the ratio of the worst-case expected cost when the random variables are pairwise independent to the expected cost when the random variables are independent. This is given as:
 \begin{equation*}
\begin{array}{rlll}
\kappa(\mb{p}) & = &  \underset{\theta \in \Theta(\mbs{p},p_{ij}; (i,j) \in K_n)}{\sup} \quad \dfrac{\mathbb{E}_{\theta}[f(\mb{\tilde{c}})]}{\mathbb{E}_{\scalebox{0.65}{${\theta}_{ind}$}}[f(\mb{\tilde{c}})]},
\end{array}
\end{equation*}
which reduces in this specific case to:
 \begin{equation*}
\begin{array}{rlll}
\kappa(\mb{p}) & = &  \displaystyle \frac{\min\left(\sum_{i \in [n]} p_{i}-p_{n}\left(\sum_{i \in [n-1]} p_{i}\right),1\right)}{1-\prod_{i=1}^n (1-p_i)}.
\end{array}
\end{equation*}
Clearly $\kappa(\mb{p}) \leq \kappa_u(\mb{p})$. We next compare these two ratios.
\texitem{(a)} Worst-case analysis:
Assume the marginal probability vector is given by $\mb{p}=(1/n,\ldots,1/n)$. For the independent distribution, the probability is given by $1-(1-1/n)^n$, while the Boole union bound is equal to one (attained by the distribution which assigns probability $1/n$ to each of $n$ support points with $c_i = 1,\;c_j=0, \forall j \neq i$ (for each $i \in [n])$ and zero otherwise). In this case, the limit of the ratio as $n$ goes to infinity is given by:$$\displaystyle \lim_{n \to \infty}\kappa_u(\mb{p})  = \frac{1}{1-(1-1/n)^n} = \frac{e}{e-1} \approx 1.5819.$$ Likewise it is easy to verify that with pairwise independence:
 $$\displaystyle \lim_{n \to \infty}\kappa(\mb{p})  =  \frac{1-{1}/{n}\left(1-{1}/{n}\right)}{1-(1-1/n)^n} = \frac{e}{e-1} \approx 1.5819.$$
Thus in the worst-case, both these bounds attain the ratio $e/(e-1)$.
\texitem{(b)} Instances where the correlation gap can be improved:
On the other hand, Proposition \ref{prop:25} illustrates that for the probabilities $p_n = 1/2$ and $\sum_{i \in [n-1]} p_i = 1/2$, the pairwise independent bound is $3/4$ and the Boole union bound is one. For example with $n = 2$ where $\mb{p} = (1/2,1/2)$, the Boole union bound is one, while both the pairwise independent bound and the independent probability is equal to $3/4$. Then, we have $\kappa_u((1/2,1/2)) = 4/3$ while $\kappa((1/2,1/2)) = 1$. Thus in specific instances, the correlation gap can be tightened by considering pairwise independent random variables.
\end{example}
An application of the $4/3$ bound in Proposition \ref{prop:25} in the context of distributionally robust optimization is discussed next.
\begin{example} [Distributionally robust bottleneck combinatorial optimization] \label{ex:bottleneck}
Consider a set of $n$ elements indexed by $[n] = \{1,2,\ldots,n\}$ where element $i$ has a cost of $c_i$. Given a set of feasible solutions ${\cal X} \subseteq \{0,1\}^n$, the goal in the bottleneck combinatorial optimization problem is to find the solution $\mb{x} \in {\cal X}$ that minimizes the maximum cost among the selected elements (bottleneck cost). This is formulated as the bottleneck combinatorial optimization problem:
\begin{equation*}
\begin{array}{rlllll}
\displaystyle \min_{\mbs{x} \in {\cal X} \subseteq \{0,1\}^n} \max_{i \in [n]} c_ix_i.
\end{array}
\end{equation*}
A threshold algorithm to solve this class of problems was developed by Edmonds and Fulkerson \cite{Edmonds}. Consider a distributionally robust variant of this problem where the cost of the element $i$ is a random variable $\tilde{c}_i$ and the joint distribution of $\tilde{\mb{c}}$ is not fully specified.  The distributionally robust bottleneck optimization problem is formulated as:
\begin{equation*}
\begin{array}{rlllll}
\displaystyle \min_{\mbs{x} \in {\cal X} \subseteq \{0,1\}^n} {\max_{\theta \in \Theta}\mathbb{E}\left[\max_{i \in [n]}\tilde{c}_ix_i\right]},
\end{array}
\end{equation*}
where $\Theta$ is the set of possible joint distributions and the goal is to find the solution $\mb{x} \in {\cal X}$ that minimizes the maximum expected bottleneck cost. Such problems have been studied in Agrawal et al. \cite{Agrawal2012} where the distributions are specified up to marginal information and Xie et al. \cite{Xie} where the distributions are assumed to lie in a ball around an empirical distribution specified by the Wasserstein distance. Here we consider the set of distributions with pairwise independent random variables where $\Theta = \Theta(\mb{p},p_ip_j; (i,j) \in K_n)$. The next proposition provides a $4/3$-approximation algorithm for this problem.
\begin{prop} \label{prop:bottleneck}
Let $\mbox{OPT}$ be the optimal value of the distributionally robust bottleneck combinatorial optimization problem:
$$\displaystyle \mbox{OPT} = \min_{\mbs{x} \in {\cal X} \subseteq \{0,1\}^n} \underbrace{\max_{\theta \in \Theta(\mbs{p},p_ip_j; (i,j) \in K_n)}\mathbb{E}\left[\max_{i \in [n]}\tilde{c}_ix_i\right]}_{f(\mbs{x})}.$$
Suppose we can optimize linear functions over the set ${\cal X} \subseteq \{0,1\}^n$ in polynomial time. Then, we can find $\hat{\mb{x}}$ in polynomial time such that:  $$\displaystyle \mbox{OPT} \leq f(\hat{\mb{x}}) \leq \frac{4}{3}\mbox{OPT}.$$
\end{prop}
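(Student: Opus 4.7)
The plan is to show that the linear surrogate obtained by upper-bounding $f$ with the Boole union bound can be optimized exactly via the assumed oracle, and that Proposition \ref{prop:25} forces this surrogate to lose at most a factor $4/3$. Concretely, the algorithm will simply compute $\hat{\mb{x}} \in \argmin_{\mbs{x} \in {\cal X}} \sum_{i \in [n]} p_i x_i$, which by the hypothesis on ${\cal X}$ is available in polynomial time.

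First I would rewrite $f(\mb{x})$ as a union probability. Because both $\tilde{c}_i$ and $x_i$ are $\{0,1\}$-valued, $\max_{i \in [n]} \tilde{c}_i x_i = \one\bigl(\sum_{i : x_i = 1} \tilde{c}_i \geq 1\bigr)$, so writing $S(\mb{x}) = \{i : x_i = 1\}$ and letting $\mb{p}_{S(\mbs{x})}$ denote the corresponding marginal subvector we obtain
\begin{equation*}
f(\mb{x}) \;=\; \max_{\theta} \mathbb{P}_\theta\Bigl(\sum_{i \in S(\mbs{x})} \tilde{c}_i \geq 1\Bigr) \;=\; \overline{P}\bigl(|S(\mb{x})|,\,1,\,\mb{p}_{S(\mbs{x})}\bigr),
\end{equation*}
where $\theta$ ranges over the pairwise independent distributions with those marginals. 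Theorem \ref{thm:unionbound} supplies an explicit closed form for this quantity, but the argument does not need it; it uses only the cruder Boole upper envelope $f_u(\mb{x}) := \min\bigl(\sum_{i \in [n]} p_i x_i,\, 1\bigr) \geq f(\mb{x})$.

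Since $t \mapsto \min(t,1)$ is non-decreasing, the linear-oracle minimizer $\hat{\mb{x}}$ also minimizes $f_u$ over ${\cal X}$. Letting $\mb{x}^*$ attain OPT, chaining four inequalities then yields
\begin{equation*}
\mbox{OPT} \;\leq\; f(\hat{\mb{x}}) \;\leq\; f_u(\hat{\mb{x}}) \;\leq\; f_u(\mb{x}^*) \;\leq\; \tfrac{4}{3}\, f(\mb{x}^*) \;=\; \tfrac{4}{3}\,\mbox{OPT},
\end{equation*}
where step two is $f \leq f_u$ pointwise, step three is optimality of $\hat{\mb{x}}$ for $f_u$, and the decisive step four applies Proposition \ref{prop:25} to the restricted marginal subvector $\mb{p}_{S(\mbs{x}^*)}$.

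I do not anticipate a substantive obstacle. The reduction of the binary max to a union indicator is immediate, and Proposition \ref{prop:25} already does the heavy work of pinning down the tight constant $4/3$ between the pairwise independent bound and the Boole union bound; the only mild subtlety is that Proposition \ref{prop:25} is invoked on the subvector indexed by $S(\mb{x}^*)$ rather than on the full $\mb{p}$, but since the proposition is stated for every $\mb{p} \in [0,1]^n$ with $n \geq 2$ this causes no issue. If anything, the more delicate part is a conceptual one: one has to notice that the linear oracle never needs to evaluate $f$ itself (which involves the closed form of Theorem \ref{thm:unionbound} and would require choosing the largest marginal inside $S(\mb{x})$), because monotonicity of $\min(\cdot,1)$ already aligns the linear surrogate with $f_u$ on every feasible $\mb{x}$.
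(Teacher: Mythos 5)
Your proposal is correct and follows essentially the same route as the paper: upper-bound $f$ by the Boole union envelope $\min(1,\sum_i p_i x_i)$, minimize the linear sum with the oracle, use monotonicity of $\min(\cdot,1)$, and invoke Proposition \ref{prop:25} on the marginals restricted to the support of $\mb{x}^*$ (the paper writes this as the zero-padded vector $\mb{p}\cdot\mb{x}^*$ rather than the subvector, which is equivalent). The chain of inequalities you give matches the paper's ratio computation step for step.
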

\begin{proof}
When $\mb{x} \in {\cal X} \subseteq \{0,1\}^n$, each $\tilde{c}_ix_i$ is a Bernoulli random variable with $\mathbb{P}(\tilde{c}_{i}x_i = 1) = p_ix_i$. Using the Boole union bound, we have:
\begin{equation*}
\begin{array}{rlllll}
\displaystyle \max_{\theta \in \Theta(\mbs{p})}\mathbb{E}\left[\max_{i \in [n]}\tilde{c}_i{x}_i\right] = \min\left(1,\sum_{i \in [n]} p_ix_i\right).
\end{array}
\end{equation*}
Consider the solution $\hat{\mb{x}}$ which is computable in polynomial time by solving the minimum cost combinatorial optimization problem:
\begin{equation*}
\begin{array}{rlllll}
\displaystyle \hat{\mb{x}} \in \arg\min_{\mbs{x} \in {\cal X} \subseteq \{0,1\}^n} \sum_{i \in [n]} p_ix_i.
\end{array}
\end{equation*}
Let $\mb{x}^*$ denote the optimal solution and $\theta^*$ denote the worst-case pairwise independent distribution in OPT. Then we have:
\begin{equation*}
\begin{array}{rlll}
\displaystyle \frac{f(\hat{\mb{x}})}{\mbox{OPT}}& = & \displaystyle  \frac{\max_{\theta \in \Theta(\mbs{p},p_ip_j; (i,j) \in K_n)}\mathbb{E}\left[\max_{i \in [n]}\tilde{c}_i\hat{x}_i\right]}{ {\mathbb{E}_{\theta^*}\left[\max_{i \in [n]}\tilde{c}_ix_i^*\right]}}\\
& \leq & \displaystyle \frac{\max_{\theta \in \Theta(\mbs{p})}\mathbb{E}\left[\max_{i \in [n]}\tilde{c}_i\hat{x}_i\right]}{{\mathbb{E}_{\theta^*}\left[\max_{i \in [n]}\tilde{c}_ix_i^*\right]}}\\
& & [\mbox{since } \Theta(\mb{p},p_ip_j; (i,j) \in K_n) \subseteq \Theta(\mb{p})]\\
& = &\displaystyle \frac{\min\left(1,\sum_{i \in [n]}p_i\hat{x}_i\right)}{ {\mathbb{E}_{{\theta}^*}\left[\max_{i \in [n]}\tilde{c}_ix_i^*\right]}}\\
& \leq &\displaystyle \frac{\min\left(1,\sum_{i \in [n]}p_i{x}_i^*\right)}{ {\mathbb{E}_{{\theta}^*}\left[\max_{i \in [n]}\tilde{c}_ix_i^*\right]}}\\
&  & [\mbox{since } \mb{x}^* \mbox{ is only feasible for the sum objective}]\\
& = & \displaystyle \frac{{\overline{P}}_u(n,1,\mb{p}\cdot{\mb{x}}^*)}{{\overline{P}}(n,1,\mb{p}\cdot\mb{x}^*)}\\
&  & [\mbox{where } \mb{p}\cdot{\mb{x}}^* = (p_1x_1^*,\ldots,p_nx_n^*)]\\
& \leq & \frac{4}{3}\\
& & [\mbox{from Proposition \ref{prop:25}}].
\end{array}
\end{equation*}
\end{proof}
\end{example}
Proposition \ref{prop:bottleneck} can be applied to instances such as the bottleneck assignment, bottleneck matching problem and bottleneck shortest path problems and provides a $4/3$-approximation for these instances.
The next result shows that Theorem \ref{thm:unionbound} can be used to prove a tight lower bound on the probability of the intersection of pairwise independent events.
\subsection{Tight lower bound for $k = n$} \label{sec:intersectionbound}
\noindent Denote the tightest lower bound on the probability of the intersection of pairwise independent events by $\underline{P}(n,n,\mb{p})$.
Then,
\begin{equation*}
\begin{array}{lll}
\displaystyle \underline{P}(n,n,\mb{p}) =  \displaystyle \min_{\theta \in \Theta(\mbs{p},p_ip_j; (i,j) \in K_n)} \mathbb{P}_\theta\left( \sum_{i \in [n]} \tilde{c}_{i} =n\right).
\end{array}
\end{equation*}
\begin{corollary}\label{cor:intersection}
Sort the probabilities in increasing value as $0 \leq p_{1} \leq p_{2} \leq \ldots \leq p_{n} \leq 1$. Then,
\begin{equation}
 \label{eq:intersectionbound}
 \begin{array}{rlllll}
\displaystyle \underline{P}(n,n,\mb{p})  =  \displaystyle \max\left(p_{1}\left(\sum_{i=2}^{n} p_{i}-(n-2)\right),0\right).
\end{array}
\end{equation}
\end{corollary}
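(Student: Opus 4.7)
The plan is to reduce this corollary to Theorem \ref{thm:unionbound} via the complement trick. Define $\tilde{d}_i = 1 - \tilde{c}_i$ for each $i \in [n]$. The univariate marginals of $\tilde{\mb{d}}$ are $q_i = 1 - p_i$, and pairwise independence is preserved, since
\begin{equation*}
\mathbb{P}(\tilde{d}_i = 1, \tilde{d}_j = 1) = 1 - p_i - p_j + p_i p_j = (1-p_i)(1-p_j) = q_i q_j
\end{equation*}
whenever $\mathbb{P}(\tilde{c}_i = 1, \tilde{c}_j = 1) = p_i p_j$. Therefore the feasible set of joint distributions is in bijection with $\Theta(\mb{q}, q_i q_j ; (i,j) \in K_n)$, and the event $\sum_i \tilde{c}_i = n$ is precisely the event $\sum_i \tilde{d}_i = 0$, i.e.\ the complement of $\sum_i \tilde{d}_i \geq 1$. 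Hence
\begin{equation*}
\underline{P}(n,n,\mb{p}) \;=\; 1 - \overline{P}(n,1,\mb{q}).
\end{equation*}

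Next I would apply Theorem \ref{thm:unionbound} to $\mb{q}$. Since $p_1 \leq p_2 \leq \ldots \leq p_n$, the $q_i$'s satisfy $q_1 \geq q_2 \geq \ldots \geq q_n$, so when sorted in increasing order the largest entry is $q_1 = 1-p_1$ and the remaining $n-1$ entries sum to $\sum_{i=2}^n (1-p_i) = (n-1) - \sum_{i=2}^n p_i$. Substituting into \eqref{union1} gives
\begin{equation*}
\overline{P}(n,1,\mb{q}) \;=\; \min\!\left(\sum_{i=1}^n (1-p_i) - (1-p_1)\Bigl((n-1) - \sum_{i=2}^n p_i\Bigr),\; 1\right).
\end{equation*}

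The final step is routine algebra: expanding the non-trivial branch collapses to $1 - p_1\bigl(\sum_{i=2}^n p_i - (n-2)\bigr)$, so
\begin{equation*}
\underline{P}(n,n,\mb{p}) \;=\; 1 - \min\!\left(1 - p_1\Bigl(\sum_{i=2}^n p_i - (n-2)\Bigr),\; 1\right) \;=\; \max\!\left(p_1\Bigl(\sum_{i=2}^n p_i - (n-2)\Bigr),\; 0\right),
\end{equation*}
which is the claimed expression \eqref{eq:intersectionbound}. No step here is really an obstacle: the only thing to be careful about is that complementation reverses the sort order, so the ``largest probability'' appearing in Theorem \ref{thm:unionbound} when applied to $\mb{q}$ corresponds to the \emph{smallest} index $p_1$ in the original vector. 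An extremal pairwise independent distribution attaining the bound is obtained directly by complementing the distribution constructed in Table \ref{table:hwprobdist} (applied to $\mb{q}$).
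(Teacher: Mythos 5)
Your proposal is correct and follows essentially the same route as the paper: complementing the Bernoulli vector to turn the intersection bound into one minus the union bound for the complementary probabilities, then invoking Theorem \ref{thm:unionbound} and simplifying. You also correctly handle the one subtlety (complementation reverses the sort order, so the largest complementary probability is $1-p_1$), and your algebra matches the paper's.
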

\begin{proof}
The proof follows from that of the union probability bound in Theorem \ref{thm:unionbound}.
Define a complementary Bernoulli random variable $d_{i}=1-c_{n-i+1}, \;i \in [n]$, with transformed probabilities $\mathbb{P}(\tilde{d}_i = 1) = q_i=1-p_{n-i+1}, \; i \in [n]$ and thus $0 \leq q_{1} \leq q_{2} \leq \ldots \leq q_{n} \leq 1$.  We first note that the maximum probability of the union of pairwise independent events can be expressed as an equivalent maximization problem defined on $\mb{d}$ as follows:
\begin{equation}\label{eq:dual<=n-1}
\begin{array}{llllll}
\displaystyle \overline{P}(n,1,\mb{p}) =\displaystyle \overline{Q}(n,n-1,\mb{q}) = \displaystyle \max_{\theta \in \Theta(\mbs{q},q_iq_j; (i,j) \in K_n)} \mathbb{P}_\theta\left( \sum_{i \in [n]} \tilde{d}_{i} \leq n-1\right)
  \end{array}
\end{equation}
\noindent where $\overline{Q}(n,n-1,\mb{q})$ is the maximum probability that at most $n-1$ complimentary events occur.
\noindent The proof is then completed by noting that the tight lower intersection bound $\underline{P}(n,n,\mb{q})$ can be expressed as
\begin{equation*}
 \begin{array}{rlllll}
\displaystyle \underline{P}(n,n,\mb{q})  & =& 1-\overline{Q}(n,n-1,\mb{q}) \\
& =& 1-\overline{P}(n,1,\mb{p}) \\
& = & 1-
\min\left(\sum_{i \in [n]} p_{i}-p_{n}\left(\sum_{i \in [n-1]} p_{i}\right),1\right)\\
 & = & 1-\min\left(1-\left(1-p_{n}\right)\left(1-\sum_{i \in [n-1]} p_{i}\right),1\right)\\
& = &   \displaystyle \max\left(q_{1}\left(\sum_{i=2}^{n} q_{i}-(n-2)\right),0\right).
\end{array}
\end{equation*}
and replacing $\mb{q}$ by $\mb{p}$.
\end{proof}
\medskip
\noindent \textbf{Extremal Distribution}:
The primal distribution which attains the non-trivial part of the tight intersection bound $\underline{P}(n,n,\mb{q})$ is shown in Table \ref{table:comphwprobdist}. It can be constructed from the union probability extremal distribution $\theta^{\star}$ in Table \ref{table:hwprobdist} by flipping the zeros and one's of the support, reversing the bits (to ensure ordering of the transfomed probabilities) and retaining the same joint probabilities $\theta^{\star} (\mb{c})$ but expressed in terms of $\mb{q}$ instead of $\mb{p}$.
\begin{table}[H]
\footnotesize
\caption{Probabilities of $2^{n}$ scenarios.}\label{table:comphwprobdist}
\begin{center}
\begin{tabular}{llllllll}
\mbox{Scenarios} & $d_1$ & $d_2$ & \ldots & $d_{n-1}$ & $d_n$ &  \mbox{Probability} \\ \hline
  \multirow{2}*{$2^{n-1}$ $\begin{dcases*} \\ \\ \\ \end{dcases*}$}& 0 & 0 & \ldots &0 & 0 & $\theta(\mb{d})$\; \multirow{2}*{ $\begin{rcases*} \\ \\ \\ \end{rcases*} 1-q_1$} \\
& \vdots & \vdots &  \vdots & \vdots & \vdots & \vdots  \\
& 0& 1 & \ldots & 1 & 1 &  $\theta(\mb{d})$ \\
\multirow{2}*{$2^{n-1}-n$ $\begin{dcases*} \\ \\ \\ \end{dcases*}$} & 1 & 0 & \ldots &0 & 0 & $0$  \\
& \vdots & \vdots & \vdots & \vdots & \vdots & \vdots  \\
& 1& 1 & \ldots & 1 & 0 & $0$  \\
\multirow{4}*{$n-1$ $\begin{dcases*} \\ \\ \\ \end{dcases*}$} & 1 & 0 & \ldots & 1 & 1& $q_1(1-q_2)$ \\
& \vdots & \vdots & \vdots & \vdots & \vdots &  \vdots  \\
& 1 & 1 & \ldots & 0 & 1 & $q_1(1-q_{n-1})$   \\
& 1 & 1 & \ldots &1 & 0 & $q_{1}(1-q_n)$  \\
 \mbox{$1$ } &1 & 1 & \ldots & 1 & 1 &  $q_{1}\left(\sum_{i=2}^{n} q_{i}-(n-2)\right)$ \\
\end{tabular}
\end{center}
\end{table}

\noindent Note that the feasibility of the joint distribution in Table \ref{table:comphwprobdist} depends on the existence of nonnegative values $\theta(\mb{d})$ for the first $2^{n-1}$ scenarios or alternatively by conditioning on $d_1=0$, there exist nonnegative values of $\vartheta(\mb{d}_{-1}) = \mathbb{P}(\tilde{\mb{d}}_{-1} = \mb{d}_{-1})$ for all $\mb{d}_{-1} = (d_2\ldots,d_{n}) \in \{0,1\}^{n-1}$ such that:
\begin{equation}\label{eq:alterbivar}
\scalebox{0.95}{$
\begin{array}{rlll}
\displaystyle \sum_{\mbs{d}_{-1} \in \{0,1\}^{n-1}} \vartheta(\mb{d}_{-1}) & = &1, & \\
\displaystyle \sum_{\mbs{d}_{-1} \in \{0,1\}^{n-1}: d_i = 0} \vartheta(\mb{d}_{-1}) & = & \displaystyle 1-q_{i}, & \forall i \in [2,n], \\
\displaystyle \sum_{\mbs{d}_{-1} \in \{0,1\}^{n-1}: d_i = 0, d_j = 0} \vartheta(\mb{d}_{-1}) & = & \displaystyle \frac{(1-q_i)(1-q_j)}{1-q_{1}}, & \forall (i,j) \in \{(i,j): 2 \leq i < j \leq n\},
\end{array}$}
\end{equation}
\noindent where the constraints in \eqref{eq:alterbivar} is expressed in terms of non-occurence of the Bernoulli events represented by ${\mb{d}}$, \emph{i.e.} $d_i=0$ instead of $d_i=1$.
The existence of such a feasible bivariate distribution $\vartheta$  can be independently verified from Corollary \ref{cor:bivarfeasvariant} by noting that the Bernoulli random vector defined there satisfies
 $\mathbb{P}\left(\tilde{\mb{c}_i}=0\right)=1-p_i, \;  \forall i \in [n]$ and $\mathbb{P}\left(\tilde{\mb{c}_i}=0,\tilde{\mb{c}_j}=0\right)=(1-p_i)(1-p_j)/(1-p)$ for all $(i,j) \in K_{n}$,
subsequently replacing $p_i$ by $q_{i}$ and setting $p=q_1<=\min_{i \in [2,n]} q_i$ for $n-1$ variables instead of $n$. 

\subsubsection{Connection of Corollary \ref{cor:intersection} to existing results}
The intersection bound $\underline{P}(n,n,\mb{p})$ derived in Corollary \ref{cor:intersection} is zero when $\sum_{i=2}^n p_i \leq n-2$.  In related work with identical probabilities $p$, Benjamini et al. \cite{benjamini12} compute that the minimum intersection probability for $t$-wise independent Bernoulli random variables and identify when it is zero. They prove that $\underline{P}(n,n,p)=0$ for all $t <n$ and $p\leq 1/2$ which matches our result with pairwise independence ($t=2$) since $p \leq (n-2)/(n-1) \leq 1/2$ for all  $n\geq 3$.
We will show in Section \ref{subsec:idenBP} that with pairwise independent identical Bernoulli's, it is possible to derive closed-form tight upper and lower bounds on the intersection probability and more generally $\overline{P}(n,k,\mb{p})$ and $\underline{P}(n,k,\mb{p})$ for any $k \in [n]$.
 With arbitrary dependence among the Bernoulli random variables, the Fr\'{e}chet \cite{frechet1935} lower intersection bound is given as:
\begin{equation}\label{frechet}
\begin{array}{rlllll}
{\underline{P}}_u(n,n,\mb{p})  =   \min_{\theta \in \Theta(\mbs{p})} \mathbb{P}_\theta\left( \sum_{i \in [n]} \tilde{c}_{i} = n\right)   = \max\left(\sum_{i \in [n]} p_{i}-(n-1),0\right).
\end{array}
\end{equation}
Clearly, $\underline{P}(n,n,\mb{p}) \geq {\underline{P}}_u(n,1,\mb{p})$ and the lower bound is thus improved with pairwise independence.

\section{Improved bounds with non-identical marginals for $k\geq 2$} \label{sec:nonidenticalnewbounds}
In the previous section, we resolved the question of finding the tightest bound on the probability of the union of pairwise independent events. We now shift attention to the case of at least $k$ pairwise independent events occurring where $k \geq 2$. Deriving tight bounds for general $k$ appears to be challenging. We exploit the ordering of the probabilities to provide new upper bounds by creating feasible solutions to the dual linear program in \eqref{eq:generaldual}. We make use of the observation that all three bounds in \eqref{cheby}, \eqref{SSS} and \eqref{BorosPrekopa} can be expressed in terms of the first two aggregated (or equivalently binomial) moments of the sum of pairwise independent random variables with $S_{1} = \sum_i p_{i}$ and $S_{2}= \sum_{(i,j) \in K_n} p_{i}p_{j}$. The new ordered bounds improve on these three closed-form bounds. We will refer to the original bounds in \eqref{cheby}, \eqref{SSS} and \eqref{BorosPrekopa} as unordered bounds from this point onwards. The next theorem provides probability bounds for the sum of pairwise independent random variables with possibly non-identical marginals when $k \geq 2$.

\begin{theorem} \label{thm:nonidentical}
Sort the input probabilities in increasing order as $p_{1} \leq \ldots \leq p_{n}$. Define the partial binomial moment $S_{1r} = \sum_{i \in [n-r]}p_{i}$ for $r \in [0,n-1]$ and $S_{2r} =  \sum_{(i,j) \in K_{n-r}}p_{i}p_{j}$ for $r \in [0,n-2]$.
\texitem{(a)} The ordered Schmidt, Siegel and Srinivasan bound is a valid upper bound on $\overline{P}(n,k,\mb{p})$:
\begin{equation}
\begin{array}{rllll}   \label{POSB1}
\overline{P}(n,k,\mb{p})
\leq  \displaystyle
   \min\left(1,\min_{r_1 \in [0,k-1]} \left(\frac{S_{1r_1}}{k-r_1}\right), \min_{r_2 \in [0,k-2]}\left(\frac{S_{2r_2}}{\binom{k-r_2} {2}}\right)  \right),  \forall k \in [2,n].
\end{array}
\end{equation}
\texitem{(b)} The ordered Boros and Pr\'{e}kopa bound is a valid upper bound on $\overline{P}(n,k,\mb{p})$:
\begin{equation}\label{Prekopa3}
\begin{array}{llll}
 \overline{P}(n,k,\mb{p})  \leq  \displaystyle \underset{r \in [0,k-1]} {\min}  BP(n-r,k-r,\mb{p}),& \forall k \in [2,n],
\end{array}
\end{equation}
where:
\begin{equation*}
\begin{array}{lll}   \label{Prekopa2}
BP(n-r,k-r,\mb{p}) \\
=\begin{cases}
 1,   &  \displaystyle k< \frac{(n-r-1)S_{1r}-2S_{2r}}{n-r-S_{1r}} +r, \\
 &\\
\displaystyle \frac{(k+n-2r-1)S_{1r}-2S_{2r}}{(k-r)(n-r)} ,& \displaystyle \frac{(n-r-1)S_{1r}-2S_{2r}}{n-r-S_{1r}}+r \leq k <1+\frac{2S_{2r}}{S_{1r}}+r,\\
 &\\
\displaystyle \frac{(i-1)(i-2S_{1r})+2S_{2r}}{(k-r-i)^2+(k-r-i)} ,&\displaystyle k \geq 1+\frac{2S_{2r}}{S_{1r}}+r.\\
\end{cases}
\end{array}
\end{equation*}
and $i=\lceil{((k-r-1)S_{1r}-2S_{2r})}/({k-r-S_{1r}})\rceil$.
\texitem{(c)} The ordered Chebyshev bound is a valid upper bound on $\overline{P}(n,k,\mb{p})$:
\begin{equation}\label{chebyffinal}
\begin{array}{llll}
 \overline{P}(n,k,\mb{p})  \leq  \displaystyle \underset{r \in [0,k-1]} {\min}  CH(n-r,k-r,\mb{p}), \forall k \in [2,n],
\end{array}
\end{equation}
where:
\begin{equation*}
\begin{array}{lll}
CH(n-r,\; k-r,\;\mb{p}) = \begin{cases}
 1,   & k < S_{1r}+r,\\
 \displaystyle \frac{S_{1r}-(S_{1r}^2-2S_{2r})}{S_{1r}-(S_{1r}^2-2S_{2r}) +( k-r-S_{1r})^2} ,& S_{1r} +r \leq k \leq n.
  \end{cases}\\
\end{array}
\end{equation*}
\end{theorem}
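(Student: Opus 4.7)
The plan is to reduce each ordered bound to an application of the corresponding unordered bound on a smaller pairwise independent system. The starting observation is the following monotonicity: after sorting $p_1 \leq \cdots \leq p_n$, for any integer $r \in [0,k-1]$,
\begin{equation*}
\sum_{i \in [n]} \tilde{c}_i \geq k \;\Longrightarrow\; \sum_{i \in [n-r]} \tilde{c}_i \geq k-r,
\end{equation*}
because the $r$ removed variables contribute at most $r$ to the total sum. Consequently, under any joint distribution in $\Theta(\mb{p},p_ip_j;(i,j)\in K_n)$,
\begin{equation*}
\mathbb{P}_\theta\!\left(\sum_{i \in [n]} \tilde{c}_i \geq k\right) \;\leq\; \mathbb{P}_\theta\!\left(\sum_{i \in [n-r]} \tilde{c}_i \geq k-r\right).
\end{equation*}
Since pairwise independence is preserved under marginalization, the marginal distribution of $(\tilde{c}_1,\ldots,\tilde{c}_{n-r})$ lies in $\Theta(p_1,\ldots,p_{n-r}, p_ip_j;(i,j) \in K_{n-r})$, so the right-hand side is bounded above by $\overline{P}(n-r,k-r,(p_1,\ldots,p_{n-r}))$.

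Next I would compute the first two binomial moments of the reduced sum $\sum_{i \in [n-r]} \tilde{c}_i$ and observe, using pairwise independence of the retained variables, that
\begin{equation*}
\mathbb{E}\!\left[\sum_{i \in [n-r]} \tilde{c}_i\right] = S_{1r}, \qquad \mathbb{E}\!\left[\sum_{(i,j) \in K_{n-r}} \tilde{c}_i \tilde{c}_j\right] = S_{2r}.
\end{equation*}
Substituting $(n,k,S_1,S_2) \leftarrow (n-r,k-r,S_{1r},S_{2r})$ into the unordered Schmidt--Siegel--Srinivasan bound \eqref{SSS}, the unordered Boros--Pr\'ekopa bound \eqref{BorosPrekopa}, and the unordered Chebyshev bound \eqref{cheby} produces exactly the three per-$r$ expressions in parts (a), (b) and (c) of the theorem. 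Minimizing over the admissible range of $r$ then yields the stated ordered bounds. In part (a), the range $r_1 \in [0,k-1]$ comes from requiring $k-r_1 \geq 1$ in the $S_{1r_1}/(k-r_1)$ term, while $r_2 \in [0,k-2]$ comes from requiring $k-r_2 \geq 2$ so that $\binom{k-r_2}{2}$ is nonzero; in parts (b) and (c), the range $r \in [0,k-1]$ is the analogous non-degeneracy requirement.

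The proof therefore consists of (i) the monotonicity reduction, (ii) identifying $S_{1r}$ and $S_{2r}$ as the binomial moments of the truncated sum, and (iii) invoking the three known closed-form bounds on the reduced system. There is no substantive obstacle: the only mild subtlety is verifying that removing the \emph{largest} probabilities (rather than arbitrary ones) is what makes the minimization over $r$ nontrivially tighten the unordered bounds — each bound is increasing in the tail moments $S_{1r}$ and $S_{2r}$, so trimming the largest probabilities produces the most pronounced improvement. A brief numerical remark or reference to Section \ref{sec:tightinstances} can then confirm that strict improvement does occur in examples.
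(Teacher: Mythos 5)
Your proposal is correct and follows essentially the same route as the paper: truncate to the $n-r$ smallest-probability variables via the implication $\sum_{i\in[n]}c_i\geq k\Rightarrow\sum_{i\in[n-r]}c_i\geq k-r$, identify $S_{1r}$ and $S_{2r}$ as the first two binomial moments of the truncated sum using pairwise independence, apply the three unordered closed-form bounds to the reduced system, and minimize over $r$. The only cosmetic difference is that the paper phrases part (a) as a minimization over all subsets of size $n-r$ (equivalently, a family of dual feasible solutions) before concluding that the smallest probabilities are optimal, which you state directly.
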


\begin{proof}\
 \texitem{(a)} We observe that for any $r_1 \in [0,k-1]$ and any subset $S \subseteq [n]$ of the random variables of cardinality $n-r_1$, an upper bound is given by:
 \begin{equation*}  \label{OS2 derivationa}
\begin{array}{llll}
\displaystyle \mathbb{P}\left(\sum_{i \in [n]} \tilde{c}_i \geq k\right)  & \leq & \displaystyle \mathbb{P}\left(\sum_{i \in S} \tilde{c}_{i} \geq k-r_1\right) \\
&& [\mbox{since } \sum_{i \in [n]} {c}_i \geq k \mbox{ implies } \sum_{i \in S} {c}_{i} \geq k-r_1]\\
&\leq &\displaystyle \frac{\mathbb{E} \left[ \sum_{i \in S}\tilde{c}_{i} \right]}{k-r_1} \\
& &[\mbox{using Markov inequality}]\\
&=&\displaystyle \frac{\sum_{i \in S} p_i}{k-r_1}. &
\end{array}
\end{equation*}
The tightest upper bound of this form is obtained by minimizing over all $r_1 \in [0,k-1]$ and subsets $S \subseteq [n]$ with $|S| = n-r_1$:
\begin{equation}  \label{OS2 derivationaaaa}
\begin{array}{llll}
\displaystyle \mathbb{P}\left(\sum_{i \in [n]} \tilde{c}_i \geq k\right)  & \leq & \displaystyle \min_{r_1 \in [0,k-1]}\;\min_{S:|S| = n-r_1}\frac{\sum_{i \in S} p_i}{k-r_1}  & \\
 &=&\displaystyle \min_{r_1 \in [0,k-1]}\frac{\sum_{i \in [n-r_1]} p_{i}}{k-r_1}\\
 & &[\mbox{using the } n-r_1 \mbox{ smallest probabilities}].
\end{array}
\end{equation}
We derive the other term  in (\ref{POSB1}) using a similar approach while accounting for pairwise independence. For any $r_2 \in [0,k-2]$ and any subset $S \subseteq [n]$ of the random variables of cardinality $n-r_2$, an upper bound is given by:
\begin{equation*}  \label{OS2 derivation}
\begin{array}{llll}
\displaystyle \mathbb{P}\left(\sum_{i \in [n]} \tilde{c}_i \geq k\right)  & \leq & \displaystyle \mathbb{P}\left(\sum_{i \in S} \tilde{c}_{i} \geq k-r_2\right)  &\\
 &=&\displaystyle \mathbb{P}\left(\binom {\sum_{i\in S} \tilde{c}_{i}}{2} \geq  \binom{k-r_2}{2}\right)& \\
&\leq &\displaystyle \frac{\mathbb{E} \left[ \sum_{i \in S} \sum_{j \in S: j > i}\tilde{c}_{i}\tilde{c}_{j}  \right]}{\binom{k-r_2}{2}}& \\
& &[\mbox{using equation } \eqref{binomial} \mbox{ and Markov inequality}]\\
&=&\displaystyle \frac{\sum_{i \in S} \sum_{j \in S: j > i} \mathbb{E}[\tilde{c}_{i}]\mathbb{E}[\tilde{c}_{j}]}{\binom{k-r_2}{2}} & \\
& & [\mbox{using pairwise independence}]\\
&=&\displaystyle \frac{\sum_{i \in S} \sum_{j \in S: j > i}p_i p_j}{\binom{k-r_2}{2}}.&
\end{array}
\end{equation*}
The tightest upper bound of this form is obtained by minimizing over $r_2 \in [0,k-2]$ and all sets $S$ of size $n-r_2$. This gives:
\begin{equation}  \label{OS2 derivationa}
\begin{array}{llll}
\displaystyle \mathbb{P}\left(\sum_{i \in [n]} \tilde{c}_i \geq k\right)  & \leq & \displaystyle \min_{r_2 \in [0,k-2]}\;\min_{S:|S| = n-r_2}\frac{\sum_{i \in S} \sum_{j \in S: j > i}p_i p_j}{\binom{k-r_2}{2}}&\\
 &=&\displaystyle \min_{r_2 \in [0,k-2]}\left(\frac{\sum_{(i,j) \in K_{n-r_2}} p_{i}p_{j}}{\binom{k-r_2} {2}}\right)\\
  & &[\mbox{using the } n-r_2 \mbox{ smallest probabilities}].
\end{array}
\end{equation}
From the bounds \eqref{OS2 derivationaaaa} and  \eqref{OS2 derivationa}, we get:
\begin{equation*}
\begin{array}{rlll}   \label{OS2}
\overline{P}(n,k,\mb{p})
 \leq  \displaystyle
   \min\left(1,\min_{r_1 \in [0,k-1]} \left(\frac{S_{1r_1}}{k-r_1}\right), \min_{r_2 \in [0,k-2]}\left(\frac{S_{2r_2}}{\binom{k-r_2} {2}}\right)  \right),& \forall k \in [2,n],
\end{array}
\end{equation*}
where $S_{1r_1} = \sum_{i \in [n-r_1]}p_{i}$ for $r_1 \in [0,n-1]$ and $S_{2r_2} =  \sum_{(i,j) \in K_{n-r_2}} p_{i}p_{j}$ for $r_2 \in [0,n-2]$. One can interpret this bound as creating a set of dual feasible solutions and picking the best among them. The dual formulation is:
\begin{equation*}
\begin{array}{rlllll} \label{uniondual}
\displaystyle \overline{P}(n,k, \mb{p})  = \min & \displaystyle \sum_{(i,j) \in K_n} \lambda_{ij}p_{i}p_{j}+ \displaystyle \sum_{i \in [n]} \lambda_{i}p_{i} +\lambda_{0}  \\
 \mbox{s.t}  &\displaystyle \sum_{(i,j) \in K_n} \lambda_{ij}{c}_{i}{c}_{j}+ \displaystyle \sum_{i \in [n]} \lambda_{i}{c}_{i} +\lambda_{0} \geq 0 & \forall \mb{c} \in \{0,1\}^n,\\
 &\displaystyle \sum_{(i,j) \in K_n} \lambda_{ij}{c}_{i}{c}_{j}+ \displaystyle \sum_{i \in [n]} \lambda_{i}{c}_{i} +\lambda_{0} \geq 1, & \forall \mb{c} \in \{0,1\}^n: \sum_{t} {c}_{t} \geq k.
\end{array}
\end{equation*}
The components of the second term in \eqref{POSB1} are obtained by choosing dual feasible solutions with $\lambda_i = 1/(k-r_1)$  for $i \in [n-r_1]$ and setting all other dual variables to $0$. Similarly, the components of the third term are obtained by choosing dual feasible solutions with $\lambda_{ij} = 1/\binom{k-r_2} {2}$ for $(i,j) \in K_{n-r_2}$ and setting all other dual variables to $0$.
\texitem{(b)} The bound in \eqref{Prekopa3} is obtained by using the inequality:
\begin{equation*}
\begin{array}{lllll}
\displaystyle\mathbb{P}\left(\sum_{i \in [n]} \tilde{c}_i \geq k\right)   \leq   \displaystyle \mathbb{P}\left(\sum_{i \in [n-r]} \tilde{c}_{i} \geq k-r\right),  &   \forall r \in [0,k-1],
 \end{array}
\end{equation*}
in conjunction with the bound in \eqref{BorosPrekopa} computed from Boros and Pr\'{e}kopa \cite{boros1989}. We compute an upper bound on $\mathbb{P}\left(\sum_{i \in [n-r]} \tilde{c}_{i} \geq k-r\right)$ by using the aggregated moments $S_{1r}$ and $S_{2r}$ with the Boros and Pr\'{e}kopa bound from \eqref{BorosPrekopa} as follows:
\begin{equation*}
\begin{array}{lll}   \label{Prekopa2again}
BP(n-r,k-r,\mb{p}) \\
= \begin{cases}
 1,   &  \displaystyle k< \frac{(n-r-1)S_{1r}-2S_{2r}}{n-r-S_{1r}} +r, \\
 &\\
\displaystyle \frac{(k+n-2r-1)S_{1r}-2S_{2r}}{(k-r)(n-r)} ,& \displaystyle \frac{(n-r-1)S_{1r}-2S_{2r}}{n-r-S_{1r}}+r \leq k <1+\frac{2S_{2r}}{S_{1r}}+r,\\
 &\\
\displaystyle \frac{(i-1)(i-2S_{1r})+2S_{2r}}{(k-r-i)^2+(k-r-i)} ,&\displaystyle k \geq 1+\frac{2S_{2r}}{S_{1r}}+r, 
\end{cases}
\end{array}
\end{equation*}
where $i=\lceil{((k-r-1)S_{1r}-2S_{2r})}/({k-r-S_{1r}})\rceil$. Since the relation $\overline{P}(n,k,\mb{p}) \leq BP(n-r,k-r,\mb{p})$ is satisfied for every $0 \leq r \leq k-1$, the best upper bound on $\overline{P}(n,k,\mb{p})$ is obtained by taking the minimum over all possible values of $r$:
\begin{equation*}
\begin{array}{llll}
\displaystyle   \overline{P}(n,k,\mb{p})  \leq  \displaystyle {\min}_{r \in [0,k-1]} BP(n-r,k-r,\mb{p}),& \forall k \in [2,n].
\end{array}
\end{equation*}
\texitem{(c)} Proceeding in a similar manner as in (b), by using the aggregated moments $S_{1r}$ and $S_{2r}$ with Chebyshev bound, the upper bound for a given $r \in [0,k-1]$ can be written as follows:
\begin{equation*}
\begin{array}{lll}
CH(n-r,\; k-r,\;\mb{p}) = \begin{cases}
 1,   & k < S_{1r}+r,\\
 \displaystyle \frac{S_{1r}-(S_{1r}^2-2S_{2r})}{S_{1r}-(S_{1r}^2-2S_{2r}) +( k-r-S_{1r})^2} ,& S_{1r} +r \leq k \leq n.
  \end{cases}
\end{array}
\end{equation*}
The best upper bound on $\overline{P}(n,k,\mb{p})$ is obtained by taking the minimum over all possible values of $r$:
\begin{equation*}
\begin{array}{llll}
 \overline{P}(n,k,\mb{p})  \leq  \displaystyle \underset{r \in [0,k-1]} {\min}  CH(n-r,k-r,\mb{p}),& \forall k \in [2,n].
\end{array}
\end{equation*}
\end{proof}

\subsection{Connection to existing results}  Prior work in R\"{u}ger \cite{ruger1978} shows that ordering of probabilities provides the tightest upper bound on the probability of $n$ Bernoulli random variables adding up to at least $k$, when allowing for arbitrary dependence. Specifically, the bound derived there is:
$$\displaystyle  {\overline{P}}_u(n,k,\mb{p})  = \displaystyle \max_{\theta \in \Theta(\mbs{p})} \mathbb{P}_\theta\left( \sum_{i \in [n]} \tilde{c}_{i} \geq k\right)  = \min\left(1,\min_{r \in [0,k-1]} \left(\frac{S_{1r}}{k-r}\right) \right).$$
However, this bound does not use pairwise independence information. Part (a) of Theorem \ref{thm:nonidentical} tightens the analysis in R\"{u}ger \cite{ruger1978} for pairwise independent random variables. It is also straightforward to see that the ordered Schmidt, Siegel and Srinivasan bound in \eqref{POSB1} is at least as good as the bound in \eqref{SSS} (simply plug in $r = 0$). Building on the ordering of probabilities, the bound in \eqref{Prekopa3} uses aggregated binomial moments for $k$ ordered sets of random variables of size $n-r$ where $r \in [0,k-1]$. When $r = 0$, the bound in \eqref{Prekopa3} reduces to the original aggregated moment bound of Boros and Pr\'{e}kopa in \eqref{BorosPrekopa}
and hence this bound is at least as tight. All the bounds in Theorem \ref{thm:nonidentical} are clearly efficiently computable.

\noindent It is easy to verify that the ordered Boros and Pr\'{e}kopa bound is at least as good as the other two ordered bounds, \emph{i.e.},
\begin{equation*}
\begin{array}{lllllllll}
 \displaystyle \mbox{Ordered bound }  \eqref{Prekopa3}  \leq  \displaystyle \min\left(\mbox{Ordered bound } \eqref{POSB1},\mbox{Ordered bound }\eqref{chebyffinal}\right).
\end{array}
\end{equation*}
This is true since, each term of the ordered bounds are derived by finding upper bounds on the probability that the sum of the first $n-r$ random variables takes a value of at least $k-r$ using only the first two moments of the sum of these random variables. Since the Boros and Pr\'{e}kopa bound is the tightest upper bound possible when using only the first two moments of the sum, each term in the ordered Boros and Pr\'{e}kopa bound is at least as good as the corresponding term in the other two ordered bounds. Taking the minimum over all these terms implies that the ordered Boros and Pr\'{e}kopa bound must be at least as good as the other two bounds.
\subsection{Further tightening of ordered bounds:}\label{subsec:furthertight}
\noindent It is also worth mentioning that the bounds in Theorem \ref{thm:nonidentical} can in fact be strengthened further by using the tightest possible bound for $k = 1$ from Theorem \ref{thm:unionbound}. Specifically, we can tighten the ordered Schmidt, Siegel and Srinivasan bound in \eqref{POSB1} as follows:
\begin{equation*}
\begin{array}{rllll}
\displaystyle
   \min\left(1,\min_{r \in [0,k-2]}\min\left(\frac{S_{1r}}{k-r},\frac{S_{2r}}{\binom{k-r} {2}}\right),\sum_{i \in [n-k+1]}p_i-p_{n-k+1}\sum_{i \in [n-k]}p_i  \right).
\end{array}
\end{equation*}
where the last term corresponds to $r_1=k-1$ and is obtained by observing that:
\begin{equation*}
\begin{array}{llll}
\displaystyle \mathbb{P}\left(\sum_{i \in [n]} \tilde{c}_i \geq k\right)  & \leq & \displaystyle \mathbb{P}\left(\sum_{i \in [n-k+1]} \tilde{c}_{i} \geq 1\right) \\
& \leq & \displaystyle \sum_{i \in [n-k+1]}p_i-p_{n-k+1}\sum_{i \in [n-k]}p_i \\
&& [\mbox{from Theorem }\ref{thm:unionbound}]\\
& \leq &  \sum_{i \in [n-k+1]}p_i \\
& = & S_{1(k-1)}/(k-(k-1)).
\end{array}
\end{equation*}
The Boros and Pr\'{e}kopa bound and Chebyshev ordered bounds in   \eqref{Prekopa3}  and \eqref{chebyffinal} can be similarly tightened. Unlike the bounds in Theorem \ref{thm:nonidentical}, these tightened bounds use partially disaggregated moment information. We next provide two numerical examples to illustrate the impact of ordering on the quality of the three bounds. We restrict attention, however, to the aggregated ordered moment bounds in Theorem \ref{thm:nonidentical} only.
\subsection{Numerical illustrations}\label{subsec:numericals_noniden}
\begin{example} [Non-identical marginals] \ Consider an example with $n = 12$ random variables with the probabilities given by
\begin{equation*}
\begin{array}{lllllllll}
p_1= 0.0651, p_2= 0.0977, p_3 = 0.1220, p_4 = 0.1705, p_5= 0.3046, p_6 = 0.4402,\\
p_7 = 0.4952, p_8 = 0.6075, p_9 = 0.6842, p_{10} = 0.8084, p_{11} = 0.9489, p_{12} = 0.9656.
\end{array}
\end{equation*} Table \ref{tab:heteroall3vstight} compares the three ordered bounds with the three unordered bounds and the tight upper bound. Numerically, the ordered Boros and Pr\'{e}kopa bound \eqref{Prekopa3} is found to be tight in this example for $k=7,8,9,12$ while the ordered Schmidt, Siegel and Srinivasan bound \eqref{POSB1} is tight for $k=12$. The ordered Boros and Pr\'{e}kopa bound is uniformly the best performing of the three bounds, while among the other two ordered bounds, none uniformly dominates the other. For example, comparing the ordered bounds when $7 \leq k \leq 9$, the Chebyshev bound outperforms the Schmidt, Siegel and Srinivasan bound, but when $k=6$ or $10 \leq k \leq 12$, the Schmidt, Siegel and Srinivasan bound does better. Comparing the unordered bounds when $7 \leq k \leq 9$, the Schmidt, Siegel and Srinivasan bound \eqref{SSS} outperforms the Chebyshev bound \eqref{cheby} when $k=6$ but for all $k\geq 7$, bound \eqref{cheby} does better. In terms of absolute difference between ordered and unordered bounds, ordering provides the maximum improvement to the Schmidt, Siegel and Srinivasan bound, followed by the Boros and Pr\'{e}kopa bound and the Chebyshev bound.\\

\begin{table}[H]
\footnotesize
\caption{Upper bound on the probability of sum of random variables equaling at least $k$ for $n = 12$. For each value of $k$, the bottom row provides the tightest bound which can be computed in this example by solving an exponential sized linear program. The underlined instances illustrate cases when the other upper bounds are tight.} \label{tab:heteroall3vstight}
\begin{center}
\footnotesize{\begin{tabular}
{|l|c|c|l|l|l|l|l|l|l|}
  \hline
\mbox{Bound} & $k \in [1,4]$ & $k = 5$ & $k = 6$ & $k = 7$ & $k = 8$ & $k = 9$ & $k = 10$ & $k = 11$ & $k = 12$\\ \hline
\eqref{cheby}   & 1 & 1 &  0.9553 &  0.5192 & 0.2552 & 0.1424 & 0.0889 & 0.0603 & 0.0434 \\
 \hline
\eqref{chebyffinal} & 1 & 1 &  0.9553 &  0.5192 & 0.2552 & 0.1424 & 0.0883 & 0.0549 & 0.0307 \\
 \hline
  \eqref{SSS}&
   1&1&   0.9517 & 0.6831 & 0.5123& 0.3985 & 0.3188 &  0.2608 & 0.2173\\
 \hline
  \eqref{POSB1}  &
   1&1&   0.9489 & 0.6162 & 0.3620& 0.1827 & 0.0712 &  0.0250 & \underline{0.0064}\\
 \hline
  \eqref{BorosPrekopa} &
 1 & 1 &  0.9497 &  \underline{0.5018} & \underline{0.2509} & 0.1326 & 0.0795 & 0.0530 & 0.0379\\
 \hline
 \eqref{Prekopa3} &
 1 & 1 &  0.9254 &  \underline{0.5018} & \underline{0.2509} & \underline{0.1290} & 0.0712 & 0.0249 & \underline{0.0064}\\
  \hline
 \mbox{Tight}  &
   \centering {1} &   0.9957 & 0.8931 & 0.5018 & 0.2509 & 0.1290 & 0.0692 & 0.0230 &  0.0064 \\
  \hline
\end{tabular}}
\end{center}
\end{table}
\end{example}
\begin{example} [Non-identical marginals]
In this example, we numerically compute the improvement of the new ordered bounds over the unordered bounds for $n=100$ variables by creating $500$ instances by randomly generating the probabilities $\mb {p}=(p_1,p_2,..,p_{100})$. First, we consider small marginal probabilities by uniformly and independently generating the entries of $\mb{p}$ between $0.01$ and $0.05$. When $k=n$, Figure \ref{fig:hetero3a} plots the three ordered bounds while Figure \ref{fig:hetero3b} shows the percentage improvement of the three bounds over their unordered counterparts. The percentage improvement is computed as \big([unordered-ordered]/unordered\big)$\times$ 100$\%$. In this example with small marginals, the ordered Schmidt, Siegel and Srinivasan bound \eqref{POSB1} is equal to the ordered Boros and Pr\'{e}kopa bound \eqref{Prekopa3} as seen in Figure \ref{fig:hetero3a}.  Ordering tends to improve the Schmidt, Siegel and Srinivasan bound significantly for smaller probabilities, since both the partial binomial moment terms $S_{1r}$ and $S_{2r}$ are smaller with smaller marginal probabilities for all $r \in [0,k-1]$.

\begin{figure}[htbp]
\begin{subfigure}[b]{0.45\linewidth}
\includegraphics[scale=0.4]{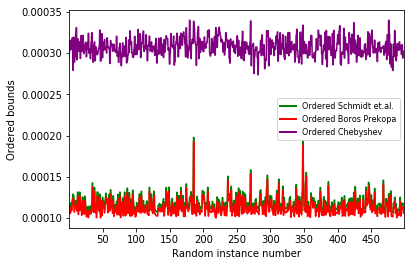}
\caption{Actual value of the ordered bounds}
\footnotesize \label{fig:hetero3a}
\end{subfigure}
\begin{subfigure}[b]{0.45\linewidth}
\includegraphics[scale=0.4]{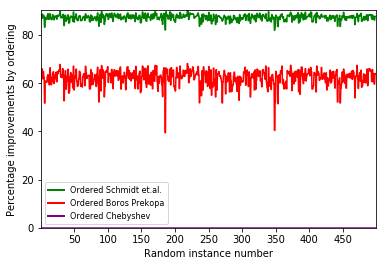}
\caption{Percentage improvement of ordered bounds}
\footnotesize \label{fig:hetero3b}
\end{subfigure}%
\caption{Smaller marginal probabilities $p_i$ with $n=100,k=100$ and $500$ instances.}
\footnotesize \label{fig:hetero3c}
\end{figure}
 The percentage improvement due to ordering in figure \ref{fig:hetero3b} is consistently above $80\%$ for the Schmidt, Siegel and Srinivasan bound, while that of the Boros and Pr\'{e}kopa bound is around $60 \%$. The ordered Chebyshev bound \eqref{chebyffinal} shows an almost negligible improvement by ordering in this example.

Next, we consider similar plots when $k=n-1$ with larger marginal probabilities. The entries of $\mb{p}$ are generated uniformly and independently between $0.05$ and $0.99$.
\begin{figure}[htbp]
\begin{subfigure}[b]{0.45\linewidth}
\includegraphics[scale=0.4]{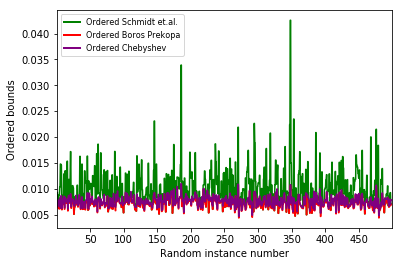}
\caption{Actual value of the ordered bounds}
\label{fig:hetero4a}
\end{subfigure}
\begin{subfigure}[b]{0.45\linewidth}
\includegraphics[scale=0.4]{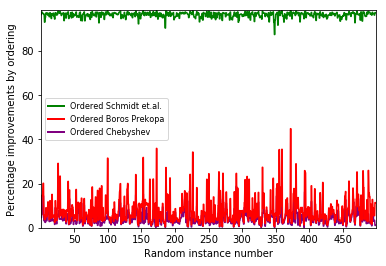}
\caption{Percentage improvement of ordered bounds}
\label{fig:hetero4b}
\end{subfigure}%
\caption{Larger marginal probabilities $p_i$ with $n=100,k=99$ and $500$ instances.}
\label{fig:hetero4c}
\end{figure}
In Figure \ref{fig:hetero4a}, the ordered Chebyshev bound \eqref{chebyffinal} performs better than the ordered Schmidt, Siegel and Srinivasan bound \eqref{POSB1}. In Figure \ref{fig:hetero4b}, the  percentage improvement due to ordering is again most significant for the Schmidt, Siegel and Srinivasan bound, being consistently above $90\%$ while that of the Boros and Pr\'{e}kopa bound is less than $40 \%$ and that of the Chebyshev bound is less than $20 \%$.
It is also clear from Figures \ref{fig:hetero3c} and \ref{fig:hetero4c} that the ordered Boros and Pr\'{e}kopa bound \eqref{Prekopa3} is the tightest of the three bounds across the instances, while among the other two bounds, none uniformly dominates the other.
\end{example}

\section{Tightness in special cases}\label{sec:tightinstances}
In this section, we identify two tight instances, one for the unordered bounds in \eqref{cheby}, \eqref{SSS} and \eqref{BorosPrekopa} and the other for the corresponding ordered bounds derived in Theorem \ref{thm:nonidentical}. Firstly, in Section \ref{subsec:idenBP}, for identical variables, the symmetry in the problem allows for closed-form tight bounds for any $k \in [2,n]$. We prove this by showing an equivalence of the exponential sized linear program \eqref{eq:generalprimal} which computes the exact bound with a polynomial sized linear program analyzed in computing the Boros and Pr\'{e}kopa bound in \eqref{BorosPrekopa}. We use the exact bound to identify instances when the other two unordered bounds are tight. The result with identical marginals is further extended to show tightness for $t$-wise independent variables. Secondly, in Section \ref{subsec:almostidentight}, we demonstrate the usefulness of the ordered bounds by identifying a special case when $n-1$ marginals are identical (with additional conditions on the probability and $k$), when the ordered bounds in \eqref{POSB1} and \eqref{Prekopa3} are tight.

\subsection{Tightness of bounds with identical marginals} \label{subsec:idenBP}
In this section, we provide probability bounds for $n$ pairwise independent random variables adding up to at least $k \in [2,n]$ when their marginals are identical.
The next theorem provides the tight bound with identical marginals, by applying the Boros and Pr\'{e}kopa bound in \eqref{BorosPrekopa} to pairwise independent variables with $\tilde{\xi} =\sum_{i \in [n]}{\tilde{c}}_i$.
\begin{theorem} \label{thm:idenBPtight}
Assume $p_{i}=p \in (0,1)$ for $i \in [n]$. Let $\overline{P}(n,k,p)$ represent the tightest upper bound on the probability that $n$ pairwise independent identical Bernoulli random variables add up to at least $k \in [n]$. Then,
\begin{equation}
 \label{BP>=iden}
\overline{P}(n,k,p) = \left\{
\begin{array}{lll}
 1,   &  k<(n-1)p, & \mbox{(a)}\\
 \displaystyle\frac{((n-1)(1-p)+k)p}{k} ,&(n-1)p \leq k < 1+(n-1)p, & \mbox{(b)} \\
\displaystyle\frac{(i-1)(i-2np)+n(n-1)p^2}{(k-i)^2+(k-i)} ,& k \geq 1+(n-1)p,& \mbox{(c)},
 \end{array}
 \right.
\end{equation}
where $i=\lceil{np(k-1-(n-1)p)}/{(k-np)}\rceil$.
\end{theorem}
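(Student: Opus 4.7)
The strategy has two parts: an upper bound via reduction to the Boros--Pr\'ekopa inequality, and a matching lower bound via a symmetrization argument that collapses the exponential LP \eqref{eq:generalprimal} to the polynomial-size LP on binomial moments.

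For the upper bound, note that with identical marginals $p$ and pairwise independence, the first two binomial moments of $\tilde\xi = \sum_{i \in [n]}\tilde c_i$ are $S_1 = \mathbb{E}[\tilde\xi] = np$ and $S_2 = \mathbb{E}[\binom{\tilde\xi}{2}] = \binom{n}{2}p^2$. Plugging these into \eqref{BorosPrekopa} gives an upper bound on $\mathbb{P}(\tilde\xi \geq k)$ that matches the right-hand side of \eqref{BP>=iden}: the threshold $((n-1)S_1-2S_2)/(n-S_1)$ simplifies to $(n-1)p$, the threshold $1+2S_2/S_1$ simplifies to $1+(n-1)p$, the middle case expression reduces to $((n-1)(1-p)+k)p/k$, and the third case and the ceiling expression for $i$ reduce as stated. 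This is routine algebraic verification.

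For tightness, I would first observe that $\overline{P}(n,k,p)$ is attained by some optimal $\theta^* \in \Theta(\mb{p},p^2;(i,j)\in K_n)$ since the feasible set is a nonempty compact polytope. Define the symmetrization $\bar\theta(\mb c) = \frac{1}{n!}\sum_{\pi\in S_n}\theta^*(\pi\cdot\mb c)$. Since all marginal probabilities are equal to $p$, all bivariate probabilities are equal to $p^2$, and the objective $\mathbb{P}(\sum_i\tilde c_i \geq k)$ is coordinate-symmetric, $\bar\theta$ is feasible with the same objective value as $\theta^*$. Hence we may assume the optimum is attained by an exchangeable distribution. For any exchangeable $\theta$, the value $\theta(\mb c)$ depends only on the Hamming weight $j = \sum_i c_i$; writing $\omega_j = \mathbb{P}(\tilde\xi = j)$ we have $\theta(\mb c) = \omega_j/\binom{n}{j}$. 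A direct count shows the marginal and pairwise constraints become, respectively, $\sum_j j\omega_j = np$ and $\sum_j \binom{j}{2}\omega_j = \binom{n}{2}p^2$, while the objective becomes $\sum_{j \geq k}\omega_j$.

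Consequently the exponential LP \eqref{eq:generalprimal} restricted to identical marginals and pairwise independence is equivalent to the $(n+1)$-variable LP
\begin{equation*}
\max\;\sum_{j \geq k}\omega_j \quad\text{s.t.}\quad \sum_j \omega_j = 1,\; \sum_j j\omega_j = np,\; \sum_j \binom{j}{2}\omega_j = \binom{n}{2}p^2,\; \omega_j \geq 0,
\end{equation*}
whose optimal value was computed in closed form by \cite{boros1989} and recorded in \eqref{BorosPrekopa}. Specialising that formula to $S_1=np$ and $S_2=\binom{n}{2}p^2$ reproduces \eqref{BP>=iden}. An extremal distribution attaining the bound can be constructed by taking any optimal $\omega^*$ from the reduced LP (it has at most three nonzero entries, corresponding to the three equality constraints) and lifting it via $\theta(\mb c) = \omega^*_{\sum_i c_i}/\binom{n}{\sum_i c_i}$, which by construction is exchangeable, feasible, and achieves the bound.

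The main obstacle is the symmetrization-plus-bijection step: one must verify that averaging over permutations preserves feasibility (this uses the fact that the constraint set is permutation-invariant precisely because the marginals coincide and the pairwise constraints take the common value $p^2$) and that the bijection between exchangeable distributions on $\{0,1\}^n$ and distributions on $[0,n]$ sends the pairwise-independence constraint exactly to the second binomial moment constraint. Once these two facts are in place the reduction to \cite{boros1989} is immediate, and the remaining work is algebraic simplification.
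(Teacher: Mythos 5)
Your proposal is correct and follows essentially the same route as the paper: both reduce the exponential LP \eqref{eq:generalprimal} to the $(n+1)$-variable binomial-moment LP (your symmetrization-plus-bijection argument and the paper's two-sided aggregation/even-distribution argument are just two phrasings of the same equivalence), and both then invoke the closed form of \cite{boros1989}. The only cosmetic difference is that the paper additionally writes out explicit extremal distributions for cases (a)--(c), whereas you obtain an extremal distribution abstractly by lifting an optimal basic solution of the reduced LP, which suffices.
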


\begin{proof} The tightest upper bound $ \overline{P}(n,k,p)$ is the optimal value of the linear program:
\begin{equation}
\begin{array}{rlllll} \label{>=kpweight1}
\displaystyle \overline{P}(n,k,p)  = \max & \displaystyle \sum_{\mbs{c} \in \{0,1\}^n: \sum_{i} {c}_{i} \geq k} \theta(\mb{c}) \\
 \mbox{s.t}  &\displaystyle  \sum_{\mbs{c} \in \{0,1\}^n} \theta(\mb{c})= 1,   \\
 & \displaystyle \sum_{\mbs{c} \in \{0,1\}^n: c_i = 1} \theta(\mb{c})= p , & \forall i \in [n], \\
 & \displaystyle \sum_{\mbs{c} \in \{0,1\}^n: c_i = 1, c_j = 1}\theta(\mb{c})= p^2, & \forall (i,j) \in K_n,  \\
  & \displaystyle \theta(\mb{c}) \geq 0, & \forall \mb{c} \in \{0,1\}^n,
\end{array}
\end{equation}
where the decision variables are the joint probabilities $\theta(\mb{c}) = \mathbb{P}(\tilde{\mb{c}} = \mb{c})$ for $\mb{c} \in \{0,1\}^n$. Consider the following linear program in $n+1$ variables which provides an upper bound on $\overline{P}(n,k,p)$:
\begin{equation}
\begin{array}{rllll} \label{BP>=}
\displaystyle BP(n,k,p)  = \max & \displaystyle \sum_{\ell \in [k,n]}  v_{\ell}\\
\mbox{s.t.}  & \displaystyle \sum_{\ell \in [0,n]} v_{\ell}= 1,  \\
& \displaystyle \sum_{\ell \in [1,n]} \ell v_{\ell}=np,  \\
 & \displaystyle\sum_{\ell \in [2,n]} \binom{\ell}{2}v_{\ell} =\binom{n}{2}p^2, \\
  & \displaystyle v_{\ell} \geq 0, & \forall \ell \in [0,n],
\end{array}
\end{equation}
 where the decision variables are the probabilities $v_{\ell}=\mathbb{P}(\sum_{i \in [n]} \tilde{c}_i = \ell)$ for $l \in [0,n]$. Linear programs of the form (\ref{BP>=}) have been studied in Boros and Pr\'{e}kopa \cite{boros1989} in the context of aggregated binomial moment problems. As we shall see, these two formulations are equivalent with identical pairwise independent random variables.
\texitem{(1)} $\overline{P}(n,k,p) \leq BP(n,k,p)$: Given a feasible solution to \eqref{>=kpweight1} denoted by $\theta$, construct a feasible solution to the linear program \eqref{BP>=} as:
\begin{align*}
v_{\ell}=\sum_{\mbs{c} \in \{0,1\}^n: \sum_{i} {c}_{i} = l} \theta(\mb{c}), \quad \forall l \in [0,n].
\end{align*}
By taking expectations on both sides of the equality \eqref{binomial}, we get:
\begin{align*}
\displaystyle \sum_{l \in [j,n]} \binom {l}{j} \mathbb{P}\left(\sum_{i \in [n]}\tilde{c}_i = l\right)=\mathbb{E}\left[S_{j}(\tilde{\mb{c}})\right],\quad \forall j \in [0,n].
\end{align*}
Applying it for $j= 0,1,2$, we get the three equality constraints in \eqref{BP>=}:
\begin{equation*}
\begin{array}{llll}
\displaystyle \sum_{\ell \in [0,n]} v_{\ell} = 1,\\
\displaystyle \sum_{\ell \in [1,n]} \ell v_{\ell}=\mathbb{E}\left[\displaystyle \sum_{i \in [n]} \tilde{c}_{i}\right] = np,\\
\displaystyle \sum_{\ell \in [2,n]} \binom{\ell}{2}v_{\ell}=\mathbb{E}\left[\displaystyle \sum_{(i,j) \in K_n} \tilde{c}_{i}\tilde{c}_{j}\right] = n(n-1)p^2/2.
\end{array}
\end{equation*}
Lastly, the objective function value of this feasible solution satisfies:
\begin{equation*}
\begin{array}{llll}
\displaystyle \sum_{\ell=k}^{n}v_{\ell} & = & \displaystyle \sum_{\ell=k}^{n}\; \sum_{\;\mbs{c} \in \{0,1\}^n: \sum_{i} {c}_{i} = l} \theta(\mb{c})\\
& = & \displaystyle \sum_{\mbs{c} \in \{0,1\}^n: \sum_{i} {c}_{i} \geq k} \theta(\mb{c}).
\end{array}
\end{equation*}
Hence, $\overline{P}(n,k,p) \leq BP(n,k,p)$.
\texitem{(2)} $\overline{P}(n,k,p) \geq BP(n,k,p)$: Given an optimal solution to \eqref{BP>=} denoted by $\mb{v}$, construct a feasible solution to the linear program  \eqref{>=kpweight1} by distributing $v_{\ell}$ equally among all the realizations in $\{0,1\}^n$ with exactly $\ell$ ones:
\begin{equation*}
\begin{array}{lll}
\displaystyle \theta(\mb{c})= \frac{v_{\ell}}{\binom{n}{\ell}}, & \forall \mb{c} \in \{0,1\}^n: \sum_{i \in [n]}  {c}_i =\ell, \forall \ell \in [0,n].
 \end{array}
\end{equation*}
The first constraint in \eqref{>=kpweight1} is satisfied since:
\begin{equation*}
\begin{array}{llll}
\displaystyle \sum_{\mbs{c} \in \{0,1\}^n} \theta(\mb{c}) & = & \displaystyle \sum_{\ell \in [0,n]}\sum_{\mbs{c} \in \{0,1\}^n: \sum_{i} {c}_{i} = l} \frac{v_{\ell}}{\binom{n}{\ell}}\\
& & [\mbox{since } \big\vert{\{0,1\}^n: \sum_{i \in [n]}  {c}_i =\ell }\big\vert=\binom{n}{\ell}] \\
& = & \displaystyle \sum_{\ell \in [0,n]} v_{\ell} \\
& = & \displaystyle 1.
\end{array}
\end{equation*}
The second constraint in \eqref{>=kpweight1} is satisfied since:
\begin{equation*}
\begin{array}{llllll} \label{>=kpweight2}
\displaystyle \sum_{\mbs{c} \in \{0,1\}^n: c_j = 1} \theta(\mb{c}) &=&\displaystyle \sum_{\ell \in [1,n]} \frac{v_{\ell}}{\binom{n}{\ell}}\binom{n-1}{\ell-1}\\
& & [\mbox{since } \big\vert{\{0,1\}^n: \sum_{i \in [n]}  {c}_i =\ell, {c}_j=1 }\big\vert=\binom{n-1}{\ell-1}] \\
& =&\displaystyle \sum_{\ell \in [1,n]} \frac{\ell v_{\ell}}{n}\\
& =& \displaystyle p.
\end{array}
\end{equation*}
The third constraint in \eqref{>=kpweight1} satisfied since:
\begin{equation*}
\begin{array}{llllll} \label{>=kpweight3}
\displaystyle \sum_{\mbs{c} \in \{0,1\}^n: c_i = 1, {c}_{j} =1} \theta(\mb{c}) &=&\displaystyle \sum_{\ell \in [2,n]} \frac{v_{\ell}}{\binom{n}{\ell}} \binom{n-2}{\ell-2} \\
& & [\mbox{since } \big\vert{\{0,1\}^n: \sum_{t \in [n]}  {c}_t =\ell, {c}_i=1,{c}_j=1 }\big\vert=\binom{n-2}{\ell-2}] \\
& = & \displaystyle \frac{2}{n(n-1)}\sum_{\ell \in [2,n]} \binom{\ell}{2}v_{\ell}  \\
&= & \displaystyle p^2.
\end{array}
\end{equation*}
The objective function value of the feasible solution is given by:
\begin{equation*}
\begin{array}{llllll}
\displaystyle \sum_{\mbs{c} \in \{0,1\}^n: \sum_{i} {c}_{i} \geq k} \theta(\mb{c}) &=& \displaystyle \sum_{\ell \in [k,n]}\sum_{\mbs{c} \in \{0,1\}^n: \sum_{i} {c}_{i} = l} \theta(\mb{c})
\\
&= & \displaystyle \sum_{\ell \in [k,n]}v_{\ell}\\
& =& BP(n,k,p).
 \end{array}
 \end{equation*}
Hence, the optimal objective value of the two linear programs are equivalent. The formula for the tight bound in the theorem is then exactly the Boros and Pr\'{e}kopa bound in \eqref{BorosPrekopa} (the bound $BP(n,k,p)$ is also derived in the work of \cite{sathe1980}, although tightness of the bound is not shown there). It is straightforward to verify that the following distributions attain the bounds for each of the cases (a)-(c) in the statement of the theorem:
\texitem{(a)}
The probabilities are given as:
\begin{equation*} \label{d1}
\displaystyle \theta(\mb{c}) = \left\{\begin{array}{llr}
\displaystyle \frac{(1-p)(j-(n-1)p)}{\binom{n-1}{j-1}}, & \mbox{if } \displaystyle \sum_{t \in [n]} {c}_{t} = j-1, \\
\displaystyle \frac{(1-p)(1+(n-1)p-j)}{\binom{n-1}{j}}, & \mbox{if } \displaystyle \sum_{t \in [n]} {c}_{t} = j, \\
\displaystyle \frac{n(n-1)p^2+(j-1)(j-2np)}{(n-j)^2+(n-j)} ,& \mbox{if } \displaystyle \sum_{t \in [n]} {c}_{t} = n,
\end{array}\right.
\end{equation*}
where $j=\ceil{(n-1)p}$ and all other support points have zero probability.
\texitem{(b)} The probabilities are given as:
\begin{equation*} \label{d2}
\displaystyle \theta(\mb{c}) = \left\{\begin{array}{llr}
\displaystyle \frac{1-p}{k}(k-(n-1)p), & \mbox{if } \displaystyle \sum_{t \in [n]} {c}_{t} = 0,\\
\displaystyle \frac{p(1-p)}{\binom{n-2}{k-1}}, & \mbox{if } \displaystyle \sum_{t \in [n]} {c}_{t} =k, \\
\displaystyle \frac{p((n-1)p-(k-1))}{n-k}, & \mbox{if } \displaystyle \sum_{t \in [n]} {c}_{t} = n,
\end{array}\right.
\end{equation*}
where all other support points have zero probability.
\texitem{(c)} The probabilities are given as:
\begin{equation*} \label{d3}
\displaystyle \theta(\mb{c}) = \left\{\begin{array}{llr}
\displaystyle \frac{np[(n-1)p-(k+i-1)]+ik}{\binom{n}{i-1}(k-i+1)}, & \mbox{if } \displaystyle \sum_{t \in [n]} {c}_{t} = i-1,\\
\displaystyle \frac{np[(k+i-2)-(n-1)p]-k(i-1)}{\binom{n}{i}(k-i)}, & \mbox{if } \displaystyle \sum_{t \in [n]} {c}_{t} =i, \\
\displaystyle \frac{n(n-1)p^2+(i-1)(i-2np)}{\binom{n}{k}[(k-i)^2+(k-i)]}, & \mbox{if } \displaystyle \sum_{t \in [n]} {c}_{t} = k,
\end{array}\right.
\end{equation*}
where all other support points have zero probability and the index $i$ is evaluated as stated in equation (\ref{BP>=iden})(c).
It is straightforward to see that with identical marginals, the tight union bound in Theorem \ref{thm:unionbound} reduces to the bound in case (b) of Theorem \ref{thm:idenBPtight}.
 \end{proof}

\subsubsection{Connection of Theorem \ref{thm:idenBPtight} to existing results} Tightness results with identical Bernoulli random variables have been established in the literature in the context of occurence of at least and exactly $k$ out of $n$ events for specific regimes of the parameters $n,k$ and $p$. Theorem \ref{thm:idenBPtight} however, provides the tight bounds for all values of $(n,k,p)$. Recent work by Garnett \cite{garnett} provides the tight upper bound on the probability that the sum of pairwise independent Bernoulli random variables exceeds the mean by a small amount (this corresponds to case (b)). Pinelis \cite{pinelis2021exact} derives a closed-form tight lower bound on the probability of occurence of exactly one of out $n$ events. Benjamini et al. \cite{benjamini12} and Peled et al. \cite{peled} derived closed-form upper and lower bounds (not necessarily tight) on the maximal intersection probability of more general $t$-wise independent Bernoulli random variables (this corresponds to $k = n$ in case (c) for $t=2$). These bounds were shown to match each other up to multiplicative factors of lower order in a large regime of the parameters $n,p,t$. The connection of the intersection probability with the linear program based approach of Boros and Pr\'{e}kopa \cite{boros1989} has been mentioned in these papers, although the equivalence for all values of $k$ is not established. Corollary \ref{cor:twiseiden} in this paper, however, establishes the equivalence for all values of $n,k,p,t$.
The usefulness of Theorem \ref{thm:idenBPtight} lies in the fact that it can be extended to incorporate a wide variety of cases involving identical Bernoulli events by using the results from Boros and Pr\'{e}kopa \cite{boros1989} as follows:
 \begin{enumerate}[label=\roman*),wide=0pt]
  \item Tight closed-form lower bounds on probability of occurrence of at least $k$ out of $n$ events

  \item  Tight closed-form upper and lower bounds on the probability of occurence of exactly $k$ out of $n$ events

  \item Tight linear program based upper and lower bounds for $t$-wise independent variables ($t \geq 3$) from the symmetry assumptions (see Corollary \ref{cor:twiseiden}).
  \end{enumerate}
We note that when $k\geq 1+(n-1)p$, the tight lower bound from \cite{boros1989} can be derived as:
\begin{equation*}
 \label{minBP>=iden}
\scalebox{1}{$\underline{P}(n,k,p)$} = \left\{
\scalebox{1}{$
\begin{array}{lll}
\scalebox{1}{$\frac{\big( 2+(n-1)p-k\big)p}{n-k+1} $},& 1+(n-1)p\leq k<2+(n-1)p \\
0,& k\geq2+(n-1)p.
 \end{array}$}
 \right.
\end{equation*}
When $k=n \geq 1+(n-1)p$, this bound reduces to $\max\left(p\left((n-1)p-(n-2)\right),0\right)$ which is exactly the intersection bound computed in Corollary \ref{cor:intersection} with identical probabilities.
\begin{corollary}\label{cor:twiseiden}
Consider identical $t$-wise independent Bernoulli random variables with probabilities $p \in (0,1)$ where $t \in [2,n]$. Then, the tightest upper bound on the probability of $n$ such variables adding up to at least $k \in [n]$, denoted by $\overline{P}(n,k,p,t)$, can be computed as the optimal value of the aggregated linear program proposed by Pr\'{e}kopa \cite{prekopa1990}:
\begin{equation} \label{opt:twiseBP>=iden}
\begin{array}{rllll}
\overline{P}(n,k,p,t)  = \max & \displaystyle \sum_{\ell=k}^n  v_{\ell}\\
 \textrm{s.t.}   & \displaystyle\sum_{\ell=m}^n \binom{\ell}{m}v_{\ell} =\binom{n}{m}p^m, & \forall m \in [0,t],\\
  & \displaystyle v_{\ell} \geq 0, &\forall \ell \in [0,n],
\end{array}
\end{equation}
\end{corollary}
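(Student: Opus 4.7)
The plan is to mimic the proof strategy of Theorem \ref{thm:idenBPtight}, showing that the exponential-sized primal linear program defining $\overline{P}(n,k,p,t)$ is equivalent to the polynomial-sized aggregated linear program \eqref{opt:twiseBP>=iden}. The primal formulation maximizes $\sum_{\mbs{c}: \sum_i c_i \geq k} \theta(\mb{c})$ over distributions $\theta$ supported on $\{0,1\}^n$ subject to, for each $m \in [t]$ and each subset $\{i_1,\ldots,i_m\} \subseteq [n]$, the $t$-wise independence constraint $\mathbb{P}_\theta(\tilde{c}_{i_1}=1,\ldots,\tilde{c}_{i_m}=1) = p^m$. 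Equivalence is shown in two directions.

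For the first direction ($\overline{P}(n,k,p,t) \leq$ LP optimum), given any feasible $\theta$ to the exponential LP, I would aggregate probabilities by letting $v_\ell = \sum_{\mbs{c}: \sum_i c_i = \ell} \theta(\mb{c})$. Taking expectations on both sides of the identity \eqref{binomial}, $\binom{\sum_i c_i}{m} = S_m(\mb{c})$, gives $\sum_{\ell=m}^n \binom{\ell}{m} v_\ell = \mathbb{E}[S_m(\tilde{\mb{c}})]$ for each $m \in [0,t]$. By $t$-wise independence, $\mathbb{E}[S_m(\tilde{\mb{c}})] = \binom{n}{m}p^m$, so each moment constraint in \eqref{opt:twiseBP>=iden} is satisfied, and the objective values agree.

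For the reverse direction ($\overline{P}(n,k,p,t) \geq$ LP optimum), given an optimal $\mb{v}$ to \eqref{opt:twiseBP>=iden}, I would construct the symmetric distribution $\theta(\mb{c}) = v_\ell/\binom{n}{\ell}$ whenever $\sum_i c_i = \ell$. The key calculation is that for any subset $\{i_1,\ldots,i_m\} \subseteq [n]$ of size $m \in [0,t]$,
\begin{equation*}
\sum_{\mbs{c}: c_{i_1}=\ldots=c_{i_m}=1}\theta(\mb{c}) = \sum_{\ell = m}^n \frac{v_\ell}{\binom{n}{\ell}}\binom{n-m}{\ell-m} = \frac{1}{\binom{n}{m}} \sum_{\ell=m}^n \binom{\ell}{m} v_\ell = p^m,
\end{equation*}
where the second equality uses $\binom{n-m}{\ell-m}/\binom{n}{\ell} = \binom{\ell}{m}/\binom{n}{m}$ and the third uses the $m$-th moment constraint in \eqref{opt:twiseBP>=iden}. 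Since the constructed $\theta$ depends only on $\sum_i c_i$, the value $p^m$ above does not depend on the particular subset $\{i_1,\ldots,i_m\}$, so every $m$-wise joint probability for $m \leq t$ equals $p^m$, establishing $t$-wise independence. The objective value satisfies $\sum_{\mbs{c}: \sum_i c_i \geq k} \theta(\mb{c}) = \sum_{\ell=k}^n v_\ell$.

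The main obstacle is really just bookkeeping: extending the pairwise $m=2$ verification from Theorem \ref{thm:idenBPtight} to all $m \in [0,t]$ uniformly. Once the single combinatorial identity $\binom{n-m}{\ell-m}\binom{n}{m} = \binom{\ell}{m}\binom{n}{\ell}$ is in hand, the argument is mechanical, and the equivalence of the two linear programs yields the corollary. No structural difficulty beyond what already appeared in the $t=2$ case is expected.
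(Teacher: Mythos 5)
Your proposal is correct and follows essentially the same route as the paper: the paper's proof of Corollary \ref{cor:twiseiden} likewise reduces to the equivalence of the exponential-sized and aggregated linear programs, aggregating $\theta$ into $v_\ell$ in one direction and distributing $v_\ell$ uniformly over the $\binom{n}{\ell}$ scenarios in the other, exactly as in Theorem \ref{thm:idenBPtight}. Your explicit use of the identity $\binom{n-m}{\ell-m}\binom{n}{m}=\binom{\ell}{m}\binom{n}{\ell}$ for general $m\le t$ is just the bookkeeping the paper leaves implicit.
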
where the decision variables are the probabilities $v_{\ell}=\mathbb{P}(\sum_{i=1}^n \tilde{c}_i = \ell)$ for $l \in [0,n]$.

 \begin{proof}
 The proof is straightforward from the proof of Theorem \ref{thm:idenBPtight} which implies the equivalence of \eqref{opt:twiseBP>=iden} with the large-sized linear program:
 \begin{equation}\label{opt:twiseexpo}
\begin{array}{rrllll}
\displaystyle \overline{P}(n,k,p,t)  =\max & \displaystyle \sum_{\mbs{c} \in \{0,1\}^n: \sum_{i} {c}_{i} \geq k} \mathbb{P}(\mb{c}) \\
 \textrm{s.t.}  &\displaystyle  \sum_{\mbs{c} \in \{0,1\}^n} \mathbb{P}(\mb{c})&=& 1,  \\
 & \displaystyle \sum_{\mbs{c} \in \{0,1\}^n:\; c_{i} =1,  \;\forall i \in  J}\mathbb{P}(\mb{c})&=& p^m, &  \forall J \in I_m, \;m \in [t], \\
  & \displaystyle \mathbb{P}(\mb{c})& \geq &0, & \forall \mb{c} \in \{0,1\}^n,
\end{array}
\end{equation}
where $ I_m=\{I \subseteq [n]: |I|=m\}$. In particular for any given feasible solution of \eqref{opt:twiseBP>=iden}, we can distribute the probability mass $v_{\ell}$ evenly across the $\binom{n}{\ell}$ scenarios for every $\ell \in [0,n]$ and satisfy all the constraints in \eqref{opt:twiseexpo} while for any given feasible solution of \eqref{opt:twiseexpo}, we can aggregate the probabilities $\mathbb{P}(\mb{c})$ as
\begin{align*}
v_{\ell}=\sum_{\mbs{c} \in \{0,1\}^n: \sum_{i} {c}_{i} = l} \mathbb{P}(\mb{c}), \quad \forall l \in [0,n].
\end{align*} and satisfy all constraints in  \eqref{opt:twiseBP>=iden}.
 \end{proof}
\noindent We note that for $3$-wise independent variables, a closed-form expression for the optimal objective in \eqref{opt:twiseBP>=iden} using the first three binomial moments has been provided in \cite{boros1989}. Further, the corresponding tight lower bound $\underline{P}(n,k,p,t)$ can be computed as the optimal value of the minimization version of the aggregated linear program in \eqref{opt:twiseBP>=iden}.

\subsubsection{Tightness of alternative bounds}\label{subsubsec:identightalternative}
We next discuss an application of Theorem \ref{thm:idenBPtight}. Since the marginals are identical, it is easy to see that the ordered bounds in Theorem \ref{thm:nonidentical} reduce to the unordered bounds corresponding to $r=0$. While the unordered Boros and Pr\'{e}kopa bound provides the tightest upper bound with identical marginals, the formula is more involved than the unordered Chebyshev bound which reduces to:
\begin{equation} \label{eq:ChebyH}
\begin{array}{lll}
\overline{P}(n,k,{p}) \leq \begin{cases}
 1,   & k < np, \\
  { \displaystyle np(1-p) }/{\displaystyle \left(np(1-p) +(k- np)^2\right)} ,& np \leq k \leq n.
  \end{cases}
\end{array}
\end{equation}
and the unordered Schmidt, Siegel and Srinivasan bound which reduces to:
\begin{equation}  \label{eq:SSSH}
\begin{array}{lll}
\overline{P}(n,k,p) \leq
 \min\bigg(1, \displaystyle\frac{np}{k} ,\frac{n(n-1)p^2}{k(k-1)}\bigg).
\end{array}
\end{equation}
It is possible to then use Theorem \ref{thm:idenBPtight} to identify conditions on the parameters $(n,k,p)$ for which the bounds in \eqref{eq:ChebyH} and \eqref{eq:SSSH} are tight. We only focus on the non-trivial cases where the tight bound is strictly less than one and $n \geq 3$. Henceforth, the Chebyshev and Schmidt, Siegel and Srinivasan bounds referred to in this section are the unordered bounds.
\begin{prop}\label{prop:tighthom}\
 \texitem{(a)} For $p=\alpha/(n-1)$ and any integer $\alpha \in [n-2]$, the Chebyshev bound in \eqref{eq:ChebyH} is tight for the values of $k = \alpha+1$ and $k = n$.
\texitem{(b)} For $p\leq 1/(n-1)$, the Schmidt, Siegel and Srinivasan bound in \eqref{eq:SSSH} is tight for all $k \in [2,n]$ while for $p>1/(n-1)$, the bound is tight for all values of $k \in [\left\lceil 1+(n-1)p \right\rceil, \left\lfloor n(n-1)p^2/(np-1) \right\rfloor]$.
\end{prop}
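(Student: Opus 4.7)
The plan is to invoke Theorem~\ref{thm:idenBPtight}, which gives the exact tight bound $\overline{P}(n,k,p)$ in closed form, and then identify for which parameter triples $(n,k,p)$ this tight value collapses to the Chebyshev bound \eqref{eq:ChebyH} or the SSS bound \eqref{eq:SSSH}. Throughout, write $\alpha := (n-1)p$, so in part~(a) $\alpha$ is an integer with $\alpha \in [n-2]$.

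For part~(a), the key structural feature is that integrality of $\alpha$ makes the threshold $1+(n-1)p = \alpha+1$ an integer, allowing two clean evaluations. At $k = \alpha+1$ we apply case~(c) of Theorem~\ref{thm:idenBPtight} with $i = \lceil np\cdot 0/(k-np)\rceil = 0$; the bound reduces to $np[2+(n-1)p]/[k(k+1)] = np/(\alpha+1)$. The Chebyshev bound has $(k-np)^2 = (1-p)^2$, so $np(1-p)/[np(1-p)+(1-p)^2] = np/(1+(n-1)p) = np/(\alpha+1)$, which matches. At $k=n$ we are again in case~(c), with $i = \lceil np(n-1-\alpha)/(n-np)\rceil = \lceil \alpha \rceil = \alpha$. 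A short algebraic reduction of the numerator $(\alpha-1)(\alpha-2np)+n(n-1)p^2$, using the substitutions $\alpha=(n-1)p$ and $np = n\alpha/(n-1)$, collapses it to $p(n+1-\alpha)$; dividing by the denominator $(n-\alpha)(n+1-\alpha)$ gives $p/(n-\alpha)$. The Chebyshev bound factors as $np(1-p)/[np(1-p)+n^2(1-p)^2] = p/[n-(n-1)p] = p/(n-\alpha)$, matching the tight value.

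For part~(b), the crucial observation is that in case~(c) of Theorem~\ref{thm:idenBPtight} the choice $i=1$ makes the first term of the numerator vanish, so $\overline{P}(n,k,p) = n(n-1)p^2/[k(k-1)]$, which is precisely one of the arguments of the SSS minimum. Hence SSS tightness is equivalent to $i=1$. Unwinding the ceiling, $i \ge 1 \iff k \ge 1+(n-1)p$, while $i \le 1 \iff np(k-1-(n-1)p) \le k-np$, which rearranges to $k(np-1) \le n(n-1)p^2$. When $p \le 1/n$ the right inequality is automatic; when $1/n < p \le 1/(n-1)$ it becomes $k \le f(p) := n(n-1)p^2/(np-1)$, and one checks $f(1/(n-1))=n$ together with $f'(p) = n(n-1)p(np-2)/(np-1)^2 < 0$ on this interval (since $np<2$ for $n \ge 3$), so $f(p)\ge n$ and the inequality holds for every $k \in [2,n]$. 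Combined with $k \ge 1+(n-1)p$, which is automatic for $p \le 1/(n-1)$ and $k \ge 2$, the first assertion follows; the SSS minimum reduces to $n(n-1)p^2/[k(k-1)]$ because $k-1 \ge (n-1)p$. For $p > 1/(n-1)$ the same two inequalities combine to give precisely $k \in [\lceil 1+(n-1)p\rceil,\,\lfloor n(n-1)p^2/(np-1)\rfloor]$, and the same reduction of SSS applies.

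The main obstacle is the bookkeeping at boundary cases: the algebraic cancellation in part~(a) at $k=n$ is not transparent until $\alpha$ is fully substituted for $(n-1)p$; and in part~(b) the edge case $p=1/(n-1),\, k=2$ has $k = 1+(n-1)p$ (so $i=0$ rather than $i=1$), but case~(b) of Theorem~\ref{thm:idenBPtight} evaluated at this boundary still produces $np/k = n(n-1)p^2/[k(k-1)]$, preserving the claim. The ceiling/floor structure at the endpoints of the stated interval for $p>1/(n-1)$ similarly needs to be traced back to the two inequalities above.
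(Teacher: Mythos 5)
Your proposal is correct and follows essentially the same route as the paper: both invoke Theorem \ref{thm:idenBPtight}, compute the index $i$ in case~(c) for the stated parameter ranges ($i=0$ at $k=\alpha+1$, $i=\alpha$ at $k=n$ for part~(a); $i=1$, with the $i=0$ boundary at $k=1+(n-1)p$, for part~(b)), and match the resulting closed form to the Chebyshev and Schmidt--Siegel--Srinivasan expressions. Your monotonicity argument for $f(p)=n(n-1)p^2/(np-1)$ on $(1/n,1/(n-1)]$ is a slightly tidier way to confirm $i=1$ there than the paper's direct ceiling manipulation, but it is the same strategy; the only nitpick is that $i\geq 1$ requires the strict inequality $k>1+(n-1)p$, which your separate treatment of the boundary case already covers.
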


\begin{proof}
Since Theorem \ref{thm:idenBPtight} provides the tight bound, we simply need to show the equivalence with the bounds in \eqref{eq:ChebyH} and \eqref{eq:SSSH} for the instances in the proposition.
 \texitem{(a)}
Consider $p=\alpha/(n-1)$ for any integer $\alpha \in [n-2]$.
  \begin{enumerate}
  \item Set $k= \alpha +1$. This corresponds to case (c) in Theorem \ref{thm:idenBPtight}. Plugging in the values, the index $i$ which is required for finding the tight bound is given by:
   \begin{equation*}
\begin{array}{lll}
i  &=&\displaystyle \left\lceil\frac{n\alpha(\alpha+1-1-\alpha)/(n-1)}{\alpha+1-n\alpha/(n-1)}\right\rceil\\
& = & 0.
    \end{array}
 \end{equation*}
 The corresponding tight bound in \eqref{BP>=iden} gives:
  \begin{equation*}
\begin{array}{lll}   \label{chebysymmetric11}
 \overline{P}(n,k,p) =& \displaystyle \frac{n\alpha}{(n-1)(\alpha+1)}=& \displaystyle \frac{np}{np+1-p}.
  \end{array}
 \end{equation*}
 It is straightforward to verify by plugging in the values that the Chebyshev bound is exactly the same.
 \item Set $k= n$. This corresponds to case (c) in Theorem \ref{thm:idenBPtight}. Plugging in the values, the index $i$ in the tight bound is given by:
   \begin{equation*}
\begin{array}{lll}
i  &=&\displaystyle \left\lceil\frac{n\alpha(n-1-\alpha)/(n-1)}{n-n\alpha/(n-1)}\right\rceil\\
& = & \alpha.
    \end{array}
 \end{equation*}
 The tight bound in \eqref{BP>=iden} gives:
  \begin{equation*}
\begin{array}{lll}   \label{chebysymmetric111}
 \overline{P}(n,k,p) =& \displaystyle \frac{\alpha}{(n-1)(n-\alpha)}=& \displaystyle \frac{p}{p+n(1-p)}.
  \end{array}
 \end{equation*}
 It is straightforward to verify by plugging in the values that the Chebyshev bound is exactly the same in this case.
\end{enumerate}
\texitem{(b)}
Observe that the last two terms in the Schmidt, Siegel and Srinivasan bound in \eqref{eq:SSSH} satisfy:
\begin{align*}
\frac{n(n-1)p^2}{k(k-1)} \leq \frac{ np}{k} \mbox{ when } k \geq 1+(n-1)p.
\end{align*}
Since $k \geq 1+(n-1)p$ implies $1 \geq np/k$, the bound in \eqref{eq:SSSH} reduces to ${n(n-1)p^2}/{k(k-1)}$. The range of $k \geq 1+(n-1)p$ corresponds to case (c) in Theorem \ref{thm:idenBPtight}. If $k=1+(n-1)p$, the index   $i =\lceil {np(k-(1+(n-1)p))}/{(k-np)}\rceil=0$ and the tight bound from \eqref{BP>=iden} is:
$$\displaystyle \frac{ np}{1+(n-1)p},$$
which is exactly the Schmidt, Siegel and Srinivasan bound. We can also verify that when the index $i = 1$ in case (c), then the tight bound in \eqref{BP>=iden} reduces to:
   \begin{equation*}
\begin{array}{lll}
 \overline{P}(n,k,p) &=& \displaystyle \frac{n(n-1)p^2+(1-1)(1-2np)}{(k-1)^2+(k-1)} \\
& = &  \displaystyle \frac{n(n-1)p^2}{k(k-1)}.
   \end{array}
 \end{equation*}
We now identify conditions when $k>1+(n-1)p$ and the index $i$ is equal to one.
 \begin{enumerate}
  \item Consider $0 < p \leq 1/(n-1)$. For the values of $p$ in this interval, the valid range of $k$ in case (c) corresponds to integer values of $k \geq 1+(n-1)p$ which means $k \geq 2$. For the probability $0 < p \leq 1/n$, the index $i$ satisfies:\begin{equation*}
\begin{array}{lll}
i
& = & \displaystyle  \left\lceil np\bigg(1-\frac{1-p}{k-np}\bigg)\right\rceil\\
& = & 1\\
& & [\mbox{since } 0< np \leq 1   \mbox{ and } 1-p \in (0,1)  \mbox{ and } k-np > 1-p].
    \end{array}
 \end{equation*}
 For the probability $1/n < p \leq 1/(n-1)$, the index $i$ satisfies:
\begin{equation*}
\begin{array}{lll}
i  &=&\displaystyle \left\lceil(n-1)p\left(\frac{\frac{k-1}{n-1}-p}{\frac{k}{n}-p}\right)\right\rceil\\
  & = & 1\\
& & [\mbox{since } 0< (n-1)p \leq 1   \mbox{ and } 0<\frac{k-1}{n-1}-p \leq \frac{k}{n}-p ].\\
    \end{array}
 \end{equation*}
Hence, the bound in \eqref{eq:SSSH} is tight in this case for all integer values of $k \geq 2$.
  \item For $p > 1/(n-1)$, the index $i = 1$ when $k(np-1) \leq n(n-1)p^2$. This corresponds to all integer values $k \in [\left\lceil 1+(n-1)p \right\rceil, \left\lfloor n(n-1)p^2/(np-1) \right\rfloor]$.
 \end{enumerate}
\end{proof}
A specific instance to show the tightness of the Chebyshev bound is to set $p = 1/2$, $k = n$ and $n = 2^m-1$ where $m$ is an integer. Using $m$ independent Bernoulli random variables it is then possible to construct $n$ pairwise independent Bernoulli random variables (see Tao \cite{tao}, Goemans \cite{goemans2015}, Pass and Spektor \cite{passbrendan2018} for this construction). Proposition \ref{prop:tighthom}(a) includes this instance (set $\alpha = (n-1)/2$, $k = n$ and $n = 2^m-1$). In addition, Proposition \ref{prop:tighthom}(a) identifies other values of $p$ and $k$ where the Chebyshev bound is tight. Proposition \ref{prop:tighthom}(b) also shows that the Schmidt, Siegel and Srinivasan bound is tight for identical marginals for small probability values ($p \leq 1/(n-1)$), for all values of $k$, except $k = 1$. We now provide a numerical illustration of the results in Theorem \ref{thm:idenBPtight} and Proposition \ref{prop:tighthom}.
\begin{example} [Identical marginals] \
In Table \ref{table1}, we provide a numerical comparison of the bounds for $n = 11$ for a set of values of $p$ and $k$. The instances in Table \ref{table1} cover all the conditions identified in Proposition \ref{prop:tighthom} when the Chebyshev and Schmidt, Siegel and Srinivasan bounds are tight. The instances when the Chebyshev bound is tight correspond to (i) $p = 0.1$ (here $\alpha = 1$ and the Chebyshev bound is tight for $k = 2$ and $k = 11$), (ii) $p = 0.2$ (here $\alpha = 2$ and the Chebyshev bound is tight for $k = 3$ and $k = 11$) and (iii) $p = 0.5$ (here $\alpha = 5$ and the Chebyshev bound is tight for $k = 6$ and $k = 11$).
The Schmidt, Siegel and Srinivasan bound is tight for the small values of $p = 0.01, 0.05, 0.10$ (which are less than or equal to $1/(n-1) = 0.1$) and for all values of $k$, except $k=1$.
\begin{table}[H]
\vspace{-0.45cm}
\footnotesize
\caption{Upper bound on probability of sum of random variables for $n = 11$. For each value of $p$ and $k$, the table provides the tight bound in \eqref{BP>=iden} followed by the Chebyshev bound \eqref{eq:ChebyH} and the Schmidt, Siegel and Srinivasan bound \eqref{eq:SSSH}. The underlined instances illustrate nontrivial cases when the upper bounds in either \eqref{eq:ChebyH} or \eqref{eq:SSSH} are tight.}\label{table1}
\begin{center}
\scalebox{0.79}{$
\footnotesize{\begin{tabular}
{|l|l|l|l|l|l|l|l|l|l|l|l|}
  \hline
p/k& 1 & 2 & 3 & 4 & 5 & 6 & 7 & 8 & 9 & 10 & 11 \\ \hline
0.01 & 0.1090 & 0.00550 & 0.00184 &  0.00092 & 0.00055 & 0.00037 & 0.00027 & 0.00020 & 0.00016 & 0.00013 & 0.00010 \\
 & 0.1208 & 0.02959  & 0.01288 & 0.00715 & 0.00454 & 0.00313 &  0.00229 &  0.00175 &  0.00138 &  0.00112 & 0.00092\\  & 0.11000 & \underline{0.00550} & \underline{0.00184} & \underline{0.00092}   & \underline{0.00055} & \underline{0.00037} & \underline{0.00027} & \underline{0.00020} & \underline{0.00016} & \underline{0.00013} & \underline{0.00010}\\
 \hline
0.05 & 0.5250  & 0.13750 & 0.04583 & 0.02292 & 0.01375 & 0.00917 & 0.00655 & 0.00491 & 0.00382  & 0.00306 & 0.00250\\
& 0.7206 & 0.19905  & 0.08008 &0.04205 & 0.02571 &0.01729 & 0.01240  & 0.00933 & 0.00726 & 0.00582 & 0.00477\\
& 0.5500 & \underline{0.13750} & \underline{0.04583} & \underline{0.02292} &  \underline{0.01375} & \underline{0.00917} &  \underline{0.00655} & \underline{0.00491} & \underline{0.00382} & \underline{0.00306} & \underline{0.00250} \\ \hline
0.10 & 1 & 0.55000 & 0.18333 & 0.09167 &0.05500 & 0.03667 & 0.02620 &  0.01965 & 0.01528 & 0.01223 &  0.01000\\
& 1 & \underline{0.55000} & 0.21522 & 0.10532 & 0.06112 & 0.03960  &0.02766 &0.02038 &0.01562& 0.01235&\underline{0.01000}\\
& 1 & \underline{0.55000} & \underline{0.18333} & \underline{0.09167} & \underline{0.05500} & \underline{0.03667} & \underline{0.02620} &  \underline{0.01965} & \underline{0.01528} & \underline{0.01223} &  \underline{0.01000}\\\hline
0.11 &  1 & 0.59950 & 0.22184 & 0.11092 & 0.06655 & 0.04437 & 0.03037 &  0.02170 &  0.01627   & 0.01266 & 0.01013 \\
 & 1 & 0.63310 & 0.25156 & 0.12154  &  0.06975 & 0.04484 & 0.03113 &  0.02283 &0.01744 & 0.01375 & 0.01112\\
 & 1 & 0.60500 & \underline{0.22184} & \underline{0.11092} & \underline{0.06655} & \underline{0.04437} & 0.03170 & 0.02377 & 0.01849 &  0.01479 & 0.01210\\\hline
0.15 & 1 & 0.78750  & 0.41250   & 0.19584 & 0.09792     &0.05875  & 0.03916  &0.02798  & 0.02098  &0.01632  &0.01306    \\
      & 1 & 0.91968 &  0.43489 & 0.20253 &   0.11109 &0.06901 &  0.04672 & 0.03362 & 0.02531 &   0.01972 &   0.01579 \\
     & 1   &0.82500 & \underline{0.41250} & 0.20625  & 0.12375  & 0.08250   &0.05893  &  0.04419 & 0.03437  & 0.02750 & 0.02250    \\\hline
0.20 & 1 &  1 & 0.73334  &  0.33334  &  0.16667  &  0.10000 &  0.06667   & 0.04762 & 0.03572   &0.02778    &  0.02223          \\
&      1 & 1 &  \underline{0.73334}  & 0.35200 & 0.18334 &   0.10865 & 0.07097  &0.04972 & 0.03667 & 0.02812 & \underline{0.02223}\\
&        1  & 1  &  \underline{0.73334} &   0.36667 & 0.22000 &   0.14667  & 0.10477  & 0.07858   &0.06112  & 0.04889   & 0.04000              \\
\hline
0.50& 1 & 1 & 1 & 1 & 1 & {0.91667} & 0.54167 &0.29167 &0.17500 & 0.11667  &  0.08334\\
 & 1 & 1 & 1 & 1 & 1 & \underline{0.91667} & 0.55000 &0.30556 & 0.18334 & 0.11957 & \underline{0.08334}\\
 & 1 & 1 & 1 & 1 & 1 & \underline{0.91667} & 0.65477 & 0.49108 & 0.38195 & 0.30556 & 0.25000 \\ \hline
\end{tabular}
}$}
\end{center}
\end{table}
 It is also clear why the Schmidt, Siegel and Srinivasan bound is not tight for $k = 1$, since it just reduces to the Markov bound $np$ and does not exploit the pairwise independence information.
For $k = 1$, the tight bound from Theorem \ref{thm:idenBPtight} is given by $np-(n-1)p^2$ (see Theorem \ref{thm:unionbound} which reduces to the same bound for $k = 1$).
For larger values of $p$ above $0.1$, such as $p = 0.11$ in the table, from Proposition \ref{prop:tighthom}(b), the Schmidt, Siegel and Srinivasan bound is tight for $k \in [\left\lceil 2.1\right\rceil, \left\lfloor 6.33 \right\rfloor]$ which corresponds to $k \in [3,6]$. This can be similarly verified for the other probabilities $p = 0.15, 0.2, 0.5$ in the table.
\end{example}

 \subsection{Tightness of ordered bounds in a special case}\label{subsec:almostidentight}
 In this section, we provide an instance when two of the ordered bounds derived in Section \ref{sec:nonidenticalnewbounds} are shown to be tight. While the ordered bounds in Theorem \ref{thm:nonidentical} are not tight in general, the next proposition identifies a special case with almost identical marginals when the bounds of Schmidt, Siegel and Srinivasan in \eqref{POSB1} and Boros and Pr\'{e}kopa in \eqref{Prekopa3} are shown to be attained.
\begin{prop}\label{prop:tightnonid}\
Suppose the marginal probabilities equal $p \in (0,1/(n-1)]$ for $n-1$ random variables and $q \in (0,1)$ for one random variable.  Then, the ordered bounds in (\ref{POSB1}) and (\ref{Prekopa3}) are tight for the following three instances and are given by:
\begin{equation}
\overline{P}(n,k,p,q) = \left\{
\begin{array}{llll}   \label{POSBalidenthm}
\displaystyle \frac{\binom{n-1}{2}p^2}{\binom{k-1}{2}}, & k \geq 3,  \; q \geq (n-2)p,  & \mbox{(a)},\\
\displaystyle \frac{\binom{n-1}{2}p^2}{\binom{k-1}{2}}, & k \in \big[\ceil{2+ (n-2)p/q},\; n\big] ,\; p \leq q < (n-2)p, & \mbox{(b)}, \\
\displaystyle pq, & k=n, \; 0 < q < p, & \mbox{(c)}.
\end{array}
\right.
\end{equation}
\end{prop}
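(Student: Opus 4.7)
The plan is, in each case, to (i) recognize the claimed value as one term inside the minimum defining the ordered Schmidt, Siegel and Srinivasan bound \eqref{POSB1}, so that it serves as an upper bound on $\overline{P}(n,k,\mb{p})$, and (ii) exhibit a pairwise independent joint distribution achieving that value, proving tightness. Since the ordered Boros and Pr\'ekopa bound \eqref{Prekopa3} lies between $\overline P(n,k,\mb p)$ and the ordered SSS bound, it must also equal the stated value once (i) and (ii) are in place. For step (i) in cases (a) and (b), since $q\geq p$, the $n-1$ smallest probabilities are all $p$; taking $r_2=1$ gives $S_{2,1}=\binom{n-1}{2}p^2$ and a bound term of $\binom{n-1}{2}p^2/\binom{k-1}{2}$. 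For step (i) in case (c), since $q<p$ the two smallest probabilities are $q$ and $p$; taking $r_2=n-2$ gives $S_{2,n-2}=pq$ and $\binom{k-r_2}{2}=1$, yielding $pq$.

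For step (ii) in cases (a) and (b), label the variable with probability $q$ as the $n$-th coordinate. Restrict the joint distribution to the symmetric support $\{\mathbf{0},\mathbf{e}_n\}\cup\{\mathbf{e}_i\}_{i\in[n-1]}\cup\{\mathbf{e}_i+\mathbf{e}_n\}_{i\in[n-1]}\cup\{\mathbf{1}_S+\mathbf{e}_n : S\subseteq[n-1],\,|S|=k-1\}$ and impose symmetry under permutations of $[n-1]$. The marginal and pairwise constraints reduce to a small linear system whose solution is
\begin{align*}
\theta(\mathbf{1}_S+\mathbf{e}_n) &= \frac{p^2}{\binom{n-3}{k-3}},\qquad \theta(\mathbf{e}_i+\mathbf{e}_n)=p\left(q-\frac{(n-2)p}{k-2}\right),\\
\theta(\mathbf{e}_i) &= p(1-q),\qquad \theta(\mathbf{0})=(1-q)\bigl(1-(n-1)p\bigr),\\
\theta(\mathbf{e}_n) &= q\bigl(1-(n-1)p\bigr)+\frac{(n-1)(n-2)p^2}{k-1}.
\end{align*}
Non-negativity of $\theta(\mathbf{e}_i+\mathbf{e}_n)$ is exactly the requirement $(k-2)q\geq(n-2)p$, which is implied by the hypothesis of case (a) (using $q\geq(n-2)p$ and $k\geq 3$) and is precisely the integer version of the hypothesis of case (b); the other probabilities are non-negative because $p\leq 1/(n-1)$. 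Summing over the $\binom{n-1}{k-1}$ subsets of size $k-1$ gives $P(\sum_i \tilde c_i \geq k)=\binom{n-1}{k-1}p^2/\binom{n-3}{k-3}=\binom{n-1}{2}p^2/\binom{k-1}{2}$, which matches the upper bound and closes the argument.

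For step (ii) in case (c), again let the $n$-th coordinate carry probability $q$. Set $\theta(\mathbf{e}_n)=q(1-p)$ and $\theta(\mathbf{1})=pq$, and distribute the remaining mass $1-q$ over configurations with $c_n=0$ by taking the conditional distribution of $(\tilde c_1,\ldots,\tilde c_{n-1})$ given $\tilde c_n=0$ to be the convex combination $\lambda\,\pi_{\text{indep}}+(1-\lambda)\,\pi_{\text{atmost1}}$, where $\pi_{\text{indep}}$ has mutually independent Bernoulli$(p)$ entries and $\pi_{\text{atmost1}}$ selects a uniformly random index in $[n-1]$ to be $1$ with total probability $(n-1)p$ and sets all entries to $0$ otherwise. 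The hypothesis $p\leq 1/(n-1)$ makes $\pi_{\text{atmost1}}$ a valid distribution, and choosing $\lambda=(p-q)/(p(1-q))\in[0,1]$ yields a conditional bivariate $\lambda p^2=p(p-q)/(1-q)$. Routine verification then gives $P(\tilde c_i=1)=p$, $P(\tilde c_n=1)=q$, $P(\tilde c_i=1,\tilde c_n=1)=pq$, $P(\tilde c_i=1,\tilde c_j=1)=pq+(1-q)\cdot p(p-q)/(1-q)=p^2$, and the target intersection probability $P(\sum_i\tilde c_i=n)=\theta(\mathbf{1})=pq$.

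The main obstacle is case (c), where the induced conditional bivariate probability $p(p-q)/(1-q)$ is strictly below $p^2$, corresponding to a negatively correlated Bernoulli vector on $n-1$ coordinates that neither Lemma~\ref{lem:bivarfeas} nor Corollary~\ref{cor:bivarfeasvariant} produces directly. The hypothesis $p\leq 1/(n-1)$ is precisely what unlocks the construction, by making $\pi_{\text{atmost1}}$ (with bivariate $0$) feasible and allowing interpolation with mutual independence. In cases (a) and (b), the main technical step is instead solving the linear system on the indicated symmetric support and observing that the stated constraint on $k$ is exactly what is needed to keep every probability non-negative.
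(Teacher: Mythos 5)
Your proposal is correct, and for cases (a) and (b) it is essentially the paper's proof: the same choice $r_2=1$ in \eqref{POSB1} for the upper bound, and the extremal distribution you write down is identical, atom for atom, to the one in \eqref{dalmost1}, with the same non-negativity analysis (the condition $(k-2)q\geq(n-2)p$ being exactly what cases (a) and (b) supply). Two points differ. First, for the ordered Boros--Pr\'ekopa bound \eqref{Prekopa3} you use a sandwich argument (it sits between $\overline{P}$ and the ordered Schmidt--Siegel--Srinivasan bound, so equality of the outer two forces equality throughout), whereas the paper explicitly exhibits the parameters $r$ and $i$ at which \eqref{Prekopa3} reduces to the claimed value; your route is shorter and relies only on the dominance already established in Section 3.1. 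Second, in case (c) your extremal distribution is genuinely different from the paper's \eqref{dalmost2}: conditional on $c_n=0$ you mix the independent product measure with an ``at most one success'' measure, using $\lambda=(p-q)/(p(1-q))$ to hit the required conditional bivariate $p(p-q)/(1-q)$, while the paper instead concentrates the $c_n=0$ mass on the all-zeros vector, the $n-1$ singletons, and the all-ones vector of the first $n-1$ coordinates. Both constructions are valid (extremal distributions are not unique); yours makes transparent why $p\leq 1/(n-1)$ is the operative hypothesis (it is what makes the ``at most one'' measure a probability distribution, enabling interpolation below independence, a regime not covered by Lemma \ref{lem:bivarfeas} or Corollary \ref{cor:bivarfeasvariant}), while the paper's has the virtue of an explicit five-parameter linear system that is checked directly. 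No gaps.
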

\begin{proof}
 We first prove that the ordered bounds of Schmidt, Siegel and Srinivasan and Boros and Pr\'{e}kopa reduce to the bound in \eqref{POSBalidenthm} in each of the three cases and then show that the bound is tight.
 \texitem{(1)} Show reduction of ordered bounds to the bound in (\ref{POSBalidenthm}): Let $\overline{P}(n,k,p,q)$ represent the tightest upper bound when $n-1$ probabilities are $p$ and one is $q$. It can be observed that the bound in \eqref{POSBalidenthm} is non-trivial for the three instances since:
 \begin{equation*}
\begin{array}{rllll}   \label{nontrivial}
\displaystyle \frac{\binom{n-1}{2}p^2}{\binom{k-1}{2}}&=&  \displaystyle \frac{(n-1)p(n-2)p}{(k-1)(k-2)}< 1,\\
   & & [\mbox{since}\;\displaystyle (n-2)p < (n-1)p \leq 1\; \mbox{and}\;k \geq 3 \; \mbox{for cases (a) and (b)}],\\
   pq & < & 1,\\
   & & [\mbox{since}\;q<p<1 \;\mbox{for case (c)}].
\end{array}
\end{equation*}
It is easy to verify that the ordered Schmidt, Siegel and Srinivasan bound in \eqref{POSB1} reduces to the bound in \eqref{POSBalidenthm} for a specific parameter $r_2$ in each of the three cases:
\begin{equation}
\begin{array}{llll}   \label{SSSreduction}
 r_2=1,&  \mbox{cases (a) and (b)},\\
r_2=n-2, &  \mbox{case (c)}.
\end{array}
\end{equation}
It can be similarly verified that the ordered Boros and Pr\'{e}kopa bound in \eqref{Prekopa3} reduces to the bound in \eqref{POSBalidenthm} with the following parameters $r$ and $i$ in each of the three cases:
\begin{equation}
\begin{array}{llll}   \label{POSBalidenthm2}
 r=1,\;i=0, &  \mbox{cases (a) and (b)},\\
r=n-2,\;i=0, &  \mbox{case (c)}.
\end{array}
\end{equation}
The effectiveness of ordering is demonstrated by \eqref{SSSreduction} and \eqref{POSBalidenthm2} in that the ordered bounds of Schmidt, Siegel and Srinivasan and  Boros and Pr\'{e}kopa correspond to $r > 0$ while their unordered counterparts in \eqref{SSS} and \eqref{BorosPrekopa} correspond to $r=0$ (considering all $n$ variables). The unordered bounds are thus strictly weaker than the ordered bounds which in turn are tight as proved in the next step.
 \texitem{(2)} Prove tightness of the bound in (\ref{POSBalidenthm}) by constructing extremal distributions: Consider the linear program to compute $\overline{P}(n,k,p,q) $ which can be written as:
\begin{equation}
\begin{array}{rlllll} \label{large_pw}
\displaystyle \overline{P}(n,k,p,q)  = \max & \displaystyle \sum_{\mbs{c} \in \{0,1\}^n: \sum_{t} {c}_{t} \geq k} \theta(\mb{c}) \\
 \mbox{s.t}  &\displaystyle  \sum_{\mbs{c} \in \{0,1\}^n} \theta(\mb{c})= 1,\\
 & \displaystyle \sum_{\mbs{c} \in \{0,1\}^n: c_i = 1} \theta(\mb{c})= p , & \forall i \in [n-1],\\
  & \displaystyle \sum_{\mbs{c} \in \{0,1\}^n: c_n = 1} \theta(\mb{c})= q, \\
 & \displaystyle \sum_{\mbs{c} \in \{0,1\}^n: c_i = 1, c_j = 1}\theta(\mb{c})= p^2, & \forall (i,j) \in K_{n-1}, \\
 & \displaystyle \sum_{\mbs{c} \in \{0,1\}^n: c_i = 1, c_n = 1}\theta(\mb{c})= pq, & \forall i \in [n-1],\\
  & \displaystyle \theta(\mb{c}) \geq 0, & \forall \mb{c} \in \{0,1\}^n.
\end{array}
\end{equation}
We now proceed to prove tightness of the bound in \eqref{POSBalidenthm} for each of the three instances of the $(n,k,p,q)$ tuple by constructing feasible distributions of \eqref{large_pw} which attain the bound.
\begin{enumerate}
\item ${\overline{P}(n,k,p,q) =\displaystyle \frac{\binom{n-1}{2}p^2}{\binom{k-1}{2}}}$ (cases (a) and (b)):\\
 The following distribution attains the tight bound:
\begin{equation} \label{dalmost1}
\begin{array}{llllll}
\displaystyle \theta(\mb{c}) = \\
\left\{\begin{array}{llr}
(1-q)(1-(n-1)p), & \mbox{if }  \displaystyle \sum_{t \in [n]} {c}_{t} =0,  & (x),\\
p(1-q), & \mbox{if}  \displaystyle \sum_{t \in [n-1]} {c}_{t} =1, c_n =0, & (y), \\
q(1-(n-1)p)+\frac{(n-1)(n-2)p^2}{(k-1)}, & \mbox{if }  \displaystyle\sum_{t \in [n-1]} {c}_{t} =0, c_n =1,  & (z),\\
p(q-\frac{n-2}{k-2} p), & \mbox{if }  \displaystyle \sum_{t \in [n-1]} {c}_{t} =1, c_n = 1,  & (u),\\
\frac{p^2}{\binom{n-3}{k-3}}, & \mbox{if } \displaystyle \sum_{t \in [n-1]} {c}_{t} = k-1, c_n =1,  & (v).
\end{array}\right.
\end{array}
\end{equation}
We use symbols $x,y,z,u,v$ to denote the probability of the associated scenarios in (\ref{dalmost1}). The constraints in \eqref{large_pw} reduce to:
\[\begin{array}{ll} \label{constraints1}
 \binom{n-2}{k-2}  v+u+y=p&   \\
 \binom{n-1}{k-1}   v+(n-1)u+z= q&   \\
 \binom{n-3}{k-3}v = p^2& \\
 \binom{n-2}{k-2}v + u=pq &  \\
 x+y+z+u+v=1,&
\end{array}
\]
and using $x,y,z,u,v$ from \eqref{dalmost1}, it can be easily verified that all of the above  constraints are satisfied.  The non-negativity constraints for $y,v$ are satisfied while  $x \geq 0, \; z\geq 0$ is satisfied since $(n-1)p \leq 1$. Remaining case is $u$, for which we have:
\begin{equation*}
\begin{array}{lllll} \label{nonneg1}
\textrm{case (a):} & \displaystyle u & = & p(q-\frac{n-2}{k-2} p) \\
&&\geq &  p(q-\frac{n-2}{3-2} p) \\
  & && [\mbox{since}\;  k \geq 3 ] \\
&&= & p(q-(n-2) p) \\
  & && [\mbox{since}\;  q >(n-2)p ]  \\
  & &\geq & 0  \\
  \textrm{case (b):} & \displaystyle u & = & p(q-\frac{n-2}{k-2} p) \\
&& \geq &  p(q-\frac{k-2}{k-2} q)\\
 & &&  [\mbox{since } k\geq 2+ (n-2)p/q ]   \\
 && =& 0.
\end{array}
\end{equation*}
The only support points contributing to the objective function are the first set of $ \binom{n-1}{k-1}$ scenarios, and so we have $\overline{P}(n,k,p,q) =  \binom{n-1}{k-1} {p^2}/{\binom{n-3}{k-3}}  =  {\binom{n-1}{2}p^2}/{\binom{k-1}{2}}$.
\item
${\overline{P}(n,k,p,q) =pq}$ (case (c)): \\ The following distribution attains the tight bound $\displaystyle pq$:
\begin{equation} \label{dalmost2}
\displaystyle \theta(\mb{c}) = \left\{\begin{array}{llr}
(1-p)(1-(n-2)p-q), & \mbox{if } \displaystyle \sum_{t \in [n]} {c}_{t} =0,  & (x),\\
p(1-p), & \mbox{if}  \displaystyle\sum_{t \in [n-1]} {c}_{t} =1, c_n =0,   & (y),\\
q(1-p),& \mbox{if }  \displaystyle\sum_{t \in [n-1]} {c}_{t} =0, c_n =1,  & (z),\\
p(p-q), & \mbox{if }  \displaystyle \sum_{t \in [n-1]} {c}_{t} =n-1, c_n =0,  & (u),\\
pq, & \mbox{if } \displaystyle \sum_{t \in [n]} {c}_{t} = n, & (v). \\
\end{array}\right.
\end{equation}
The constraints in \eqref{large_pw} reduce to:
\[\begin{array}{ll} \label{constraints3}
 \displaystyle y+u+v=p&   \\
 \displaystyle z+v= q&  \\
 \displaystyle u+v = p^2&  \\
 \displaystyle v=pq &   \\
 \displaystyle x+y+z+u+v=1,&
\end{array}
\]
and using $x,y,z,u,v$ from \eqref{dalmost2}, it can be easily verified that all of the above  constraints are satisfied. The non-negativity contraints for $y, z,u,v$ are satisfied by $0< q \leq p \leq 1$ while for $x$, we have:
\begin{equation*}
\begin{array}{lll} \label{nonneg3}
 \displaystyle x &=&(1-p)(1-(n-2)p-q) \\
 & \geq &  \displaystyle (1-p)(1-(n-2)p-p)   \\
  &  & [\mbox{since}\;  q < p ]   \\
&= &  \displaystyle (1-p)(1-(n-1)p)    \\
& \geq & 0   \\
 &  & [\mbox{since}\;  (n-1)p \leq 1].
\end{array}
\end{equation*}
The distribution in \eqref{dalmost2} attains the bound $pq$.
\end{enumerate}
We have thus constructed two feasible probability distributions in \eqref{dalmost1} and \eqref{dalmost2} which attain the bound in \eqref{POSBalidenthm} in each of the three instances defined by the $(n,k,p,q)$ tuple. Hence the parameters $r_2,\;r$ in \eqref{SSSreduction} and \eqref{POSBalidenthm2} defined for each of the three cases must be the minimizers which exactly reduce the ordered bounds in \eqref{POSB1} and \eqref{Prekopa3} to the tight bound in \eqref{POSBalidenthm}.
\end{proof}

\begin{example}
This example demonstrates the usefulness of Proposition \ref{prop:tightnonid} when $n=100$ and $p=0.01$ where $(n-1)p\leq 1$. It compares the tight bounds computed from \eqref{POSBalidenthm} with the unordered bounds of Schmidt, Siegel and Srinivasan from \eqref{SSS} and that of Boros and Pr\'{e}kopa from \eqref{BorosPrekopa}.
\begin{figure}[htbp]
\begin{subfigure}[b]{0.45\linewidth}
\includegraphics[scale=0.4]{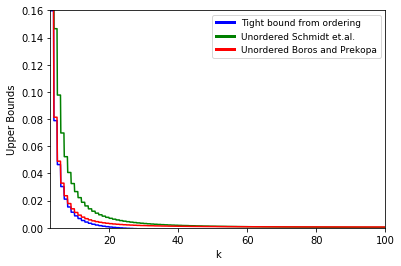}
\caption{$q=0.99,\;q \geq (n-2)p,\;k\geq 3$ }
\label{fig:alhomo1a}
\end{subfigure}
\begin{subfigure}[b]{0.45\linewidth}
\includegraphics[scale=0.4]{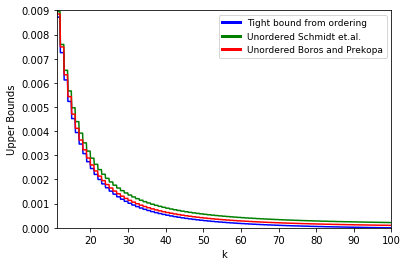}
\caption{$q=0.1,\;p\leq q < (n-2)p,\;k\geq 12$ }
\label{fig:alhomo1b}
\end{subfigure}%
\caption{Comparison of unordered bounds with tight bound when $n=100,\; p=0.01$}
\label{fig:alhomo1c}
\end{figure}

Figure \ref{fig:alhomo1a} plots the two unordered bounds along with the tight bound when $ q=0.99$ (case (a) of Proposition \ref{prop:tightnonid}), where the tight bound is valid for all $k$ in $[3,n]$, while Figure \ref{fig:alhomo1b} compares the bounds when $q=0.1$ (case (b) of Proposition \ref{prop:tightnonid}) for $k\geq 12$ as the tight bound is valid when $k\geq \ceil{2+(n-2)p/q}=\ceil{11.8}=12$. The unordered Boros and Pr\'{e}kopa bound is much tighter than the unordered Schmidt, Siegel and Srinivasan bound in both figures.
Hence, Figure \ref{fig:alhomo1c} demonstrates that with ordering, the relative improvement of the Schmidt, Siegel and Srinivasan bound is much better than that of the Boros and Pr\'{e}kopa bound although both the ordered bounds reduce to the tight bound in \eqref{POSBalidenthm}.
\end{example}

\section{Conclusion} \label{sec:conclusion}
In this paper we have provided results towards finding tight probability bounds for the sum of $n$ pairwise independent random variables adding up to at least an integer $k$. In Section \ref{sec:unionbound}, we first established with Lemma \ref{lem:bivarfeas} that a feasible correlated distribution of a Bernoulli random vector $\tilde{\mb{c}}$ with an arbitrary univariate probability vector $\mb{p} \in [0,1]^n$ and transformed bivariate probabilities $p_ip_j/p$ where $\max_i p_i \leq p \leq 1$, always exists (this result was then extended to prove the existence of an alternate correlated Bernoulli random vector in Corollary \ref{cor:bivarfeasvariant}). Theorem \ref{thm:unionbound} then established that with pairwise independence, the Hunter \cite{hunter1976} and Worsley \cite{worsley1982} bound is tight for any $\mb{p} \in [0,1]^n$, which, to the best of our knowledge, has not been shown thus far in the literature dedicated to this topic. In fact, paraphrasing from Boros \cite{boros2014} (Section 1.2), ``As far as we know, in spite of the several studies dedicated to this problem, the complexity status of this problem, for feasible input, seems to be still open even for bivariate probabilities''. With pairwise independent random variables, feasibility is guaranteed and Theorem \ref{thm:unionbound} shows that the tightest upper bound is computable in polynomial time (in fact in a simple closed-form), thus providing a partial positive answer towards this question. The proof included the explicit construction of an extremal distribution (though not unique) in Table \ref{table:hwprobdist}, that attains this bound. We then showed in Proposition \ref{prop:25} that the ratio of the Boole union bound and the pairwise independent bound is upper bounded by $4/3$ and that this bound is attained. Applications of the result in correlation gap analysis and bottleneck optimization (in the distributionally robust optimization context) were discussed in examples \ref{ex:correlationanalysis} and \ref{ex:bottleneck}. The tight upper bound on the union probability was then used to derive a closed-form expression for the tight lower bound on the intersection probability in Corollary \ref{cor:intersection}, which, to the best of our knowledge, appears to be unknown in the literature. In Section \ref{sec:nonidenticalnewbounds}, for $k \geq 2$, we proposed new bounds exploiting ordering of the probabilities (which are at least as good as the unordered bounds) and argued that the ordered Boros and Pr\'{e}kopa bound must be at least as good as the other two ordered bounds proposed in Theorem \ref{thm:nonidentical}. To the best of our knowledge, this idea of ordering has not been exploited thus far to tighten probability bounds for pairwise independent random variables.
We then showed in Section \ref{subsec:furthertight} that the ordered bounds can be further tightened by using the tight bound for $k = 1$ from Theorem \ref{thm:unionbound}. Numerical examples in Section \ref{subsec:numericals_noniden} then demonstrated that while the Boros and Pr\'{e}kopa bound is uniformly the best performing of the three ordered bounds, the Schmidt, Siegel and Srinivasan bound shows the best improvement with ordering, in the examples considered.
Section \ref{sec:tightinstances} provided instances when the unordered and ordered bounds are tight.
In Section \ref{subsec:idenBP}, for the special case of identical probabilities $p \in [0,1]$ and any $k \in [n]$, we used a constructive proof exploiting the symmetry in the problem, to identify the best upper bound $\overline{P}(n,k,p)$ in closed-form and a corresponding extremal distribution. This result was further extended to provide tight bounds (not necessarily closed-form) for more general $t$-wise independent identical variables in Corollary \ref{cor:twiseiden}.  We then demonstrated the usefulness of this result by identifying instances when the existing unordered bounds are tight. Section \ref{subsec:almostidentight} demonstrated the usefulness of the ordered bounds by identifying an instance with $n-1$ identical probabilities (along with additional conditions on the identical probability and $k$), when the ordered bounds are tight.

\noindent We believe several interesting research questions arise from this work, two of which we list below:
\texitem{(a)} To the best of our knowledge, the computational complexity of evaluating (or approximating) the bound $\overline{P}(n,k,\mb{p})$ for general $n$, $k$ and $\mb{p} \in [0,1]^n$ is still unresolved. While we provide the answer in closed-form for $k = 1$, a natural question that arises is whether the tight bounds for general $k \geq 2$ with pairwise independent random variables are efficiently computable (or efficient to approximate)? We leave this for future research.
\texitem{(b)} The upper bound of $4/3$ in Section \ref{subsec:43bound} is derived for the ratio between the maximum probability for the union of arbitrarily dependent events and the probability of the union of pairwise independent events. We conjecture this upper bound is valid for the expected value of all non-decreasing, nonnegative submodular functions (of which the probability of the union is a special case) and leave it as an open question.

\section*{Acknowledgments}
We would like to thank the Associate Editor Prasad Tetali and the reviewers for their careful reading of the paper and useful inputs.

\bibliographystyle{siamplain}
\bibliography{references}

\begin{thebibliography}{10}

\bibitem{Agrawal2012}
{\sc S.~Agrawal, Y.~Ding, A.~Saberi, and Y.~Ye}, {\em Price of correlations in
  stochastic optimization}, Operations Research, 1 (2012), pp.~150--162.

\bibitem{Babai}
{\sc L.~Babai}, {\em Entropy versus pairwise independence}, Available at
  http://people.cs.uchicago.edu/$\sim$ laci/papers/13augEntropy.pdf,  (2013).

\bibitem{benjamini12}
{\sc I.~Benjamini, O.~Gurel-Gurevich, and R.~Peled}, {\em {On k-wise
  independent distributions and boolean functions}}, {Working Paper, In: arXiv
  preprint:1201.3261},  (2012).

\bibitem{bernstein1946}
{\sc S.~Bernstein}, {\em {Theory of probability, Moscow-Leningrad}},  (1946).

\bibitem{boole1854}
{\sc G.~Boole}, {\em {The Laws of Thought (1916 reprint)}}, 1854.

\bibitem{boros1989}
{\sc E.~Boros and A.~Pr\'{e}kopa}, {\em {Closed form two-sided bounds for
  probabilities that at least r and exactly r out of n events occur}},
  {Mathematics of Operations Research}, 14 (1989), pp.~317--342.

\bibitem{boros2014}
{\sc E.~Boros, A.~Scozzari, F.~Tardella, and P.~Veneziani}, {\em {Polynomially
  computable bounds for the probability of the union of events}}, {Mathematics
  of Operations Research}, 39 (2014), pp.~1311--1329.

\bibitem{Calinescu2}
{\sc G.~Calinescu, C.~Chekuri, M.~P\'{a}l, and J.~Vondr\'{a}k}, {\em Maximizing
  a monotone submodular function subject to a matroid constraint}, SIAM Journal
  on Computing, 40 (2007), pp.~1740--1766.

\bibitem{Rao}
{\sc N.~R. Chaganty and H.~Joe}, {\em Range of correlation matrices for
  dependent bernoulli random variables}, Biometrika, 1 (2006), pp.~197--206.

\bibitem{chebyshev1867}
{\sc P.~Chebyshev}, {\em {Des valeurs moyennes}}, {Journal de Mathématiques
  Pures et Appliquées}, 2 (1867), pp.~177--184.

\bibitem{chernoff}
{\sc H.~Chernoff}, {\em {A measure of asymptotic efficiency for tests of a
  hypothesis based on the sum of observations}}, {Annals of Mathematical
  Statistics}, 23 (1952), pp.~493--509.

\bibitem{dawson1967}
{\sc D.~A. Dawson and D.~Sankoff}, {\em {An inequality for probabilities}},
  {Proceedings of the American Mathematical Society}, 18 (1967), pp.~504--507.

\bibitem{decaen1997}
{\sc D.~de~Caen}, {\em {A lower bound on the probability of a union}},
  {Discrete Mathematics}, 169 (1997), pp.~217--220.

\bibitem{dohmen2007}
{\sc K.~Dohmen and P.~Tittmann}, {\em {Improved Bonferroni inequalities and
  binomially bounded functions}}, {Electronic Notes in Discrete Mathematics},
  28 (2007), pp.~91--93.

\bibitem{Edmonds}
{\sc J.~Edmonds and D.~R. Fulkerson}, {\em Bottleneck extrema}, Journal of
  Combinatorial Theory, 3 (1970), pp.~299--306.

\bibitem{Emrich}
{\sc L.~J. Emrich and M.~R. Piedmonte}, {\em A method for generating
  high-dimensional multivariate binary variates}, The American Statistician, 45
  (1991), pp.~302--304.

\bibitem{feller1959}
{\sc W.~Feller}, {\em {Non-Markovian processes with the semigroup property}},
  {The Annals of Mathematical Statistics}, 30 (1959), pp.~1252--1253.

\bibitem{feller}
{\sc W.~Feller}, {\em {An Introduction to Probability Theory and Its
  Applications: Volume I}}, Wiley Series in Probability and Mathematical
  Statistics, 3~ed., 1968.

\bibitem{frechet1935}
{\sc M.~Fr{\'e}chet}, {\em {G{\'e}n{\'e}ralisation du th{\'e}oreme des
  probabilit{\'e}s totales}}, {Fundamenta mMthematicae}, 1 (1935),
  pp.~379--387.

\bibitem{galambos1975}
{\sc J.~Galambos}, {\em {Methods for proving Bonferroni type inequalities}},
  {Journal of the London Mathematical Society}, 2 (1975), pp.~561--564.

\bibitem{galambos1977}
{\sc J.~Galambos}, {\em {Bonferroni inequalities}}, {The Annals of
  Probability},  (1977), pp.~577--581.

\bibitem{garnett}
{\sc B.~Garnett}, {\em {Small deviations of sums of independent random
  variables}}, {Journal of Combinatorial Theory, Series A}, 169 (2020),
  pp.~105--119.

\bibitem{Gavinsky}
{\sc D.~Gavinsky and P.~Pudl\'{a}k}, {\em On the joint entropy of
  d-wise-independent variables}, Commentationes Mathematicae Universitatis
  Carolinae,  (2016), pp.~333--343.

\bibitem{geisser1962}
{\sc S.~Geisser and N.~Mantel}, {\em {Pairwise independence of jointly
  dependent variables}}, {The Annals of Mathematical Statistics}, 33 (1962),
  pp.~290--291.

\bibitem{goemans2015}
{\sc M.~Goemans}, {\em {Chernoff bounds, and some applications}}, {Lecture
  Notes, MIT},  (2015).

\bibitem{hailperin}
{\sc T.~Hailperin}, {\em {Best possible inequalities for the probability of a
  logical function of events}}, {The American Mathematical Monthly}, 72 (1965),
  pp.~343--359.

\bibitem{hoeffding}
{\sc W.~Hoeffding}, {\em {Probability inequalities for sums of bounded random
  variables}}, {Journal of the American Statistical Association}, 58 (1963),
  pp.~13--30.

\bibitem{hunter1976}
{\sc D.~Hunter}, {\em {An upper bound for the probability of a union}},
  {Journal of Applied Probability}, 13 (1976), pp.~597--603.

\bibitem{joffe1974}
{\sc A.~Joffe}, {\em {On a set of almost deterministic $ k $-independent random
  variables}}, {The Annals of Probability}, 2 (1974), pp.~161--162.

\bibitem{karloffmansour1994}
{\sc H.~Karloff and Y.~Mansour}, {\em {On construction of k-wise independent
  random variables}}, in {Proceedings of the 26th Annual ACM Symposium on
  Theory of Computing}, 1994, pp.~564--573.

\bibitem{koller}
{\sc D.~Koller and N.~Meggido}, {\em {Construcing small sample spaces
  satisfying given constraints}}, {SIAM Journal on Discrete Mathematics}, 7
  (1994), pp.~260--274.

\bibitem{kounias1968}
{\sc E.~G. Kounias}, {\em {Bounds for the probability of a union, with
  applications}}, {The Annals of Mathematical Statistics}, 39 (1968),
  pp.~2154--2158.

\bibitem{kounias1976}
{\sc S.~Kounias and J.~Marin}, {\em {Best linear Bonferroni bounds}}, {SIAM
  Journal on Applied Mathematics}, 30 (1976), pp.~307--323.

\bibitem{tahakara2000}
{\sc H.~Kuai, F.~Alajaji, and G.~Takahara}, {\em {A lower bound on the
  probability of a finite union of events}}, {Discrete Mathematics}, 215
  (2000), pp.~147--158.

\bibitem{kwerelstringent1975}
{\sc S.~M. Kwerel}, {\em {Most stringent bounds on aggregated probabilities of
  partially specified dependent probability systems}}, {Journal of the American
  Statistical Association}, 70 (1975b), pp.~472--479.

\bibitem{lancaster1965}
{\sc H.~O. Lancaster}, {\em {Pairwise statistical independence}}, {The Annals
  of Mathematical Statistics}, 36 (1965), pp.~1313--1317.

\bibitem{luby}
{\sc M.~Luby and A.~Widgerson}, {\em {Pairwise independence and
  derandomization}}, { Foundations and Trends in Theoretical Computer Science},
  1 (2005), pp.~239--201.

\bibitem{Lunn}
{\sc A.~D. Lunn and S.~J. Davies}, {\em A note on generating correlated binary
  variables}, Biometrika, 85 (1998), pp.~487--490.

\bibitem{maurer}
{\sc W.~Maurer}, {\em {Bivalent trees and forests or upper bounds for the
  probability of a union revisited}}, {Discrete Applied Mathematics}, 6 (1983),
  pp.~157--171.

\bibitem{mori1985}
{\sc T.~F. M{\'o}ri and J.~G. Sz{\'e}kely}, {\em {A note on the background of
  several Bonferroni--Galambos-type inequalities}}, {Journal of Applied
  Probability}, 22 (1985), pp.~836--843.

\bibitem{brien1980}
{\sc G.~L. O'Brien}, {\em {Pairwise independent random variables}}, {The Annals
  of Probability}, 8 (1980), pp.~170--175.

\bibitem{passbrendan2018}
{\sc B.~Pass and S.~Spektor}, {\em {On Khintchine type inequalities for k-wise
  independent Rademacher random variables}}, {Statistics \& Probability
  Letters}, 132 (2018), pp.~35--39.

\bibitem{peled}
{\sc R.~Peled, A.~Yadin, and A.~Yehudayoff}, {\em {The maximal probability that
  k-wise independent bits are all 1}}, {Random Structures \& Algorithms}, 38
  (2011), pp.~502--525.

\bibitem{pinelis2021exact}
{\sc I.~Pinelis}, {\em Exact lower bound on an ‘exactly one’probability},
  Bulletin of the Australian Mathematical Society, 104 (2021), pp.~330--336.

\bibitem{pitowsky91}
{\sc I.~Pitowsky}, {\em Correlation polytopes: Their geometry and complexity},
  Mathematical Programming, 50 (1991), pp.~395--414.

\bibitem{platz1985}
{\sc O.~Platz}, {\em {A sharp upper probability bound for the occurrence of at
  least m out of n events}}, {Journal of Applied Probability}, 22 (1985),
  pp.~978--981.

\bibitem{prekopa1988}
{\sc A.~Pr{\'e}kopa}, {\em {Boole-Bonferroni inequalities and linear
  programming}}, {Operations Research}, 36 (1988), pp.~145--162.

\bibitem{prekopa1990}
{\sc A.~Pr{\'e}kopa}, {\em {Sharp bounds on probabilities using linear
  programming}}, {Operations Research}, 38 (1990), pp.~227--239.

\bibitem{prekopa2005}
{\sc A.~Pr{\'e}kopa and L.~Gao}, {\em {Bounding the probability of the union of
  events by aggregation and disaggregation in linear programs}}, {Discrete
  Applied Mathematics}, 145 (2005), pp.~444--454.

\bibitem{Qaqish}
{\sc B.~F. Qaqish}, {\em A family of multivariate binary distributions for
  simulating correlated binary variables with specified marginal means and
  correlations}, Biometrika, 90 (2003), pp.~455--463.

\bibitem{qiu2016}
{\sc F.~Qiu, S.~Ahmed, and S.~S. Dey}, {\em {Strengthened bounds for the
  probability of k-out-of-n events}}, {Discrete Applied Mathematics}, 198
  (2016), pp.~232--240.

\bibitem{ruger1978}
{\sc B.~R\"{u}ger}, {\em {Das maximale signifikanzniveau des Tests: ``Lehne
  $H_0$ ab, wennk untern gegebenen tests zur ablehnung f\"{u}hren''}},
  {Metrika}, 25 (1978), pp.~171--178.

\bibitem{sathe1980}
{\sc Y.~S. Sathe, M.~Pradhan, and S.~P. Shah}, {\em {Inequalities for the
  probability of the occurrence of at least m out of n events}}, {Journal of
  Applied Probability}, 17 (1980), pp.~1127--1132.

\bibitem{shrinivasan1995}
{\sc J.~Schmidt, A.~Siegel, and A.~Srinivasan}, {\em {Chernoff--Hoeffding
  bounds for applications with limited independence}}, {SIAM Journal on
  Discrete Mathematics}, 8 (1995), pp.~223--250.

\bibitem{tao}
{\sc T.~Tao}, {\em {Topics in random matrix theory}}, vol.~132, Graduate
  Studies in Mathematics, American Mathematical Society, 2012.

\bibitem{veneziani2008union}
{\sc P.~Veneziani}, {\em {Graph-based upper bounds for the probability of the
  union of events}}, {The Electronic Journal of Combinatorics}, 15 (2008).

\bibitem{veneziani2008hunter}
{\sc P.~Veneziani}, {\em {Optimality conditions for Hunter's bound}}, {Discrete
  Mathematics}, 308 (2008), pp.~6009--6014.

\bibitem{vizvari2007}
{\sc B.~Vizv{\'a}ri}, {\em {New upper bounds on the probability of events based
  on graph structures}}, {Mathematical Inequalities and Applications}, 10
  (2007), p.~217.

\bibitem{worsley1982}
{\sc K.~J. Worsley}, {\em {An improved Bonferroni inequality and
  applications}}, {Biometrika}, 69 (1982), pp.~297--302.

\bibitem{Xie}
{\sc W.~Xie, J.~Zhang, and S.~Ahmed}, {\em Distributionally robust bottleneck
  combinatorial problems: uncertainty quantification and robust decision
  making}, To appear in Mathematical Programming,  (2021).

\bibitem{yang2016}
{\sc J.~Yang, F.~Alajaji, and G.~Takahara}, {\em {Lower bounds on the
  probability of a finite union of events}}, {SIAM Journal on Discrete
  Mathematics}, 30 (2016), pp.~1437--1452.

\bibitem{yoda2016}
{\sc K.~Yoda and A.~Pr{\'e}kopa}, {\em {Improved bounds on the probability of
  the union of events some of whose intersections are empty}}, {Operations
  Research Letters}, 44 (2016), pp.~39--43.

\end{thebibliography}
\end{document}